\setlist{leftmargin=*}
\newcommand{\mathsc}[1]{{\normalfont\textsc{#1}}}
\newcommand{\emptySet}{\cdot}
\newcommand{\seqsArrow}{\vdash}
\newcommand{\seqs}[2]{
  \ifthenelse{\isempty{#1}}{\emptySet}{#1} 
  \seqsArrow
  \ifthenelse{\isempty{#2}}{\emptySet}{#2} 
}
\newcommand{\Lhs}[1]{\mathrm{Lhs}(#1)} 
\newcommand{\Rhs}[1]{\mathrm{Rhs}(#1)} 
\newcommand{\IPL}{\mathrm{IPL}\xspace}
\newcommand{\FRJof}[1]{\mathbf{FRJ}(#1)\xspace} 
\newcommand{\GBUof}[1]{\mathbf{Gbu}(#1)\xspace} 
\newcommand{\seqfrjArrow}{\Rightarrow}
\newcommand{\seqfrj}[2]{
  \ifthenelse{\isempty{#1}}{\emptySet}{#1} 
  \seqfrjArrow
  \ifthenelse{\isempty{#2}}{\emptySet}{#2} 
}
\newcommand{\tseqfrj}[2]{
  \ifthenelse{\isempty{#1}}{\emptySet}{#1} 
  &\;\seqfrjArrow\; 
  \ifthenelse{\isempty{#2}}{\emptySet}{#2} 
}
\newcommand{\seqfrjiArrow}{\rightarrow}
\newcommand{\seqfrji}[3]{
  \ifthenelse{\isempty{#1}}{\emptySet}{#1} 
  \,;\,
  \ifthenelse{\isempty{#2}}{\emptySet}{#2} 
  \seqfrjiArrow  
  \ifthenelse{\isempty{#3}}{\emptySet}{#3} 
}
\newcommand{\tseqfrji}[3]{
  \ifthenelse{\isempty{#1}}{\emptySet}{#1} 
  \:;\:
  \ifthenelse{\isempty{#2}}{\emptySet}{#2} 
  & \; \seqfrjiArrow\;  
  \ifthenelse{\isempty{#3}}{\emptySet}{#3} 
}
\newcommand{\ruleAXI}{\mathrm{Ax}_\rightarrow\xspace}
\newcommand{\ruleAXR}{\mathrm{Ax}_\Rightarrow\xspace}
\newcommand{\ruleJOIN}{\Join\xspace}
\newcommand{\ruleJOINA}{\Join^{\mathrm{At}}\xspace}
\newcommand{\ruleJOINO}{\Join^\lor\xspace}
\newcommand{\ruleIMPi}{\imp_\in\xspace}
\newcommand{\ruleIMPni}{\imp_{\not\in}\xspace}
\newcommand{\seqgbuArrow}{\Rightarrow_g}
\newcommand{\seqgbu}[2]{
\ifthenelse{\isempty{#1}}{\emptySet}{#1} 
\seqgbuArrow
\ifthenelse{\isempty{#2}}{\emptySet}{#2} 
}
\newcommand{\seqgbuiArrow}{\rightarrow_g}
\newcommand{\seqgbui}[2]{
\ifthenelse{\isempty{#1}}{\emptySet}{#1} 
\seqgbuiArrow
\ifthenelse{\isempty{#2}}{\emptySet}{#2} 
}
\newcommand{\ruleAXID}{\mathrm{Ax}\xspace}
\newcommand{\ruleIMPRi}{\imp R_\in\xspace}
\newcommand{\ruleIMPRni}{\imp R_{\not\in}\xspace}
\newcommand{\imp}{\supset}
\newcommand{\proves}[2]{\vdash_{#1} #2}
\newcommand{\nproves}[2]{{\not\vdash}_{#1} #2}
\newcommand{\K}{\Kcal}
\newcommand{\forcing}{\Vdash}
\newcommand{\forcings}{\Vdash^*}
\newcommand{\nforcing}{\nVdash}
\newcommand{\Modname}{\mathrm{Mod}}
\newcommand{\Mod}[1]{\Modname(#1)}
\newcommand{\sigmareg}[2]{\s^{\seqfrjArrow}_{#1}(#2)}
\newcommand{\sigmairr}[2]{\s^{\seqfrjiArrow}_{#1}(#2)}
\newcommand{\genericArrow}{\odot}
\newcommand{\Bcal}{\mathcal{B}}
\newcommand{\Ccal}{\mathcal{C}}
\newcommand{\Dcal}{\mathcal{D}}
\newcommand{\Kcal}{\mathcal{K}}
\newcommand{\Ical}{\mathcal{I}}
\newcommand{\Jcal}{\mathcal{J}}
\newcommand{\Lcal}{\mathcal{L}}
\newcommand{\Rcal}{\mathcal{R}}
\newcommand{\Tcal}{\mathcal{T}}
\renewcommand{\a}{\alpha}
\renewcommand{\b}{\beta}
\newcommand{\g}{\gamma}
\renewcommand{\d}{\delta}
\newcommand{\s}{\sigma}
\newcommand{\D}{\Delta}
\newcommand{\G}{\Gamma}
\newcommand{\EndEs}{\mbox{}~\hfill$\Diamond$}
\newcommand{\stru}[1]{\langle #1 \rangle} 
\newcommand{\wgname}{\mathrm{wg}}
\newcommand{\wg}[1]{\wgname(#1)}
\newcommand{\tp}[1]{\mathrm{tp}(#1)}
\newcommand{\tpm}[1]{\mathrm{tp}^-(#1)}
\newcommand{\card}[1]{||#1||}
\newcommand{\size}[1]{| #1|}
\newcommand{\PV}{\mbox{$\mathcal{V}$}\xspace} 
\newcommand{\Prime}{\PV^\bot\xspace}  
\newcommand{\Sf}[1]{\mathrm{Sf}(#1)}
\newcommand{\Sfm}[1]{\mathrm{Sf}^-(#1)}
\newcommand{\Sflname}{\mathsc{Sl}}
\newcommand{\Sfl}[1]{\Sflname(#1)}
\newcommand{\Sfrname}{\mathsc{Sr}}
\newcommand{\Sfr}[1]{\Sfrname(#1)}
\newcommand{\Sfgname}{\mathsc{Sx}}
\newcommand{\Sfg}[1]{\Sfgname(#1)}
\newcommand{\Gat}{\G^{\mathrm{At}}}
\newcommand{\Dat}{\D^{\mathrm{At}}}
\newcommand{\Gimp}{\G^\imp}
\newcommand{\bG}{{\overline\G}}
\newcommand{\bGat}{{\overline\G}^{\mathrm{At}}}
\newcommand{\bGimp}{{\overline\G}^\imp}
\newcommand{\That}{\Theta^{\mathrm{At}}}
\newcommand{\Thimp}{\Theta^\imp}
\newcommand{\Sigat}{\Sigma^{\mathrm{At}}}
\newcommand{\Sigimp}{\Sigma^\imp}
\newcommand{\Lambdas}{\Lambda^*}
\newcommand{\Lambdasi}{\Lambda^{*_\imp}}
\newcommand{\Fmimp}{\Lcal^\imp}
\newcommand{\Clo}[1]{\Ccal l(#1)}
\newcommand{\Restr}[2]{#1/#2}
\newcommand{\PS}[1]{\mathrm{P}(#1)}
\newcommand{\mapstoz}{\mapsto_0}
\newcommand{\mapstorz}[1]{\stackrel{#1}{\mapsto_0}}
\newcommand{\mapstos}{\mapsto_*}
\newcommand{\mapstois}{\mapsto^\mathtt{Ir}_*}
\newcommand{\frj}{\texttt{frj}\xspace}
\newcommand{\lsj}{\texttt{lsj}\xspace}
\newcommand{\bddint}{\texttt{BDDIntKt}\xspace}
\newcommand{\eval}{\rhd}
\newcommand{\neval}{\mbox{$\,\not\rhd\,$}}
\newcommand{\DBof}[1]{\mathtt{D}_{#1}}
\newcommand{\DBMof}[1]{\mathtt{D}^{\ast}_{#1}}
\newcommand{\Omat}{\Omega^{\mathrm{At}}}
\newcommand{\Omimp}{\Omega^\imp}
\newcommand{\wggbuname}{\mathrm{Wg}}
\newcommand{\wggbu}[1]{\wggbuname(#1)}
\newcommand{\Search}{\mbox{\sc BSearch}\xspace}
\newcommand{\GJ}{\mathbf{G3i}\xspace}
\newcommand{\Rn}[1]{\mathrm{Rn}(#1)}  
\newcommand{\derfrjg}[3]{\Dcal^{#1}_{#2}(#3)}
\newcommand{\derfrjr}[2]{\derfrjg{\seqfrjArrow}{#1}{#2}}
\newcommand{\derfrji}[2]{\derfrjg{\seqfrjiArrow}{#1}{#2}}
\newcommand{\height}[1]{\mathrm{h}(#1)}
\newcommand{\Lambdasa}{\Lambda^{*_\mathrm{At}}}
\newcommand{\ass}{\;\leftarrow\;}
\newcommand{\leqnomode}{\tagsleft@true}
\newcommand{\reqnomode}{\tagsleft@false}
\title{Duality between unprovability and provability in forward
  proof-search for Intuitionistic Propositional Logic}
\author{Camillo Fiorentini\inst{1}, Mauro Ferrari\inst{2}}
\institute{DI, Univ. degli Studi di Milano, Via Comelico, 39, 20135
  Milano, Italy \and DiSTA, Univ. degli Studi dell'Insubria, Via
  Mazzini, 5, 21100, Varese, Italy }
\begin{document}




\maketitle

\begin{abstract} 
  The inverse method is a saturation based theorem proving technique;
  it relies on a forward proof-search strategy and can be applied to
  cut-free calculi enjoying the subformula property.  Here we apply
  this method to derive the unprovability of a goal formula $G$ in
  Intuitionistic Propositional Logic. To this aim we design a forward
  calculus $\FRJof{G}$ for Intuitionistic unprovability which is prone
  to constructively ascertain the unprovability of a formula $G$ by
  providing a concise countermodel for it; in particular we prove that
  the generated countermodels have minimal height. Moreover, we
  clarify the role of the saturated database obtained as result of a
  failed proof-search in $\FRJof{G}$ by showing how to extract from
  such a database a derivation witnessing the Intuitionistic validity
  of the goal.
\end{abstract}



\section{Introduction}

The inverse method, introduced in the 1960s by
Maslov~\cite{Maslov:67}, is a saturation based theorem proving
technique closely related to (hyper)resolution~\cite{VoronkovHAR:01};
it relies on a forward proof-search strategy and can be applied to
cut-free calculi enjoying the subformula property.  Given a goal, a
set of instances of the rules of the calculus at hand is selected;
such specialized rules are repeatedly applied in the forward
direction, starting from the axioms (i.e., the rules without
premises).  Proof-search terminates if either the goal is obtained or
the database of proved facts saturates (no new fact can be added).  As
pointed out by Vladimir Lifschitz~\cite{Lifschitz89}, \emph{``the role of
the inverse method in the Soviet work on proof procedures for
predicate logic can be compared to the role of resolution method in
theorem proving projects in the West''}.  But, he regrets, 
\emph{``for a number of reasons, this work has not been duly appreciated outside a
small circle of Maslov's associates''}.  The method has been
popularized by Degtyarev and Voronkov~\cite{VoronkovHAR:01}, who
provide the general recipe to design forward calculi, with
applications to Classical Predicate Logic and some non-classical
logics.  Further extensions can be found
in~\cite{ChaudB:Tabl15,DonnellyGKMP:04,KovacsMV:13}.  A significant
investigation is presented in~\cite{ChaudP:CADE05,ChaudPPIJCAR:06},
where focused calculi and polarization of formulas are exploited to
reduce the search space in forward proof-search for Intuitionistic
Logic.  These techniques are at the heart of the design of the prover
Imogen~\cite{McLPfe:2008}.

In all the mentioned papers, the inverse method has been exploited to
prove the validity of a goal in a specific logic.  Here we follow the
dual approach, namely: we design a forward calculus to derive the
unprovability of a goal formula in Intuitionistic Propositional Logic
($\IPL$).  Our motivation is twofold. Firstly, we aim to define a
calculus which is prone to constructively ascertain the unprovability
of a formula by providing a concise countermodel for it. The second
motivation is to clarify the role of the saturated database obtained as result
of a failed proof-search.  In the case of the usual forward calculi
for Intuitionistic provability, if proof-search fails, a saturated
database is generated which \emph{``may be considered a kind of
  countermodel for the goal sequent''} \cite{McLPfe:2008}.  However,
as far as we know, no method has been proposed to effectively extract
it.  Actually, the main problem comes from the high level of
non-determinism involved in the construction of countermodels. Here,
assuming the dual approach, the saturated database generated by a
failed proof-search can be considered as \emph{``a kind of proof of the
  goal''}; we give evidence of this by showing how to extract from
such a database a derivation  witnessing the Intuitionistic validity
of the goal.

Our different perspectives requires a deep adjustment of the inverse
method itself.  Sequents $\seqs{\G}{C}$ of standard forward calculi
encode the fact that the right formula $C$ is provable from the set of
left formulas $\G$ in the understood logic.  In our approach, a
sequent $\seqfrj{\G}{C}$ signifies the unprovability of $C$ from $\G$
in $\IPL$.  From a semantic viewpoint, this means that, in some world
of a Kripke model, all the formulas in $\G$ are forced and $C$ is not
forced.  In standard forward reasoning, axioms have the form
$\seqs{p}{p}$, where $p$ is a propositional variable.  In our
approach, axioms have the form $\seqfrj{\Gat}{p}$ or
$\seqfrj{\Gat}{\bot}$, where $\Gat$ is a set of propositional
variables and $p$ is a propositional variable not belonging to $\Gat$.
Rules must preserve (top-down) unprovability.  Examples of sound rules
for unprovability are:
\[
\AXC{$\seqfrj{\G}{A}$}
\RightLabel{$R \land$}
\UIC{$\seqfrj{\G}{A\land B}$}
\DP
\qquad
\AXC{$\seqfrj{A,\G}{C}$}
\RightLabel{$L \lor$}
\UIC{$\seqfrj{A\lor B,\G}{C}$}
\DP
\]
The former rule states that if $A$ is not provable from $\G$, then
$A\land B$ is not provable from $\G$.  The latter corresponds to the
contrapositive of Inversion Principle for left $\lor$: if $C$ is not
provable from $\{A\}\cup\G$, then $C$ is not provable from $\{A\lor
B\}\cup\G$.  The tricky point is how to cope with rules having more
than one premise.  In direct forward calculi, left formulas must be
gathered.  For instance, the rule for right $\land$
\[
\AXC{$\seqs{\G_1}{A}$}
\AXC{$\seqs{\G_2}{B}$}
\RightLabel{$R \land$}
\BIC{$\seqs{\G_1,\G_2}{A\land B}$}
\DP
\]
encodes the property that if $A$ is provable from $\G_1$ and $B$ is
provable from $\G_2$, then $A\land B$ is provable from $\G_1\cup\G_2$.
Apparently, in the forward refutation calculus we should follow the
dual approach and intersect left formulas.  Thus, the rule $R\lor$
should be
\[
\AXC{$\seqfrj{\G_1}{A}$}
\AXC{$\seqfrj{\G_2}{B}$}
    \RightLabel{$R\lor$}
    \BIC{$\seqfrj{\G_1\cap\G_2}{A\lor B}$}
    \DP
\]
to be interpreted as ``if $A$ is not provable from $\G_1$ and
$B$ is not provable from $\G_2$, then
  $A\lor B$   is not provable from $\G_1\cap\G_2$''.
But the alleged rule  $R\lor$ does not preserve unprovability,
  as shown by this  trivial counterexample:
\[
\AXC{$\seqfrj{q_2,\,p,\, H}{q_1}$}
\AXC{$\seqfrj{q_1,\,p,\,H }{q_2}$}
\RightLabel{$R \lor$}
\BIC{$\seqfrj{p,H}{q_1\lor q_2}$}
\DP
\qquad H\;=\;p\imp q_1\lor q_2
\]
Here $q_1$ is not provable from $\G_1=\{q_2,p,H\}$ and $q_2$ is not
provable from $\G_2=\{q_1,p,H\}$, but the right formula of the
conclusion $q_1\lor q_2$ is provable from $\G_1\cap\G_2=\{p,H\}$.  The
drawback is that the conclusion cannot retain both $p$ and $H$ in
left. To get a sound rule, we have to select a suitable subset
$\G_0$ of $\G_1\cap\G_2$: the possible choices are $\G_0=\{p\}$ or
$\G_0=\{H\}$ or $\G_0=\emptyset$.  Thus, we need a more clever
strategy to join sequents and to treat left formulas in multi-premise
rules.  In addition to the sequents mentioned so far, we call
\emph{regular} sequents, we introduce \emph{irregular} sequents of the
kind $\seqfrji{\Sigma}{\Theta}{C}$, where the left formulas are
partitioned into two sets $\Sigma$ and $\Theta$; in forward
proof-search, formulas in the sets $\Sigma$ must be kept as much as
possible.

The definition of the rules of a forward calculus depends on the
formula to be proved (the goal formula).  The calculus we define is
parametrized by the goal formula $G$ (where the goal is to prove that
$G$ is not valid in $\IPL$).  We call the related calculus $\FRJof{G}$
(Forward Refutation calculus for $\IPL$ parametrized by $G$); formulas occurring
in the sequents of $\FRJof{G}$ are suitable subformulas of $G$.  The
rules of the calculus are shown in Fig.~\ref{fig:FRJ} and discussed in
Sect.~\ref{sec:FRJ}.  We point out that, differently from standard
sequent calculi, left-hand sides of sequents only host propositional
variables and implicative formulas $A\imp B$; moreover $\FRJof{G}$
only supplies right rules.  In Sec.~\ref{sec:proofsearch} we define a
forward proof-search procedure to build an $\FRJof{G}$-derivation of a
goal formula $G$, namely an $\FRJof{G}$-derivation of a regular
sequent of the form $\seqfrj{\G}{G}$.  This is a standard saturation
procedure where the provable sequents of $\FRJof{G}$ are collected
step-by-step in a database $\DBof{G}$.  Initially $\DBof{G}$ contains
all the axioms of $\FRJof{G}$; then a loop is entered where the rules
of the calculus are repeatedly applied (in the forward direction) to
the sequents in $\DBof{G}$.  The loop ends when either $G$ is proved
or no new sequent can be added to $\DBof{G}$; since the number of
sequents of $\FRJof{G}$ is bounded, the process eventually ends.  To
avoid redundancies and maintain $\DBof{G}$ compact, we introduce a
\emph{subsumption relation} between sequents; for instance, if at some
step $\s$ is proved and $\s$ is subsumed by a sequent already in
$\DBof{G}$, then $\s$ is discarded and not added to $\DBof{G}$
(\emph{forward subsumption}).

The rules of $\FRJof{G}$ are inspired by Kripke semantics.  In
Sec.~\ref{sec:models} we show that, from a derivation of $G$, we can
extract a countermodel for $G$, namely a Kripke model such that, at
its root, the formula $G$ is not forced, witnessing that $G$ is not
valid in $\IPL$~\cite{ChaZak:97}. Actually, there is a close
correspondence between a derivation and the related Kripke model.
Thus, our forward proof-search procedure can be understood as a
top-down method to build a countermodel for $G$, starting from the
final worlds down to the root. This original approach is dual to the
standard one, where countermodels are built bottom-up, mimicking the
backward application of rules
(see.e.g.,~\cite{AvelloneFM:15,FerFioFio:2010lpar,FerFioFio:2013,FerFioFio:2015tocl,GorPos:2010,Negri:14,PinDyc:95}).
This different viewpoint has a significant impact in the outcome.
Indeed, the countermodels generated by a backward procedure are always
trees, which might contain some redundancies.  Instead, forward
methods are prone to re-use sequents and to not replicate them; thus
the generated models do not contain duplications and are in general
very concise (see the compact models in Figs.~\ref{fig:countAST}
and~\ref{fig:N17}).  In Sec.~\ref{sec:FRJ} we show that
$\FRJof{G}$-derivations have height quadratic in $\size{G}$ (the size
of $G$).  Moreover, if $G$ is not valid, the countermodel extracted
from an $\FRJof{G}$-derivation of $G$ has height at most $\size{G}$.

The relationship between a non valid formula $G$ and the height of the
countermodel extracted from an $\FRJof{G}$-derivation of $G$ is deeply
investigated in Sec.~\ref{sec:minimality}.  Here we show that, given a
countermodel for $G$ of height $h$, we can build an
$\FRJof{G}$-derivation of $G$ having height at most $h$.  By this
fact, we conclude that, if $G$ is not valid in $\IPL$, we can build an
$\FRJof{G}$-derivation of $G$ such that the height $h$ of the
extracted countermodel is minimal (namely, there exists no
countermodel for $G$ having height less than $h$).  We can tweak the
proof-search procedure so that, if $G$ is non valid, it yields an
$\FRJof{G}$-derivation of $G$ such that the extracted countermodel has
minimal height.

If the formula $G$ is valid in $\IPL$,  proof-search for $G$ fails
(indeed, no $\FRJof{G}$-derivation of $G$ can be built) and we
eventually get a \emph{saturated database} $\DBof{G}$ for $G$.  This
means that for every sequent $\s$ provable in $\FRJof{G}$, $\DBof{G}$
contains a sequent $\s'$ which subsumes $\s$; thus $\DBof{G}$ is in
some sense representative of all the sequents provable in $\FRJof{G}$.
We can exploit $\DBof{G}$ to build a derivation of $G$ in a sequent
calculus for $\IPL$.  To this aim, in Sec.~\ref{sec:gbu} we introduce
the sequent calculus $\GBUof{G}$ (see Fig.~\ref{fig:GBU}), a variant
of the well-known sequent calculus $\GJ$~\cite{TroSch:00}.  From a
$\GBUof{G}$-derivation of $G$, we can immediately obtain a
$\GJ$-derivation of $G$.  Differently from $\GJ$, backward
proof-search in $\GBUof{G}$ always terminate; indeed, we can define a
weight function on sequents such that, after the backward application
of a rule of $\GBUof{G}$ to a sequent, the weight of the sequents
decreases.  Nonetheless, backward-proof search in $\GBUof{G}$ might
present several backtrack points, in correspondence of the
applications of rules for left implication and right disjunction.  The
crucial point is that we can remove backtracking by querying the
database $\DBof{G}$: in presence of multiple non-deterministic
choices, we exploit $\DBof{G}$ to select the right way so to
successfully continue proof-search.  Thus, we can consider $\DBof{G}$
as a \emph{proof-certificate} of the validity of $G$, in the sense
that it contain enough information to reconstruct a  derivation
of $G$ in the sequent calculus $\GJ$.  If we eliminate from $\DBof{G}$ all the redundancies (if $\s$
belongs to $\DBof{G}$, then remove from $\DBof{G}$ all the sequents
subsumed by $\s$), then we get a saturated database $\DBMof{G}$ which
is the \emph{minimum} among the saturated databases of $G$, hence we
can consider $\DBMof{G}$ as the canonical proof-certificate of the
validity of $G$.  To get the minimum saturated database, we have to
enhance the proof-search procedure by implementing \emph{backward
  subsumption}.  

To evaluate the potential of our approach we have implemented $\frj$,
a Java prototype of our proof-search procedure based on the JTabWb
framework~\cite{FerFioFio:2017}\footnote{\frj is available at
  \url{http://github.com/ferram/jtabwb_provers/}.}. \frj implements
term-indexing, forward and backward subsumption and it allows the user
to generate the rendering of proofs and of the extracted
countermodels.

\section{Preliminaries}
\label{sec:prel}

We consider the propositional language $\Lcal$ based on a denumerable
set of propositional variables $\PV$, the connectives $\land$, $\lor$,
$\imp$ (as usual, $\land$ and $\lor$ bind stronger than $\imp$) and
the logical constant $\bot$; $\neg A$ is a shorthand for $A\imp \bot$.
If $\odot$ is a logical connective, we call  $\odot$-formula  a formula
with top-level connective $\odot$.
By $\Prime$ we denote the set $\PV\cup\{\bot\}$ and by $\Fmimp$ the
set of $\imp$-formulas  of $\Lcal$.  Capital Greek
letters $\G$, $\Sigma$, \dots denote sets of formulas; we use
notations like $\Gat$ and $\Gimp$ to mean that $\Gat\subseteq\PV$ and
$\Gimp\subseteq\Fmimp$.  Given a formula $G$, $\Sf{G}$ is the set of
all subformulas of $G$ (including $G$ itself).
By $\Sfl{G}$ and $\Sfr{G}$ we denote
the subsets of \emph{left} and \emph{right}
subformulas of $G$ (a.k.a.~negative/positive subformulas of $G$~\cite{TroSch:00}).
Formally, 
$\Sfl{G}$ and $\Sfr{G}$ are 
the smallest subsets of $\Sf{G}$ such that: 

\begin{itemize}
\item  $G\in\Sfr{G}$; 

\item $A\odot B\in \Sfg{G}$ implies $\{A,B\}\subseteq \Sfg{G}$, 
where   $\odot\in\{\land,\lor\}$ and 
$\Sfgname\in\{\Sflname,\Sfrname\}$;

\item $A\imp B\in \Sfl{G}$ implies $B\in \Sfl{G}$ and $A\in\Sfr{G}$;

\item $A\imp B\in \Sfr{G}$ implies $B\in \Sfr{G}$ and $A\in\Sfl{G}$.
\end{itemize}

\noindent
By $\size{A}$ we denote the \emph{size} of $A$, namely the number of
symbols in $A$.  A {\em Kripke model} is a structure
$\K=\stru{P,\leq,\rho, V}$, where $\stru{P,\leq}$ is a finite poset
with minimum $\rho$ (the \emph{root} of $\K$) and
$V:P\to 2^{\mathcal{V}}$ is a function such that $\alpha\leq \beta$
implies $V(\alpha)\subseteq V(\beta)$.  The \emph{forcing relation}
$\forcing\subseteq P\times\Lcal$ 
is defined as follows:
\begin{itemize}[leftmargin=*]
\item $\K,\a\nforcing \bot$;
\item for every $p\in\PV$, $\K,\a\forcing p$ iff $p\in V(\alpha)$;
\item $\K,\a\forcing A\land B$ iff $\K,\a\forcing A$ and $\K,\a\forcing B$;
\item $\K,\a\forcing A\lor B$ iff $\K,\a\forcing A$ or $\K,\a\forcing B$;
\item $\K,\a\forcing A\imp B$ iff, for every $\b\in P$ such that $\a\leq\b$,
  $\K,\b\nforcing A$ or $\K,\b\forcing B$.
\end{itemize}
\emph{Monotonicity property} holds for arbitrary formulas, i.e.:
$\K,\a\forcing A$ and $\a\leq \b$ imply $\K,\b\forcing A$.  A formula
$A$ is \emph{valid} in $\K$ iff $\K,\rho\forcing A$; we say that $A$
is \emph{valid} iff $A$ is valid in all the Kripke models;
Intuitionistic Propositional Logic $\IPL$ coincides with the set of
valid formulas~\cite{ChaZak:97}.  If $\K,\rho\nforcing A$, we say that
$\K$ is a \emph{countermodel} for $A$.  A \emph{final} world $\g$ of
$\K$ is a maximal world in $\stru{P,\leq}$; for every classically
valid formula $A$, we have $\K,\g\forcing A$.  Let $\G$ be a set of
formulas, by $\K,\a\forcing \G$ we mean that $\K,\a\forcing A$ for
every $A\in\G$.  Using the above notation we avoid to mention the
model $\K$ whenever it is understood (e.g., we write $\a\forcing A$
instead of $\K,\a\forcing A$); moreover, by ``model'' we mean ``Kripke
model''.

The \emph{closure} of $\G$, denoted by $\Clo{\G}$, is the smallest set
containing the formulas $X$ defined by the following grammar:
\[
X\;::=\;C~|~X\land X~|~A\lor X~|X\lor A~|~A\imp X
\qquad\mbox{$C\in\G$, $A$ any formula}
\]
 
\noindent
The following properties of closures can be easily proved:
\begin{enumerate}[label=($\Ccal l$\arabic*),ref=($\Ccal l$\arabic*)]
\item\label{propClo:1} $\K,\a\forcing \G$ implies $\K,\a\forcing
  \Clo{\G}$.

\item\label{propClo:2} $A\in\Clo{\G}$ implies
  $A\in\Clo{\,\G\cap\Sf{A}\,}$.

\item\label{propClo:3} $\G\subseteq\Clo{\G}$ and
  $\Clo{\,\Clo{\G}\,}=\Clo{\G}$.

\item\label{propClo:4} $\G_1\subseteq\G_2$ implies
  $\Clo{\G_1}\subseteq\Clo{\G_2}$.

\item\label{propClo:5} $\Clo{\G}\cap\PV\;=\;\G\cap\PV$.

\item\label{propClo:6} $\G_1\subseteq\Clo{\G_2}$ implies
  $\Clo{\G_1}\subseteq\Clo{\G_2}$ (this follows from~\ref{propClo:3}
  and~\ref{propClo:4}).
\end{enumerate}

\section{The calculus $\FRJof{G}$}
\label{sec:FRJ}

The Forward Refutation calculus $\FRJof{G}$ is a calculus to infer the
unprovability of a formula $G$ (the \emph{goal formula}) in $\IPL$ and
it is designed to support forward proof-search.  The calculus acts on
$\FRJof{G}$-sequents that depend on the subformulas of $G$. Throughout
the paper, we use the following notation:
\[
\bGat\;=\;\Sfl{G}\cap\PV
\qquad
\bGimp=\Sfl{G}\cap \Fmimp
\qquad
\bG=\bGat\cup\bGimp
\]
There are two types of $\FRJof{G}$-sequents, we call \emph{regular}
(arrow $\seqfrjArrow$) and \emph{irregular} (arrow $\seqfrjiArrow$),
defined as follows:
\begin{itemize}
\item \emph{regular sequents} have the form $\seqfrj{\Gamma}{C}$,
  where $\Gamma\subseteq\bG$ and $C\in\Sfr{G}$;

\item \emph{irregular sequents} have the form
  $\seqfrji{\Sigma}{\Theta}{C}$, where $\Sigma\cup\Theta\subseteq\bG$
  and $C\in\Sfr{G}$.
\end{itemize}

\noindent
Given a sequent $\s$, the set $\Lhs{\s}$ of \emph{left} formulas of
$\s$ and the \emph{right} formula of $\sigma$ are defined as follows:
\[
\Lhs{\s}\;=\;
\begin{cases}
  \G &\mbox{if $\s$ is regular}  
  \\
  \Sigma\cup\Theta &\mbox{if $\s$ is irregular}  
\end{cases}
\hspace{5em}
\Rhs{\s}\;=\;C
\]
We remark that the left formulas of $\s$ are left subformulas of $G$
and the right formula of $\s$ is a right subformula of $G$.
Accordingly, the number of $\FRJof{G}$-sequents is finite, hence
$\FRJof{G}$ satisfies the Finite Rule Property~\cite{VoronkovHAR:01}.
Left formulas of irregular sequents $\s$ are partitioned into the sets
$\Sigma$, the \emph{stable} set of $\s$, and $\Theta$.  In forward
proof-search, formulas in $\Sigma$ are preserved as much as possible,
while some of the formulas in $\Theta$ can be lost.  Provability in
$\FRJof{G}$ is defined as follows:
\begin{itemize}
\item $\Dcal$ is an \emph{$\FRJof{G}$-derivation of $\s$} iff $\Dcal$
  is a derivation in the calculus $\FRJof{G}$ having $\s$ as root
  sequent;

\item $\Dcal$ is an \emph{$\FRJof{G}$-derivation of $G$} iff there
  exists a (possibly empty) set of formulas $\G$ such that $\Dcal$ is
  an $\FRJof{G}$-derivation of $\seqfrj{\Gamma}{G}$;

\item $\s$  is \emph{provable} in $\FRJof{G}$,
denoted by $\proves{\FRJof{G}} \s$,
 iff there
  exists an $\FRJof{G}$-derivation of $\s$;

\item $G$  is \emph{provable} in $\FRJof{G}$,
denoted by $\proves{\FRJof{G}}G$,
 iff there
  exists an $\FRJof{G}$-derivation of $G$.

\end{itemize}

\noindent
Soundness of $\FRJof{G}$ is stated as follows:

\begin{theorem}[Soundness of $\FRJof{G}$]\label{theo:FRJsound}
  $\proves{\FRJof{G}}G$ implies $G\not\in\IPL$.
\qed
\end{theorem}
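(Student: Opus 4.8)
The plan is to prove soundness through a semantic reading of $\FRJof{G}$-sequents combined with induction on derivations. Say that a rooted model $\K$ with root $\rho$ \emph{refutes} a sequent $\s$ if $\rho \forcing \Lhs{\s}$ and $\rho \nforcing \Rhs{\s}$; intuitively $\s$ asserts the unprovability of $\Rhs{\s}$ from $\Lhs{\s}$, and a refuting model is a witness of this fact. The core lemma to establish is: if $\proves{\FRJof{G}}\s$, then there is a rooted model refuting $\s$. Theorem~\ref{theo:FRJsound} then follows immediately, since a derivation of $G$ is a derivation of some $\seqfrj{\G}{G}$, and a model refuting it has a root $\rho$ with $\rho \nforcing G$, so $\K$ is a countermodel for $G$ and hence $G \notin \IPL$.

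I would prove the core lemma by induction on the height of the $\FRJof{G}$-derivation. For the base case, the axioms have the form $\seqfrj{\Gat}{p}$ or $\seqfrj{\Gat}{\bot}$ with $\Gat \subseteq \PV$ and $p \notin \Gat$; a single-world model whose valuation is exactly $\Gat$ refutes both, since its root forces precisely the variables in $\Gat$, does not force $p$ (as $p \notin \Gat$), and never forces $\bot$. For the inductive step I would treat each rule of Fig.~\ref{fig:FRJ} in turn, constructing from models refuting the premises a model refuting the conclusion. The single-premise rules (such as left $\lor$ and right $\land$ discussed in the Introduction) are handled by reusing the premise model directly, since enlarging the right formula or the left disjunction preserves the two forcing conditions.

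The delicate rules are those that combine several premises or introduce a fresh root: the join rules $\ruleJOIN$, $\ruleJOINA$, $\ruleJOINO$ and the right-implication rules $\ruleIMPi$, $\ruleIMPni$. For a join, I would build a new model by adding a fresh root $\rho$ below disjoint copies of the premise models, defining $V(\rho)$ from the common left atoms retained by the rule, and then verify that $\rho$ forces every retained left formula while still failing to force the conclusion's right formula. Here the fact that left-hand sides host only variables and $\imp$-formulas is essential: by monotonicity a left atom is forced at $\rho$ iff it lies in $V(\rho)$, whereas forcing of a left implication $A \imp B$ at $\rho$ must be checked against every world above $\rho$, i.e.\ against every world of every premise model. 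The rule's specific choice of which common left formulas to keep — precisely the issue exposed by the failed naive $R\lor$ rule in the Introduction — is what guarantees that this verification goes through and that the right formula remains unforced at $\rho$.

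I expect the main obstacle to be exactly this verification for the multi-premise join rules and the implication rules, where the stable/non-stable partition of irregular sequents $\seqfrji{\Sigma}{\Theta}{C}$ must be tracked. The role of the stable set $\Sigma$ is to mark left formulas guaranteed to persist at the new root, so I anticipate strengthening the induction hypothesis to a refined notion of refutability in which the $\Sigma$-formulas are forced at $\rho$ (hence throughout the combined model, by monotonicity), while the $\Theta$-formulas need only be forced at the corresponding premise root. Closure property~\ref{propClo:1} (if $\a \forcing \G$ then $\a \forcing \Clo{\G}$) should streamline the bookkeeping when checking that compound right formulas stay unforced. Once every rule is shown to preserve this refined refutability, the core lemma, and thus Theorem~\ref{theo:FRJsound}, follow.
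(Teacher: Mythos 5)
Your reduction of the theorem to a semantic lemma about provable sequents is the right instinct, and for \emph{regular} sequents the statement you want is exactly the paper's property~\ref{prop:soundr}. The gap is in the core lemma itself: for irregular sequents, ``$\proves{\FRJof{G}}{\s}$ implies some rooted model refutes $\s$'' is false, and so is the refined version in which only the stable part $\Sigma$ must be forced at the root. The paper's Example~\ref{ex:Ror} is a counterexample: with $H=p\imp q_1\lor q_2$, the irregular sequent $\seqfrji{p,H}{}{(p\land H)\imp(q_1\lor q_2)}$ is derivable (apply $\lor$ to two irregular axioms, then $\ruleIMPi$, shifting $p,H$ into $\Sigma$), yet its right formula is intuitionistically valid, so no world of any model fails to force it --- let alone one that also forces $\Sigma=\{p,H\}$. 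Consequently no unconditional semantic invariant can be pushed through the rules $\lor$ and $\ruleIMPi$ acting on irregular sequents, and your induction breaks exactly where you locate the ``delicate'' work.

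The paper escapes this by abandoning the compositional construction. It fixes one derivation $\Dcal$ of $G$ (whose root is regular), extracts a single model $\Mod{\Dcal}$ whose worlds are the p-sequents of $\Dcal$ ordered by the inverse of $\mapstos$ --- no fresh roots, no disjoint copies, which is also why the extracted models can share worlds and need not be trees --- and proves Lemma~\ref{lemma:soundFRJ} by induction on the height of a sequent \emph{in} $\Dcal$. There, regular sequents get the unconditional property you want (clause~\ref{lemma:soundFRJ:1}), but irregular sequents get only a \emph{conditional} one (clause~\ref{lemma:soundFRJ:2}): if some p-sequent $\s_p$ with $\s\mapsto\s_p$ forces $\Sigma\cap\Sfm{C}$, then $\s_p\nforcing C$. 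This is vacuously true for dead-end irregular sequents such as the one above, and the restriction to $\Sfm{C}$ is what lets the join case close via a secondary induction on formula size. To salvage your plan you would have to replace your notion of refutability for irregular sequents by some such conditional statement relative to the ambient derivation and its extracted model; as an existence claim about models it cannot be repaired.
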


\noindent
Soundness of  $\FRJof{G}$ relies on the fact that rules of 
$\FRJof{G}$ satisfy the following soundness properties:

\begin{enumerate}[label=(S\arabic*),ref=(S\arabic*)]
\item\label{prop:soundr} if $\seqfrj{\G}{C}$ is provable in
  $\FRJof{G}$, then there exists a world $\a$ of a model $\K$ such
  that $\a\forcing\G$ and $\a\nforcing C$;

\item\label{prop:soundi} if $\s=\seqfrji{\Sigma}{\Theta}{C}$ is
  provable in $\FRJof{G}$ and $\s$ can be used to (directly or
  indirectly) prove a regular sequent in $\FRJof{G}$, then there exist
  a world $\a$ of a model $\K$ and a set $\G$ such that
  $\Sigma\subseteq \Gamma\subseteq \Sigma\cup\Theta$ and
  $\a\forcing\G$ and $\a\nforcing C$.
\end{enumerate}

\noindent
Properties~\ref{prop:soundr} and~\ref{prop:soundi}  follow by
Lemma~\ref{lemma:soundFRJ} of Sec.~\ref{sec:wg}. 
  Note that soundness  hides subsumption, which is typical of forward reasoning.  Indeed,
 let $G$ be provable in $\FRJof{G}$.  Then, there exists an
 $\FRJof{G}$-derivation of a sequent $\seqfrj{\G}{G}$.
 Property~\ref{prop:soundr} implies that the formula $(\land \G) \imp
 G$ is not valid (equivalently, $G$ is not provable from assumptions  $\G$).

The calculus $\FRJof{G}$ consists of two axiom rules, some right
introduction rules for the connectives $\land$, $\lor$, $\imp$ and the
rules $\ruleJOINA$ and $\ruleJOINO$ to join sequents; there are no
left rules.  Rules of $\FRJof{G}$ are collected in Fig.~\ref{fig:FRJ},
below we provide an in-depth presentation.

\paragraph{Axiom rules}
We have two axiom rules:
\[
\AXC{}
\RightLabel{$\ruleAXR$}
\UIC{$\seqfrj{\bGat\setminus\{F\}}{F}$}
\DP
\hspace{4em}
\AXC{}
\RightLabel{$\ruleAXI$}
\UIC{$\seqfrji{}{\bGat\setminus\{F\},\bGimp}{F}$}
\DP
\qquad F\in\Prime
\]
Rule $\ruleAXR$ introduces a regular axiom, rule $\ruleAXI$ an
irregular one.  Both axiom sequents have in the right a formula
$F\in\Prime$; in irregular axioms the set $\Sigma$ is empty (denoted
by~$\cdot$).  
We recall that forward
proof-search starts from axiom sequents.

\paragraph{Rules for $\land$}
We have two rules to introduce $\land$ in the right, one concerning
regular sequents and one the irregular ones.  In both rules the type
of the sequents (regular or irregular) does not change:
\[
\AXC{$\seqfrj{\G}{A_k}{}$}
\RightLabel{$\land$}
\UIC{$\seqfrj{\G}{A_1\land A_2}$}
\DP
\hspace{4em}
\AXC{$\seqfrji{\Sigma}{\Theta}{A_k}{}$}
\RightLabel{$\land$}
\UIC{$\seqfrji{\Sigma}{\Theta}{A_1\land A_2}$}
\DP
\quad k\in\{1,2\}
\]

\paragraph{Rules for $\imp$}
In standard refutation calculi, the rule for right implication is
\[
\AXC{$\seqfrj{\G}{B}$}
\RightLabel{$R \imp$}
\UIC{$\seqfrj{\G}{ A\imp B}$}
\DP
\qquad A\in \G
\]
The side condition $A\in \G$ is needed to guarantee that, assuming
that $B$ is not provable from $\G$, then $A\imp B$ is not provable
from $\G$ as well.  With such a rule alone, the calculus $\FRJof{G}$
would be incomplete.  For instance, using the rules presented so far,
we are not able to derive the goal $G=p_1\land p_2 \imp q$.  Indeed,
let us start from the axiom $\seqfrj{ p_1,p_2}{q}$.  We can build the
derivations
\[
\begin{array}{lll}
  \AXC{}
  \RightLabel{$\ruleAXR$}
  \UIC{$\seqfrj{{ p_1},p_2}{q}$}
  \RightLabel{$R \imp$}
  \UIC{$\seqfrj{{ p_1},{ p_2}}{{ p_1}\imp q}$}
  \RightLabel{$R \imp $}
  \UIC{$\seqfrj{p_1,{ p_2}}{{ p_2}\imp (p_1\imp q})$}
  \DP
  &\qquad&
  \AXC{}
  \RightLabel{$\ruleAXR$}
  \UIC{$\seqfrj{p_1,{ p_2}}{q}$}
  \RightLabel{$R \imp$}
  \UIC{$\seqfrj{p_1,{ p_2}}{{ p_2}\imp q}$}
  \RightLabel{$R \imp$}
  \UIC{$\seqfrj{{ p_1},p_2}{{ p_1}\imp (p_2\imp q})$}
  \DP
\end{array}
\]
but there is no means to obtain $G$ in the right, since the formula
$p_1\land p_2$ is not allowed in the left of sequents.  To compensate
for this, we have to relax the side condition.  This leads to the rule
$\ruleIMPi$ of $\FRJof{G}$
\[
\AXC{$\seqfrj{\G}{B}$}
    \RightLabel{$\ruleIMPi$}
    \UIC{$\seqfrj{\G}{A\imp B}$}
\DP
\quad A\in\Clo{\G} 
\]
where we introduce the more general side condition $A\in\Clo{\G}$
(whence the $\in$ in the rule name).  Using this rule, we can prove
the  goal  $G=p_1\land p_2 \imp q$ as follows:
\[
\AXC{}
\RightLabel{$\ruleAXR$}
\UIC{$\seqfrj{p_1,p_2}{q}$}
\RightLabel{$\ruleIMPi$}
\UIC{$\seqfrj{p_1,p_2}{{p_1\land p_2}\imp q}$}
\DP
\qquad { p_1\land p_2}\,\in\,\Clo{\,\{p_1,p_2\}\,}
\]
We can also apply rule $\ruleIMPi$ to an irregular sequent
$\s_1=\seqfrji{\Sigma}{\Theta'}{B}$ to get an irregular sequent
$\s=\seqfrji{\Sigma'}{\Theta}{A\imp B}$.  In this case the antecedent
$A$ must belong to $\Clo{\Sigma'}$ and we are allowed to transfer
formulas from $\Theta'$ to $\Sigma'$ so to satisfy such a condition.
We can formalize rule $\ruleIMPi$ for irregular sequents as follows:
\[
\AXC{$\s_1\;=\;\seqfrji{\Sigma}{\overbrace{\Theta,\Lambda}^{\Theta'}}{B}$}
\RightLabel{$\ruleIMPi$}
\UIC{$\s\;=\;\seqfrji{\underbrace{\Sigma,\Lambda}_{\Sigma'}}{\Theta}{A\imp B}$}
\DP
\quad\begin{array}{l}
       \Theta\cap\Lambda=\emptyset\\[.5ex]
       A\in\Clo{\Sigma\cup\Lambda}
  \end{array}
\]  
The set $\Theta'$ has been partitioned as $\Theta\cup\Lambda$, where
the (possibly empty) set $\Lambda$ is shifted to the left of
semicolon; note that $\Lhs{\s}=\Lhs{\s_1}$.  We remark that, 
to get the formula $A\imp B$ in the conclusion, 
in general many choices of $\Lambda$ are possible, as illustrated in the
next example.

\begin{example}\label{ex:impPS1}
  Let $\s_1=\seqfrji{}{p,q}{B}$ be an $\FRJof{G}$-sequent such that the
  formula $p\lor q\imp B$ belongs to $\Sfr{G}$.  To apply rule
  $\ruleIMPi$ to $\s_1$ so to get a sequent with $p\lor q\imp B$ in the right, we
  have to select a subset $\Lambda$ of $\{p,q\}$ satisfying the side
  condition $p\lor q\in\Clo{\Lambda}$. The following three choices are
  possible:
  \[
  \begin{array}{lll}
    \Lambda_1\;=\;\{p\}
    &
    \Lambda_2\;=\;\{q\}
    &
    \Lambda_3\;=\;\{p,q\}
    \\[1.5ex]
    \AXC{$\seqfrji{}{p,q}{B}$}
    \RightLabel{$\ruleIMPi$}
    \UIC{$\seqfrji{p}{q}{p\lor q\imp B}$}
    \DP
    \hspace{2em}
    &
    \AXC{$\seqfrji{}{p,q}{B}$}
    \RightLabel{$\ruleIMPi$}
    \UIC{$\seqfrji{q}{p}{p\lor q\imp B}$}
    \DP
    \hspace{2em}
    &
    \AXC{$\seqfrji{}{p,q}{B}$}
    \RightLabel{$\ruleIMPi$}
    \UIC{$\seqfrji{p,q}{}{p\lor q\imp B}$}
    \DP
  \end{array}
  \]
  We point out that $\Lambda_1$ and $\Lambda_2$ are minimal sets
  satisfying the side condition, while $\Lambda_3$ is not.  Indeed,
  the empty set is the only proper subset of $\Lambda_1$ and
  $\Lambda_2$ and $p\lor q\not\in\Clo{\emptyset}$.  On the other hand,
  $\Lambda_3$ is not minimal, since both $\Lambda_1$ and $\Lambda_2$
  are proper subsets of $\Lambda_3$.
  \EndEs
\end{example}

We need a further rule to introduce $A\imp B$ in the right, having
premise $\s_1=\seqfrj{\G}{B}$ and conclusion $\s=\seqfrji{}{\Theta}{A
  \imp B}$; this is the only rule of $\FRJof{G}$ turning a regular
sequent into an irregular one.  We require that $A\in\Clo{\G}$ and
$\Theta$ is any subset of $\Clo{\G}\cap\bG$ such that
$A\not\in\Clo{\Theta}$.  By the latter condition, we call the rule
$\ruleIMPni$:
\[
\AXC{$\s_1\;=\;\seqfrj{\G}{B}$}
    \RightLabel{$\ruleIMPni$}
    \UIC{$\s\;=\;\seqfrji{}{\Theta}{A\imp B}$}
\DP 
\qquad 
    \begin{minipage}{15em}
$A\in \Clo{\G}$
\\[.5ex]
$\Theta\;\subseteq\; \Clo{\Gamma} \,\cap\,\bG$
\\[.5ex]
$A\not\in\Clo{\Theta}$      
\end{minipage}
\]
The condition $A\not\in\Clo{\Theta}$ is essentially needed to
introduce a decreasing weight function on sequents (see
Sec.~\ref{sec:wg}).  Also in this case, many choices of $\Theta$ are
possible in general, as shown in the next example.

\begin{example}\label{ex:impPS2}
  Let $\s_1=\seqfrj{p,q}{B}$ be an $\FRJof{G}$-sequent and let assume that
  \[
  \bG\;=\;\Sfl{G}\cap(\PV\cup\Fmimp)\;=\;\{p,\,q,\,r,\,r\imp p,\,p\imp r\}
  \hspace{4em}
  C\,=\,p\land q\imp B\,\in\,\Sfr{G}
  \]
  We show all the possible applications of $\ruleIMPni$ to $\s_1$
  yielding an irregular sequent of the form $\seqfrji{}{\Theta}{C}$.
  By the side conditions, $\Theta$ must be a subset of
  $\Clo{\{p,q\}}\cap\bG=\{p,q,r\imp p\}$, which gives rise
  to eight possible choices:
  \[
  \begin{array}{llll}
    \Theta_1\,=\, \emptyset
    &\Theta_2\,=\,\{ p\}
    & \Theta_3\,=\,\{q\}
    &\Theta_4\,=\,\{r\imp p\}
    \\[.5ex]
    \Theta_5\,=\,\{p,r\imp p\}
    \qquad
    &
    \Theta_6\,=\,\{q,r\imp p\}
    \qquad
    &
    \Theta_7\,=\,\{p,q\}
    \qquad
    &
    \Theta_8\,=\,\{p,q,r\imp p\}
  \end{array}
  \]
  By the side conditions, we need  $p\land q\not\in\Clo{\Theta}$, hence
  $\Theta_7$ and $\Theta_8$ must be ruled out. This leads to six
  possible applications of rule $\ruleIMPni$:
  \[
  \begin{array}{ll}
    \Theta_1\,=\,\emptyset\quad
    \AXC{$\seqfrj{p,q}{B}$}
    \RightLabel{$\ruleIMPni$}
    \UIC{$\seqfrji{}{\Theta_1}{C}$}
    \DP
    &
    \Theta_2\,=\,\{ p\}\quad
    \AXC{$\seqfrj{p,q}{B}$}
    \RightLabel{$\ruleIMPni$}
    \UIC{$\seqfrji{}{\Theta_2}{C}$}
    \DP
    \\[5ex]
    \Theta_3\,=\,\{q\}\quad
    \AXC{$\seqfrj{p,q}{B}$}
    \RightLabel{$\ruleIMPni$}
    \UIC{$\seqfrji{}{\Theta_3}{C}$}
    \DP
    &
    \Theta_4\,=\,\{r\imp p\}\quad
    \AXC{$\seqfrj{p,q}{B}$}
    \RightLabel{$\ruleIMPni$}
    \UIC{$\seqfrji{}{\Theta_4}{C}$}
    \DP
    \\[5ex]
    \Theta_5\,=\,\{p,r\imp p\}\quad
    \AXC{$\seqfrj{p,q}{B}$}
    \RightLabel{$\ruleIMPni$}
    \UIC{$\seqfrji{}{\Theta_5}{C}$}
    \DP
    \quad
    &
    \Theta_6\,=\,\{q,r\imp p\}\quad
    \AXC{$\seqfrj{p,q}{B}$}
    \RightLabel{$\ruleIMPni$}
    \UIC{$\seqfrji{}{\Theta_6}{C}$}
    \DP
  \end{array}
  \]
  We point out that $\Theta_5$ is a maximal set satisfying the side
  condition.  Indeed, the only proper superset $\Theta'$ of $\Theta_5$
  such that $\Theta'\,\subseteq\, \Clo{\{p,q\}}\cap\bG$ is $\Theta_8$
  and $p\land q\in\Clo{\Theta_8}$.  Similarly, $\Theta_6$ is maximal as well.  On
  the other hand, sets $\Theta_1,\dots,\Theta_4$ are not maximal,
  since each of them is a proper subset of $\Theta_5$ or $\Theta_6$.
 \EndEs
\end{example}

\paragraph{Rule $\lor$}
The rule $\lor$ has premises $\s_1= \seqfrji{\Sigma_1}{\Theta_1}{C_1}$
and $\s_2= \seqfrji{\Sigma_2}{\Theta_2}{C_2}$ and conclusion
$\s=\seqfrji{\Sigma}{\Theta}{C_1\lor C_2}$ introducing a disjunction
in the right.  As discussed in the Introduction, some care is required
in defining the left formulas of $\s$.  The leading idea is that the
$\Sigma$-sets of premises must be preserved in the conclusion, while
the $\Theta$-sets are intersected.  We have to guarantee that
$\Lhs{\s}\subseteq\Lhs{\s_1}\cap\Lhs{\s_2}$, hence some side
conditions on the $\Sigma$-sets are needed.  We define:
\[
\AXC{$\s_1\;=\;\seqfrji{\Sigma_1}{\Theta_1}{C_1}$}
\AXC{$\s_2\;=\;\seqfrji{\Sigma_2}{\Theta_2}{C_2}$}
\RightLabel{$\lor$}
\BIC{$\s\;=\;
  \seqfrji{\underbrace{\Sigma_1,\Sigma_2}_{\Sigma}\;}
          {\;\underbrace{\Theta_1\cap\Theta_2}_{\Theta}}
          {C_1\lor C_2}$}
\DP
\qquad 
\begin{minipage}{10em}
  $\Sigma_1\;\subseteq\;\Sigma_2\cup\Theta_2$\\[.5ex]
  $\Sigma_2\;\subseteq\;\Sigma_1\cup\Theta_1$     
\end{minipage}
\]

\paragraph{Join rules}
The \emph{join} rules $\ruleJOINA$ and $\ruleJOINO$ apply to $n\geq 1$
irregular sequents
$\s_1=\seqfrji{\Sigma_1}{\Theta_1}{A_1},\dots,\s_n=\seqfrji{\Sigma_n}{\Theta_n}{A_n}$
and yield a regular sequent $\s=\seqfrj{\G}{C}$; this is the only way
to obtain a regular sequent from irregular ones.  These two rules have
a similar structure and only differ in the form of $C$: in rule
$\ruleJOINA$, $C\in\Prime$ while in $\ruleJOINO$, $C$ is an
$\lor$-formula.  For every $1 \leq j\leq n$, we write the premise
$\s_j$ as follows:
\[
\s_j\;=\;
\seqfrji{\underbrace{\Sigat_j,\Sigimp_j}_{\Sigma_j}\;}
{\;\underbrace{\That_j,\Thimp_j}_{\Theta_j}}{A_j}
\qquad
\begin{minipage}{16em}
  $\Sigat_j\cup\That_j\,\subseteq \,\PV$
  and
  $\Sigimp_j\cup\Thimp_j\,\subseteq\,\Fmimp$
\end{minipage}
\]
Similarly to rule $\lor$, formulas in $\Sigma_j$ must be preserved in
the conclusion and we need $\Lhs{\s}\subseteq
\Lhs{\s_1}\cap\cdots\cap\Lhs{\s_n}$.  Thus,
we require the side condition 
of rule $\lor$ for every pair of distinct premises, namely:

\begin{enumerate}[label=(J\arabic*),ref=(J\arabic*)]
\item\label{sc:J1}
  $\Sigma_i\subseteq \Sigma_j\cup\Theta_j$ for every $i\neq j$.
\end{enumerate}

\noindent
From a semantic perspective, the role of join rules is to downward
expand a countermodel under construction; the conclusion
$\s=\seqfrj{\G}{C}$ directly corresponds to a new world $\a$ of the
countermodel such that all the formulas in $\G$ are forced in $\a$ and
$C$ is not forced in $\a$ (see the soundness property~\ref{prop:soundr}).  
To perform this, we need a further side condition~\ref{sc:J2} on the sets $\Sigma_j$.
Let
\[
\Sigat\;=\;\bigcup_{1\leq j\leq n} \Sigat_j
\hspace{5em}
\Sigimp\;=\;\bigcup_{1\leq j\leq n} \Sigimp_j
\]  
Since both $\Sigat$ and $\Sigimp$ must be kept in the conclusion
$\s=\seqfrj{\G}{C}$ (namely, $\Sigat\cup\Sigimp\subseteq\Gamma$), we
need that all the formulas in $\Sigat\cup\Sigimp$ are forced in the
new world $\a$.  A formula $Y\imp Z$ is \emph{supported} if there
exists a premise $\s_k$ ($1\leq k\leq n$) such that $Y=A_k$ ($A_k$ is
the right formula of $\s_k$).  Intuitively, if a formula
$Y\imp Z\in\Sigimp$ is supported, then $Y$ is not forced in $\a$, and
this allows us to conclude that $Y\imp Z$ is forced in $\a$.  We
require that all the $\imp$-formulas in the sets $\Sigma_j$ are
supported, and this is formalized by the following side condition:

\begin{enumerate}[label=(J\arabic*),ref=(J\arabic*),start=2]
\item\label{sc:J2}
  $Y\imp Z\in\Sigimp$ implies $Y\in\{A_1,\dots,A_n\}$.
\end{enumerate}

\begin{example}
  Let $\s_1=\seqfrji{\Sigma_1 }{\Theta_1}{B}$, where
  $\Sigma_1=\{B\imp X_1, B\imp X_2, C\imp X_3 \}$ and $C\neq B$.  We
  cannot apply a join rule having $\s_1$ as only premise since the
  formula $C\imp X_3$ would not be supported (while both $B\imp X_1$
  and $B\imp X_2$ are, being $B$ the right formula of $\s_1$).  Thus,
  to apply a join rule, we need a further premise
  $\s_2=\seqfrji{\Sigma_2}{\Theta_2}{C}$ such that
  $\Sigma_1\subseteq\Sigma_2\cup\Theta_2$ and
  $\Sigma_2\subseteq\Sigma_1\cup\Theta_1$ (see~\ref{sc:J1}).  In turn,
  $\Sigma_2$ might contain some non-supported formulas; e.g., a
  formula $D\imp X_4$ such that $D\neq B$ and $D\neq C$; in this case,
  we need a further premise $\s_3$ s.t. $\Rhs{\s_3}=D$ to support it. \EndEs
\end{example}

Side conditions do not concern the sets $\Theta_j$ and in the
conclusion we keep some of the formulas in the intersection of all the
$\Theta_j$'s.  More precisely, given a set of $\imp$-formulas $\Gimp$
and a set of formulas $\Upsilon$, let
\[
\Restr{\Gimp}{\Upsilon}\;=\;
\{~Y\imp Z\in \Gimp~|~Y\in\Upsilon~\}
\]  
We call $\Restr{\Gimp}{\Upsilon}$ the \emph{restriction} of $\Gimp$ to
$\Upsilon$.  Then, the formulas $\That$ and $\Thimp$ to be kept in the
conclusion are defined as follows:
\[
 \That\;=\;\bigcap_{1\leq j\leq n} \That_j
 \hspace{5em}
\Thimp\;=\;\Restr{  \left(\,\bigcap_{1\leq j\leq n} \Thimp_j\, \right) }{\;\{A_1,\dots,A_n\} }
\]
We can now define the conclusion $\s$ of a join rule having premises
$\s_1,\dots,\s_n$ matching the side conditions~\ref{sc:J1}
and~\ref{sc:J2}.  In rule $\ruleJOINA$, we can choose as right formula
of $\s$ any formula $F\in\Prime\cap\Rhs{G}$ such that
$F\not\in\Sigat$.  We set:
\[
\s\;=\; \seqfrj{\Sigat,\,\That\setminus\{F\},\,\Sigimp,\,\Thimp}{F}
\]
In rule $\ruleJOINO$, we can choose as right formula of
$\s$ any $\lor$-formula $C_1\lor C_2\in\Rhs{G}$ such that both $C_1$ and $C_2$ are
among $A_1,\dots , A_n$.  We set:
\[
\s\;=\; \seqfrj{\Sigat,\,\That,\,\Sigimp,\,\Thimp}{C_1\lor C_2}
\qquad\{C_1,C_2\}\;\subseteq\;\{A_1,\dots,A_n\}
\]
We note that rule $\ruleJOINO$ is similar to the rule $r_n$ presented
in~\cite{Skura:89}.

We point out the following special case of application of $\ruleJOINA$
having only one premise ($\Sigimp$ is possibly empty):

\[
\begin{array}{c}
  \AXC{$\seqfrji{\Sigat,\,\Sigimp\,}{\,\That,\,\Thimp}{A}$}
  \RightLabel{$\ruleJOINA$}
  \UIC{$\seqfrj{\Sigat,\,\Sigimp,\,\That\setminus\{F\},\,\Restr{\Thimp}{\{A\}}}{F}$}
  \DP
  \qquad 
  \begin{minipage}{20em}
    $\Sigimp\;=\;\{A\imp B_1,\dots, A\imp B_m\}$
    \\[.5ex]
    $F\in\Prime\setminus\Sigat$
  \end{minipage}
\end{array}
\]

\medskip

Now, we begin to study the properties of $\FRJof{G}$-derivations. Let
us introduce the following relations between sequents of $\FRJof{G}$:
\begin{itemize}
\item $\s_1\mapstorz{\Rcal} \s_2$ iff $\Rcal$ is a rule of $\FRJof{G}$
  such that $\s_2$ is the conclusion of $\Rcal$ and $\s_1$ is one of
  its premises;
  
\item $\s_1\mapstoz \s_2$ iff there exists a rule $\Rcal$ of
  $\FRJof{G}$ such that $\s_1\mapstorz{\Rcal} \s_2$;

\item $\mapsto$ is the transitive closure of $\mapstoz$;

\item $\mapstos$ is the reflexive closure of $\mapsto$.
\end{itemize}

\noindent
By inspecting the rules of $\FRJof{G}$ and exploiting
properties~\ref{propClo:3} and~\ref{propClo:6} of closures, the
following facts can be easily proved:

\begin{lemma}\label{lemma:lhs}
\mbox{}
\begin{enumerate}[label=(\roman*),ref=(\roman*)]
\item\label{lemma:lhs:1} $\s_1\mapstorz{\Rcal} \s_2$ and
  $\Rcal\neq\,\ruleIMPni$ imply $\Lhs{\s_2}\,\subseteq\,\Lhs{\s_1}$.

\item\label{lemma:lhs:2} $\s_1\mapstoz \s_2$ implies
  $\Lhs{\s_2}\,\subseteq\,\Clo{\Lhs{\s_1}}$.

\item\label{lemma:lhs:3} $\s_1\mapstos \s_2$ implies
  $\Lhs{\s_2}\,\subseteq\,\Clo{\Lhs{\s_1}}$.  \qed
\end{enumerate}
\end{lemma}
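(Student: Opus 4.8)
The plan is to establish the three items in sequence, with item~\ref{lemma:lhs:1} carrying all the combinatorial weight and items~\ref{lemma:lhs:2} and~\ref{lemma:lhs:3} following almost formally from the closure properties. For~\ref{lemma:lhs:1} I would run a case analysis on the rule $\Rcal$, checking for each rule distinct from $\ruleIMPni$ and for each of its premises $\s_1$ that $\Lhs{\s_2}\subseteq\Lhs{\s_1}$. The axiom rules are premiseless, hence vacuous. The $\land$-rules (regular and irregular) and the regular $\ruleIMPi$ do not touch the left-hand side at all, so $\Lhs{\s_2}=\Lhs{\s_1}$; for the irregular $\ruleIMPi$ the conclusion only moves the block $\Lambda$ from the right of the semicolon to the left, whence again $\Lhs{\s}=\Lhs{\s_1}$, exactly as remarked when the rule was introduced.

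The real content sits in the two multi-premise schemes, where the inclusion must hold simultaneously below \emph{every} premise. For rule $\lor$, with $\Lhs{\s}=\Sigma_1\cup\Sigma_2\cup(\Theta_1\cap\Theta_2)$, I would invoke the side condition $\Sigma_2\subseteq\Sigma_1\cup\Theta_1$ together with $\Theta_1\cap\Theta_2\subseteq\Theta_1$ to land $\Lhs{\s}$ inside $\Sigma_1\cup\Theta_1=\Lhs{\s_1}$, and the symmetric condition to land it inside $\Lhs{\s_2}$. For the join rules $\ruleJOINA$ and $\ruleJOINO$, note first that in both cases $\Lhs{\s}\subseteq\Sigat\cup\Sigimp\cup\That\cup\Thimp$ (rule $\ruleJOINA$ merely deletes $F$). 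Fixing a premise index $j$, the two intersected blocks are immediate, since $\That=\bigcap_k\That_k\subseteq\That_j$ and $\Thimp\subseteq\bigcap_k\Thimp_k\subseteq\Thimp_j$; for the two unions $\Sigat=\bigcup_k\Sigat_k$ and $\Sigimp=\bigcup_k\Sigimp_k$ the key is side condition~\ref{sc:J1}, which gives $\Sigat_k\cup\Sigimp_k=\Sigma_k\subseteq\Sigma_j\cup\Theta_j=\Lhs{\s_j}$ for every $k$, so both unions sit inside $\Lhs{\s_j}$. Combining the four inclusions yields $\Lhs{\s}\subseteq\Lhs{\s_j}$, completing~\ref{lemma:lhs:1}.

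Item~\ref{lemma:lhs:2} then splits on whether $\Rcal=\ruleIMPni$. For every other rule, \ref{lemma:lhs:1} gives $\Lhs{\s_2}\subseteq\Lhs{\s_1}$, and property~\ref{propClo:3} (i.e.\ $\G\subseteq\Clo{\G}$) upgrades this to $\Lhs{\s_2}\subseteq\Clo{\Lhs{\s_1}}$. The rule $\ruleIMPni$ is precisely the one engineered to escape $\Lhs{\s_1}$: its conclusion has $\Lhs{\s}=\Theta$ subject to the side condition $\Theta\subseteq\Clo{\G}\cap\bG\subseteq\Clo{\Lhs{\s_1}}$, so the inclusion into the closure holds directly. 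For item~\ref{lemma:lhs:3} I would argue by induction on the length of the $\mapstoz$-chain witnessing $\s_1\mapstos\s_2$. The reflexive case $\s_1=\s_2$ is again property~\ref{propClo:3}; the length-one base case is~\ref{lemma:lhs:2}; and for $\s_1\mapstoz\s'\mapsto\s_2$, item~\ref{lemma:lhs:2} gives $\Lhs{\s'}\subseteq\Clo{\Lhs{\s_1}}$, the induction hypothesis gives $\Lhs{\s_2}\subseteq\Clo{\Lhs{\s'}}$, and property~\ref{propClo:6} applied to $\Lhs{\s'}\subseteq\Clo{\Lhs{\s_1}}$ gives $\Clo{\Lhs{\s'}}\subseteq\Clo{\Lhs{\s_1}}$, so the two inclusions compose.

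I expect the only delicate step to be the join rules in~\ref{lemma:lhs:1}: the asymmetry between the $\Sigma$-blocks (taken as unions, controlled by~\ref{sc:J1}) and the $\Theta$-blocks (taken as intersections, and for $\Thimp$ additionally restricted to $\{A_1,\dots,A_n\}$) has to be tracked carefully so that the conclusion stays below each premise at once. Everything else is either vacuous, an equality of left-hand sides, or a one-line application of a closure property.
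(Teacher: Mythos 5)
Your proposal is correct and matches the paper's intent exactly: the paper leaves this lemma as an easy inspection of the rules together with closure properties~\ref{propClo:3} and~\ref{propClo:6}, and your case analysis (vacuous axioms, unchanged left-hand sides for $\land$ and $\ruleIMPi$, the side conditions~\ref{sc:J1} and the $\lor$-conditions handling the multi-premise cases, and the $\Theta\subseteq\Clo{\G}$ side condition of $\ruleIMPni$ for item~\ref{lemma:lhs:2}) is precisely the argument being gestured at. The induction on chain length for item~\ref{lemma:lhs:3} via~\ref{propClo:6} is also the standard composition step the authors intend.
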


%
\begin{figure}[t]
  \[
  \begin{array}{c}
    \begin{minipage}{1.0\linewidth}\small
      $\bGat\,=\,\Sfl{G}\cap\PV$,\, $\bGimp\,=\,\Sfl{G}\cap \Fmimp$, 
      $\bG\,=\,\bGat\cup\bGimp$. \\[.5ex]
      In the conclusion $\s$ of each rule,
      $\Rhs{\s}\in\Sfr{G}$
    \end{minipage}
    \\[4ex]
    \AXC{}
    \RightLabel{$\ruleAXR$}
    \UIC{$\seqfrj{\bGat\setminus\{F\}}{F}$}
    \DP
    \hspace{6em}
    \AXC{}
    \RightLabel{$\ruleAXI$}
    \UIC{$\seqfrji{}{\bGat\setminus\{F\},\bGimp}{F}$}
    \DP
    \qquad F\in\Prime
    \\[4ex]
    \AXC{$\seqfrj{\G}{A_k}{}$}
    \RightLabel{$\land$}
    \UIC{$\seqfrj{\G}{A_1\land A_2}$}
    \DP
    \hspace{8em}
    \AXC{$\seqfrji{\Sigma}{\Theta}{A_k}{}$}
    \RightLabel{$\land$}
    \UIC{$\seqfrji{\Sigma}{\Theta}{A_1\land A_2}$}
    \DP
    \quad k\in\{1,2\}
    \\[4ex]
    \AXC{$\seqfrji{\Sigma_1}{\Theta_1}{C_1}$}
    \AXC{$\seqfrji{\Sigma_2}{\Theta_2}{C_2}$}
    \RightLabel{$\lor$}
    \BIC{$\seqfrji{\Sigma_1,\Sigma_2}{\Theta_1\cap\Theta_2}{C_1\lor C_2}$}
    \DP
    \qquad 
    \begin{minipage}{6em}
      $\Sigma_1\subseteq\Sigma_2\cup\Theta_2$\\
      $\Sigma_2\subseteq\Sigma_1\cup\Theta_1$     
    \end{minipage}
    \\[4ex]
    \AXC{$\seqfrj{\G}{B}$}
    \RightLabel{$\ruleIMPi$}
    \UIC{$\seqfrj{\G}{A\imp B}$}
    \DP
    \quad A\in\Clo{\G} 
    \hspace{6em}
    \AXC{$\seqfrji{\Sigma}{\Theta,\Lambda}{B}$}
    \RightLabel{$\ruleIMPi$}
    \UIC{$\seqfrji{\Sigma,\Lambda}{\Theta}{A\imp B}$}
    \DP
    \quad\begin{array}{l}
           \Theta\cap\Lambda=\emptyset\\[.5ex]
           A\in\Clo{\Sigma\cup\Lambda}
         \end{array}     
    \\[4ex]
    \AXC{$\seqfrj{\G}{B}$}
    \RightLabel{$\ruleIMPni$}
    \UIC{$\seqfrji{}{\Theta}{A\imp B}$}
    \DP 
    \qquad 
    \begin{minipage}{15em}
      $\Theta\;\subseteq\; \Clo{\Gamma} \,\cap\,\bG$
      \\[1ex]
      $A\;\in\; \Clo{\G}\,\setminus\,\Clo{\Theta}$      
    \end{minipage}
    \\[6ex]
    \begin{minipage}{1.0\linewidth}
      Let, for $1\leq j\leq n$, $\s_j\;=\;
      \seqfrji{\underbrace{\Sigat_j,\Sigimp_j}_{\Sigma_j}}
      {\underbrace{\That_j,\Thimp_j}_{\Theta_j}}{A_j}$ and
      $\Upsilon=\{A_1,\dots,A_n\}$
    \end{minipage}
    \\[4ex]
    \Sigat=\bigcup_{1\leq j\leq n} \Sigat_j
    \qquad
    \That=\bigcap_{1\leq j\leq n} \That_j
    \qquad
    \Sigimp=\bigcup_{1\leq j\leq n} \Sigimp_j
    \qquad
    \Thimp=\Restr{(\bigcap_{1\leq j\leq n} \Thimp_j)}{ \Upsilon }
    \\[4ex]
    \AXC{$\s_1\qquad\cdots\qquad \s_n$}
    \RightLabel{$\ruleJOINA$}
    \UIC{$\seqfrj{\Sigat,\,\That\setminus\{F\},\,\Sigimp,\,\Thimp}{F}$}
    \DP 
    \qquad
    \begin{minipage}{20em}
      $\Sigma_i\subseteq \Sigma_j\cup\Theta_j$,
      for every $i\neq j$
      \\[.5ex]
      $Y\imp Z\in\Sigimp$ implies $Y\in\Upsilon$
      \\[.5ex]
      $F\in\Prime\setminus\Sigat$
    \end{minipage}
    \\[6ex]
    \AXC{$\s_1\qquad\cdots\qquad \s_n$}
    \RightLabel{$\ruleJOINO$}
    \UIC{$\seqfrj{\Sigat,\,\That,\,\Sigimp,\,\Thimp}{C_1\lor C_2}$}
    \DP 
    \qquad
    \begin{minipage}{20em}
      $\Sigma_i\subseteq \Sigma_j\cup\Theta_j$,
      for every $i\neq j$
      \\[.5ex]
      $Y\imp Z\in\Sigimp$ implies $Y\in\Upsilon$
      \\[.5ex]
      $\{C_1,C_2\}\,\subseteq\,\Upsilon$
    \end{minipage}
  \end{array}
  \]
  \caption{The calculus $\FRJof{G}$.}
  \label{fig:FRJ}
\end{figure}

In the next example we show that the unsound derivation discussed in
the Introduction cannot be simulated in $\FRJof{G}$.

\begin{example}\label{ex:Ror}
  In the Introduction, by applying the unsound rule $R\lor$, we got a
  wrong derivation of the sequent $\s=\seqfrj{p,H}{q_1\lor q_2}$,
  where $H= p\imp q_1\lor q_2$.  By the soundness
  property~\ref{prop:soundr}, if $\s$ could be proved in $\FRJof{G}$,
  there should be a world $\a$ of a model such that both $p$ and $H$
  are forced in $\a$ and $q_1\lor q_2$ is not forced in $\a$, a
  contradiction.  We show that it is not possible to build an
  $\FRJof{G}$-derivation $\Dcal$ of $\s$.  Indeed, the root rule of
  $\Dcal$ should be $\ruleJOINO$, the only rule of $\FRJof{G}$ having
  as conclusion a regular sequent with an $\lor$-formula in the right.
  Since the formula $H$ must be supported (see~\ref{sc:J2}), rule
  $\ruleJOINO$ should have a premise
  $\s'=\seqfrji{\Sigma'}{\Theta'}{p}$.  Since the only irregular
  sequents having a formula in $\Prime$ in the right are the irregular
  axioms, $\Dcal$ should have the following shape:
  \[
  \AXC{$\cdots$}
  \AXC{}
  \RightLabel{$\ruleAXI$}
  \UIC{$\s'\;=\;\seqfrji{}{\Theta'}{p}$}
  \AXC{$\cdots$}
  \LeftLabel{$\Dcal\;=\quad$}
  \RightLabel{$\ruleJOINO$}
  \TIC{$\s\;=\;\seqfrj{p,H}{q_1\lor q_2}$}
  \DP
  \qquad p\not\in\Theta'
  \]  
  Hence, $\s'\mapstorz{\ruleJOINO} \s$ which, by
  Lemma~\ref{lemma:lhs}\ref{lemma:lhs:1}, implies $\{p,H\}\subseteq
  \Theta'$.  Thus, both $p\in\Theta'$ and $p\not\in\Theta'$, a
  contradiction.  We conclude that $\s$ is not provable.
  
  However, it is possible to have applications of rule $\ruleJOINO$
  having conclusion $\seqfrj{p}{q_1\lor q_2}$ or $\seqfrj{H}{q_1\lor
    q_2}$.  For instance, let
  \[
  G\;=\; (p\land H)\,\imp \, (q_1\lor q_2)
  \qquad  H\;= \;p\imp q_1\lor q_2
  \]
  We have:
  \[
  \Sfl{G}\:=\:\{\, p\land H,\, H,\,q_1\lor q_2,\,p,\,\,q_1,\,q_2\, \}
  \qquad
  \Sfr{G}\:=\:\{\, G,\, \,q_1\lor q_2,\,p,\,\,q_1,\,q_2\, \}
  \]  
  We can build the following $\FRJof{G}$-derivation:
  \[
  \begin{array}{c}
    \AXC{}
    \RightLabel{$\ruleAXI$}
    \UIC{$\seqfrji{}{p,q_2,H}{q_1}$}
    \AXC{}
    \RightLabel{$\ruleAXI$}
    \UIC{$\seqfrji{}{p,q_1,H}{q_2}$}
    \RightLabel{$\ruleJOINO$}
    \BIC{$\seqfrj{p}{q_1\lor q_2}$}
    \DP
  \end{array}
  \]  
  We point out that the side conditions~\ref{sc:J1} and~\ref{sc:J2}
  trivially hold, since the $\Sigma$-sets of the premises are empty.
  In the conclusion, $H$ is left out since it is not supported (no
  premise has $p$ in the right).  Similarly, we can build the
  following $\FRJof{G}$-derivation:
  \[
  \AXC{}
  \RightLabel{$\ruleAXI$}
  \UIC{$\seqfrji{}{p,q_2,H}{q_1}$}
  \AXC{}
  \RightLabel{$\ruleAXI$}
  \UIC{$\seqfrji{}{p,q_1,H}{q_2}$}
  \AXC{}
  \RightLabel{$\ruleAXI$}
  \UIC{$\seqfrji{}{q_1,q_2,H}{p}$}
  \RightLabel{$\ruleJOINO$}
  \TIC{$\seqfrj{H}{q_1\lor q_2}$}
  \DP
  \]
  In the conclusion, $p$ is omitted since it does not occur as left
  formula in the right-most premise.  We can also build the following
  $\FRJof{G}$-derivation of an irregular sequent $\s_1$ having the
  goal formula $G$ in the right:
  \[
  \AXC{}
  \RightLabel{$\ruleAXI$}
  \UIC{$\seqfrji{}{p,q_2,H}{q_1}$}
  \AXC{}
  \RightLabel{$\ruleAXI$}
  \UIC{$\seqfrji{}{p,q_1,H}{q_2}$}
  \RightLabel{$\lor$}
  \BIC{$\seqfrji{}{p,H}{q_1\lor q_2}$}
  \RightLabel{$\ruleIMPi$}
  \UIC{$\s_1\;=\;\seqfrji{p,H}{}{(p\land H)\imp (q_1\lor q_2)}$}
  \DP
  \]
  Apparently, this contradicts the soundness of the calculus, since
  the formula $G$ is valid.  Actually, we conclude that the
  hypothesis of the soundness property~\ref{prop:soundi} does not
  hold, namely, $\s_1$ cannot be used to derive in $\FRJof{G}$ a
  regular sequent.
  \EndEs
\end{example}

Hereafter, to reduce the proof-search space, we assume that
$\FRJof{G}$-derivations comply with the following
restrictions~\ref{PS1}--\ref{PS4}:

\begin{enumerate}[label=(PS\arabic*),ref=(PS\arabic*)]
\item\label{PS1} In rule $\ruleIMPi$, we require that $\Lambda$ is a \emph{minimal}
  set satisfying the side condition, namely:
  $\Lambda'\subsetneq\Lambda$ implies
  $A\not\in\Clo{\Sigma\cup\Lambda'}$.

\item\label{PS2} In rule $\ruleIMPni$, we require that $\Theta$ is a \emph{maximal}
  set satisfying the side condition, namely:
  $\Theta\,\subsetneq\,\Theta'\,\subseteq\, \Clo{\Gamma}\cap\bG$ implies
  $A\in\Clo{\Theta'}$.

\item\label{PS3} In $\ruleJOINA$, for every $Y\in\Upsilon$ there is
  $Y\imp Z\in\Sfl{G}$.

\item\label{PS4} In $\ruleJOINO$, for every $Y\in\Upsilon$, either
  there is $Y\imp Z\in\Sfl{G}$ or there is $Y\lor Z\in\Sfr{G}$ or
  there is $Z\lor Y\in\Sfr{G}$.

\end{enumerate}

\noindent
In Ex.~\ref{ex:impPS1}, the applications of $\ruleIMPi$ complying
with~\ref{PS1} are the ones concerning $\Lambda_1$ and $\Lambda_2$; In
Ex.~\ref{ex:impPS2}, the applications of $\ruleIMPni$
satisfying~\ref{PS2} are the ones related to $\Theta_5$ and
$\Theta_6$.

\begin{figure}[t]\small
  \centering
  \[
  \begin{array}{l}
    S \;= \; H\,\imp\, \neg\neg p \lor \neg p
    \qquad
    H\;=\; (\neg\neg p\imp p)\, \imp\, \neg p\lor  p
    \\[1ex]
    \Sfl{S} \,=\, 
    \{\, H,\,\neg p\lor p,\,\neg\neg p,\,\neg p,\,p\,\} 
    \quad
    \Sfr{S} \,=\, 
    \{\, S,\,\neg\neg p\lor \neg p,\,\neg\neg p \imp p,\,
    \neg\neg p,\,\neg p,\, p,\, \bot   \,\}    
  \end{array}
  \]
  \leqnomode 
  \setcounter{equation}{0} 
  \begin{align}
    \label{exST:1}
    \tseqfrji{}{p,\:H,\:\neg\neg p,\:\neg p}{\bot}  && \ruleAXI 
    && \mbox{// Start}
    \\
    \label{exST:2}
    \tseqfrji{}{H,\:\neg\neg p,\:\neg p}{p}  && \ruleAXI
    \\[1ex]
    \hline 
    \label{exST:3}
    \tseqfrji{p}{H,\:\neg\neg p,\:\neg p}{\neg p} &&
    \ruleIMPi\;\eqref{exST:1}                                          
    && \mbox{// Iteration 1} 
    \\
    \label{exST:4}
    \tseqfrji{\neg\neg p}{H,\:\neg p}{\neg\neg p\imp p} &&
    \ruleIMPi\;\eqref{exST:2}                                             
    \\
    \label{exST:5}
    \tseqfrj{\neg p}{\bot} &&
    \ruleJOINA\;\eqref{exST:2} 
    \\[1ex]
    \hline  
    \label{exST:6}
    \tseqfrj{p,\:\neg\neg p}{\bot} &&
    \ruleJOINA\;\eqref{exST:3}
    && \mbox{// Iteration 2} 
    \\
    \label{exST:7}
    \tseqfrji{}{H}{\neg\neg p}  &&
    \ruleIMPni\;\eqref{exST:5}  
    \\[1ex]
    \hline 
    \label{exST:8}
    \tseqfrji{}{H,\neg\neg p}{\neg p} &&
    \ruleIMPni\;\eqref{exST:6}
    && \mbox{// Iteration 3}                                   
    \\[1ex]
    \hline  
    \label{exST:9}
    \tseqfrj{H,\:\neg\neg p}{p} &&
    \ruleJOINA\;\eqref{exST:4}\;\eqref{exST:8} 
    && \mbox{// Iteration 4}                        
    \\[1ex]
    \hline 
    \label{exST:10}
    \tseqfrji{}{H}{\neg\neg p\imp p} &&
    \ruleIMPni\;\eqref{exST:9}                              
    && \mbox{// Iteration 5}                        
    \\[1ex]
    \hline 
    \label{exST:11}
    \tseqfrj{H}{\neg\neg p\lor \neg p} &&
    \ruleJOINO\;\eqref{exST:7}\;\eqref{exST:8}\;\eqref{exST:10}
    && \mbox{// Iteration 6} 
    \\[1ex]
    \hline  
    \label{exST:12}
    \tseqfrj{H}{S} &&
    \ruleIMPi\;\eqref{exST:11}
    && \mbox{// Iteration 7}                                    
  \end{align}

  \vspace{4ex}
  \caption{The $\FRJof{S}$-derivation $\Dcal_S$ of $S$ and the model $\Mod{\Dcal_S}$.}
  \label{fig:frjST}
\end{figure}

We now provide some significant examples of derivations.

\begin{example}\label{ex:frjNishimura}
  Let us consider the following instances $S$ and $T$ of \emph{Scott}
  and \emph{Anti-Scott} principles, which are equivalent to Nishimura
  formulas $N_{10}$ and $N_{9}$ respectively~\cite{ChaZak:97} (the
  schema generating $N_i$ is given in Sec.~\ref{sec:rel}):
  \[
  S\;=\;\left((\neg\neg p\imp p) \imp \neg p\lor  p\right)\,\imp\,\neg\neg p \lor \neg p
  \qquad T\;=\; S \,\imp\,(\neg\neg p\imp p)\lor \neg\neg p 
  \]
  Both formulas are valid in Classical Logic but not in $\IPL$.
  Figs.~\ref{fig:frjST} and~\ref{fig:frjAST} show an
  $\FRJof{S}$-derivation $\Dcal_S$ of $S$ and an
  $\FRJof{T}$-derivation $\Dcal_T$ of $T$ respectively, in linear
  representation.  We populate the database of proved sequents
  according with the naive recipe of~\cite{VoronkovHAR:01}: we start
  by inserting the axioms; then we enter a loop where, at each
  iteration, we apply the rules to the sequents collected in previous
  steps.  For the sake of conciseness, we only show the sequents
  needed to get the goal.  We denote with $\s_{(k)}$ the sequent
  derived at line~$(k)$. The tree-like structure of derivations
  $\Dcal_S$ and $\Dcal_T$ are displayed in Figs.~\ref{fig:countST}
  and~\ref{fig:countAST} respectively.  We point out that $\Dcal_T$
  contains an application of $\ruleJOINO$ having four premises,
  namely:
  \[
  \begin{array}{c}
    \AXC{$\sigma_{\eqref{exAST:2}} \qquad \sigma_{\eqref{exAST:7}} \qquad\sigma_{\eqref{exAST:8}}\qquad\sigma_{\eqref{exAST:11}}$}
    \RightLabel{$\ruleJOINO$} 
    \UIC{$\sigma_{\eqref{exAST:13}}$}
    \DP
    \\[3ex]
    \begin{array}{ll}
      \sigma_{\eqref{exAST:2}}\;=\; \seqfrji{}{S,\:\neg \neg p \imp p,\:\neg\neg p,\:\neg p}{p} 
      \qquad
      &
      \sigma_{\eqref{exAST:7}}\;=\;\seqfrji{}{S,\:\neg \neg p \imp p}{\neg\neg p}
      \\
      \sigma_{\eqref{exAST:8}}\;=\;  \seqfrji{}{S,\:\neg \neg p \imp p,\:\neg\neg p}{\neg p} 
      &
      \sigma_{\eqref{exAST:11}}\;=\; \seqfrji{\neg \neg p \imp p}{S,\:\neg\neg p}{H}
      \\
      \sigma_{\eqref{exAST:13}}\;=\; \seqfrj{\neg \neg p \imp p,\:S}{ \neg p\lor p } 
      &      
      S\;=\;   H\,\imp\, \neg\neg p \lor \neg p 
    \end{array}
  \end{array}                                              
  \]   
  The sequent $\sigma_{\eqref{exAST:13}}$ is essential to build the
  countermodel since it corresponds to a world where both
  $\neg \neg p \imp p$ and $S$ are forced, while $\neg p\lor p$ is
  not; to get $\G=\{ \neg \neg p \imp p,S\}$ in the left of
  $\sigma_{\eqref{exAST:13}}$, we have to use premises $\s'$ such that
  $\G\subseteq\Lhs{\s'}$ (see Lemma~\ref{lemma:lhs}\ref{lemma:lhs:1}).
  Sequents $\sigma_{\eqref{exAST:8}}$ and $\sigma_{\eqref{exAST:2}}$
  are needed to obtain $\neg p\lor p$ in the right of
  $\sigma_{\eqref{exAST:13}}$, while sequents
  $\sigma_{\eqref{exAST:7}}$ and $\sigma_{\eqref{exAST:11}}$ are
  needed to support $\neg \neg p \imp p$ and $S$ respectively.  One
  can easily check that the side conditions~\ref{sc:J1}
  and~\ref{sc:J2} hold, thus the displayed application of rule
  $\ruleJOINO$ is sound.  Finally, we point out that it is not
  possible to obtain $\sigma_{\eqref{exAST:13}}$ using less than four
  premises.
 \EndEs   
\end{example}

   
\begin{figure}[t]\small
  \centering
  \[
  \begin{array}{lcl}
    \multicolumn{3}{c}{
      T\;=\; S \,\imp\, (\neg \neg p \imp p)\lor \neg\neg p
      \qquad
      S \;=\; H\,\imp\, \neg\neg p \lor \neg p
      \qquad
      H \;=\;(\neg \neg p \imp p)\,\imp\, \neg p \lor p}
    \\[1ex]
    \Sfl{T} & \;=\; &
    \{\: S,\: \neg \neg p \imp p,\: \neg \neg p\lor \neg p,\:\neg\neg p,\:\neg p,\:p\:\} 
    \\[.5ex]
    \Sfr{T} & \;=\; & 
    \{\: T,\:H,\: (\neg\neg p \imp p)\lor \neg \neg p,\:\neg \neg p \imp p,\:\neg\neg p,\:
    \neg p\lor p ,\:\neg p,\:p,\: \bot   \:\}    
  \end{array}
  \]
  \leqnomode 
  \setcounter{equation}{0} 
  \begin{align}
    \label{exAST:1}
    \tseqfrji{}{p,\:S,\:\neg \neg p \imp p,\:\neg\neg p,\:\neg p}{\bot}  && \ruleAXI
    \\
    \label{exAST:2}
    \tseqfrji{}{S,\:\neg \neg p \imp p,\:\neg\neg p,\:\neg p}{p}  && \ruleAXI
    \\[1ex]
    \hline 
    \label{exAST:3}
    \tseqfrji{p}{S,\:\neg \neg p \imp p,\:\neg\neg p,\:\neg p}{\neg p}  &&               
    \ruleIMPi\;\eqref{exAST:1}
    \\
    \label{exAST:4}
    \tseqfrji{\neg p}{p,\:S,\:\neg\neg p \imp p,\:\neg\neg p}{\neg\neg p}   &&                 
    \ruleIMPi\;\eqref{exAST:1}
    \\[1ex]
    \hline 
    \label{exAST:5}
    \tseqfrj{\neg p,\:\neg \neg p \imp p}{\bot}  &&  
\ruleJOINA\;\eqref{exAST:2} \;\eqref{exAST:4}
    \\
    \label{exAST:6}
    \tseqfrj{p,\:\neg\neg p}{\bot}  &&  
    \ruleJOINA\;\eqref{exAST:3}                                     
    \\[1ex]
    \hline 
\label{exAST:7}
    \tseqfrji{}{S,\:\neg \neg p \imp p}{\neg\neg p}  &&  
    \ruleIMPni\;\eqref{exAST:5} 
\\
    \label{exAST:8}
    \tseqfrji{}{S,\:\neg \neg p \imp p,\:\neg\neg p}{\neg p}  &&  
    \ruleIMPni\;\eqref{exAST:6} 
    \\[1ex]
    \hline 
        \label{exAST:9}
    \tseqfrji{}{S,\:\neg \neg p \imp p,\:\neg\neg p}{ \neg p\lor p }  &&  
    \lor \;\eqref{exAST:2}  \;\eqref{exAST:8}
     \\
    \label{exAST:10}
    \tseqfrj{\neg\neg p}{p} &&
     \ruleJOINA\;\eqref{exAST:8}
    \\[1ex]
    \hline 
    \label{exAST:11}
    \tseqfrji{\neg \neg p \imp p}{S,\:\neg\neg p}{H}  &&  
    \ruleIMPi\;\eqref{exAST:9}  
    \\ 
    \label{exAST:12}
    \tseqfrji{}{S}{\neg \neg p \imp p} &&
    \ruleIMPni\;\eqref{exAST:10}                                                  
    \\[1ex] 
    \hline 
    \label{exAST:13}
    \tseqfrj{\neg \neg p \imp p,\:S}{ \neg p\lor p }  &&  
    \ruleJOINO\;\eqref{exAST:2}\;\eqref{exAST:7}\;\eqref{exAST:8}\;\eqref{exAST:11}
    \\[1ex]
    \hline 
    \label{exAST:14} 
    \tseqfrji{}{S}{H}  &&  
    \ruleIMPni\;\eqref{exAST:13}
    \\[1ex]
    \hline 
    \label{exAST:15}
    \tseqfrj{S}{(\neg \neg p \imp p)\lor \neg\neg p}  &&  
    \ruleJOINO\;\eqref{exAST:7}\;\eqref{exAST:12}\;\eqref{exAST:14}     
    \\[1ex]
    \hline 
    \label{exAST:16}
    \tseqfrj{S}{T}  &&  
    \ruleIMPi\;\eqref{exAST:15}
  \end{align}

  \vspace{4ex}
  \caption{The $\FRJof{T}$-derivation $\Dcal_T$ of $T$ and the model $\Mod{\Dcal_T}$.}
  \label{fig:frjAST}
\end{figure}

\begin{example}\label{ex:frjKP}
  Another significant example is the $\FRJof{K}$-derivation $\Dcal_K$
  of the instance
  $K=(\neg a \imp b\lor c)\imp (\neg a \imp b) \lor (\neg a \imp c)$
  of Kreisel-Putnam principle~\cite{ChaZak:97} shown in
  Fig.~\ref{fig:frjKP}.  Differently from the formulas $S$ and $T$ in
  the previous example, the formula $K$ contains three propositional
  variables, which give rise to eight axioms.  In the figure, we only
  consider the sequents needed to prove the goal.
 \EndEs
\end{example}


\begin{figure}[t]
  \centering
  \[
  \begin{array}{l}
    K \;=\; K_0\imp  K_1
    \qquad K_0\;=\;  \neg a \imp b\lor c
    \qquad K_1 \;=\; (\neg a \imp b)\lor (\neg a \imp c)
    \\[.5ex]
    \Sfl{K} \;=\; 
    \{\, K_0,\, \neg a,\, a,\, b,\, c\,\}                     
    \qquad
    \Sfr{K} \;=\; 
    \{\,K,\, K_1,\,  \neg a \imp b,\, \neg a \imp c,\,
    \,  a,\, b,\, c ,\,\bot\,\}    
  \end{array}
  \]  
  \leqnomode 
  \setcounter{equation}{0} 
  \begin{align}\small
    \label{exKP:1}
    \tseqfrji{}{a,\,b,\,c,\,K_0,\,\neg a  }{\bot}  && \ruleAXI
    \\
    \label{exKP:2}
    \tseqfrji{}{b,\,c,\,K_0,\,\neg a  }{a}  && \ruleAXI
    \\[1ex]
    \hline
    \label{exKP:3}
    \tseqfrji{a}{b,\,c,\,K_0,\,\neg a  }{\neg a}  &&
    \ruleIMPi\;\eqref{exKP:1}                                                                 
    \\
    \label{exKP:4}
    \tseqfrj{c,\,\neg a }{b}  &&
    \ruleJOINA\;\eqref{exKP:2}
    \\
    \label{exKP:5}
    \tseqfrj{b,\,\neg a }{c}  &&
    \ruleJOINA\;\eqref{exKP:2}
    \\[1ex]
    \hline
    \label{exKP:6}
    \tseqfrj{a,\,b,\,c,\,K_0 }{\bot}  &&
    \ruleJOINA\;\eqref{exKP:3}
    \\
    \label{exKP:7}
    \tseqfrji{}{c,\,K_0}{\neg a\imp b}  &&
    \ruleIMPni\;\eqref{exKP:4}
    \\
    \label{exKP:8}
    \tseqfrji{}{b,\,K_0}{\neg a\imp c}  &&
    \ruleIMPni\;\eqref{exKP:5}
    \\[1ex]
    \hline
    \label{exKP:9}
    \tseqfrji{}{b,\,c,\,K_0 }{\neg a}  &&
    \ruleIMPni\;\eqref{exKP:6}
    \\[1ex]
    \hline
    \label{exKP:10}
    \tseqfrj{K_0}{K_1}  &&
    \ruleJOINO\;\eqref{exKP:7}\;\eqref{exKP:8}\;\eqref{exKP:9}
    \\[1ex]
    \hline
    \label{exKP:11}
    \tseqfrj{ K_0  }{K}   &&
    \ruleIMPi\;\eqref{exKP:10}
  \end{align}

  \vspace{4ex}
  \caption{The $\FRJof{K}$-derivation $\Dcal_K$ of $K$ and the model $\Mod{\Dcal_K}$.}
  \label{fig:frjKP}
\end{figure}


\subsection{Countermodels and soundness of $\FRJof{G}$}\label{sec:models}

We show that we can extract from an $\FRJof{G}$-derivation $\Dcal$ of
$G$ a countermodel for $G$.  We call \emph{p-sequent} (\emph{prime
  sequent}) of $\Dcal$ any regular sequent occurring in $\Dcal$ which
is either an axiom or the conclusion of a join rule.  Given an
$\FRJof{G}$-derivation $\Dcal$ of $G$ let
$\Mod{\Dcal}\,=\,\stru{\,\PS{\Dcal},\,\leq,\,\rho,V\,}$ be the
structure where:
\begin{itemize}
\item $\PS{\Dcal}$ is the set of all p-sequents occurring in $\Dcal$;
\item for every $\s_1,\s_2\in\PS{\Dcal}$, $\s_1\leq\s_2$ iff
  $\s_2\mapstos \s_1$;
\item $\rho$ is the minimum of $\PS{\Dcal}$ w.r.t. $\leq$;
\item $V$ maps $\s\in \PS{\Dcal}$ to the set
  $V(\s)\,=\,\Lhs{\s}\cap\PV$.
\end{itemize}
One can check that $\Mod{\Dcal}$ is a model.  In particular, since the
root sequent of $\Dcal$ is regular, there exists $\rho\in\PS{\Dcal}$
such that $\s_p\mapstos \rho$, for every $\s_p\in\PS{\Dcal}$, hence
$\rho$ is the minimum of $\PS{\Dcal}$ w.r.t. $\leq$.  Moreover, by
Lemma~\ref{lemma:lhs}\ref{lemma:lhs:3} and~\ref{propClo:5},
$\s_1\leq\s_2$ implies $V(\s_1)\subseteq V(\s_2)$, hence the
definition of $V$ is sound.  
We call $\Mod{\Dcal}$ the \emph{model extracted from $\Dcal$}.

For every regular sequent $\s$ occurring
in $\Dcal$, let $\phi(\s)$ be the p-sequent {immediately above $\s$},
namely:
\[
\begin{array}{lcl}
  \phi(\s) \;=\; \s_p  
  &\quad\mbox{iff}\quad&
  \s_p\in\PS{\Dcal}\;\mbox{and}\; \s_p\mapstos \s\;\mbox{and}
  \\
  && 
  \mbox{for every $\s'_p\in\PS{\Dcal}$, $\s_p\mapstos
    \s'_p \mapstos \s$ implies $\s'_p=\s_p$}.
\end{array}
\]
It is easy to check that:

\begin{itemize}
\item p-sequents are fixed points of $\phi$, i.e., $\s_p\in\PS{\Dcal}$
  implies $\phi(\s_p)=\s_p$;

\item $\phi$ is a surjective map from the set of regular sequents of
$\Dcal$ onto $\Mod{\Dcal}$; 

\item
 if $\s_1$ and $\s_2$ are regular
and $\s_1\mapstos \s_2$, then $\phi(\s_2) \leq \phi(\s_1)$.
\end{itemize}

\noindent
We call $\phi$ the \emph{map associated with $\Dcal$}.
The following lemma is the key to prove the soundness
properties~\ref{prop:soundr} and \ref{prop:soundi} of $\FRJof{G}$
stated at the beginning of Sec.~\ref{sec:FRJ};
by $\Sfm{C}$ we denote the set $\Sf{C}\setminus\{C\}$.

\begin{lemma}\label{lemma:soundFRJ}
  Let $\Dcal$ be an $\FRJof{G}$-derivation of $G$, let
  $\Mod{\Dcal}$ be the model extracted from $\Dcal$ and $\phi$ the map associated with $\Dcal$. 
For every sequent
  $\s$ occurring in $\Dcal$:

  \begin{enumerate}[label=(\roman*),ref=(\roman*)]
  \item\label{lemma:soundFRJ:1} if $\s= \seqfrj{\G}{C}$, then $\phi(\s)\forcing\G$ and
    $\phi(\s) \nforcing C$;

  \item\label{lemma:soundFRJ:2} if $\s= \seqfrji{\Sigma}{\Theta}{C}$, let $\s_p\in\PS{\Dcal}$
    such that~$\s\mapsto\s_p$ and $\s_p\forcing\Sigma\cap\Sfm{C}$;
    then $\s_p\nforcing C$.\qed
  \end{enumerate}  
\end{lemma}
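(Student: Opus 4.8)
The plan is to prove the two statements simultaneously by induction on the height of the subderivation of $\Dcal$ rooted at $\s$, splitting on the last rule applied (equivalently, on the shape of $\s$). Since $\phi$ fixes p-sequents, statement~\ref{lemma:soundFRJ:1} specialises, for a p-sequent $\b$, to $\b\forcing\Lhs{\b}$ and $\b\nforcing\Rhs{\b}$, and this is the form I will feed back into the induction. The easy cases I would dispatch first. For a regular axiom ($\ruleAXR$) we have $\phi(\s)=\s$ and the claim is immediate from $V(\s)=\Lhs{\s}\cap\PV$ and $F\in\Prime\setminus\Lhs{\s}$. For an irregular axiom ($\ruleAXI$) the stable set is empty, so the hypothesis of~\ref{lemma:soundFRJ:2} is vacuous, and $\s_p\nforcing F$ holds because $\s\mapsto\s_p$ gives $\Lhs{\s_p}\subseteq\Clo{\Lhs{\s}}$ by Lemma~\ref{lemma:lhs}\ref{lemma:lhs:3}, whence $V(\s_p)\subseteq\Lhs{\s}\cap\PV$ by~\ref{propClo:5} and $F\notin\Lhs{\s}\cap\PV$ (if $F=\bot$ the claim is trivial). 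The unary regular rules ($\land$ and $\ruleIMPi$ on regular sequents) satisfy $\phi(\s)=\phi(\s')$ for their premise $\s'$, so~\ref{lemma:soundFRJ:1} reduces to the induction hypothesis on $\s'$: for $\land$ via $\phi(\s')\nforcing A_k$, and for $\ruleIMPi$ by noting that $A\in\Clo{\G}$ and $\phi(\s')\forcing\G$ force $A$ (by~\ref{propClo:1}) while $\phi(\s')\nforcing B$, so $\phi(\s')$ itself witnesses $\phi(\s)\nforcing A\imp B$.

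The irregular-conclusion cases of~\ref{lemma:soundFRJ:2} are handled by transferring the forcing hypothesis to the premises and producing explicit witnesses. For irregular $\land$ and $\lor$ I use $\Sfm{A_k}\subseteq\Sfm{A_1\land A_2}$ and $\Sfm{C_i}\subseteq\Sfm{C_1\lor C_2}$, so that $\s_p\forcing\Sigma\cap\Sfm{C}$ entails the corresponding hypothesis for each premise, and conclude by the induction hypothesis. For irregular $\ruleIMPi$ I take the witness $\b=\s_p$: from $A\in\Clo{\Sigma\cup\Lambda}$ and the inclusions $\Sf{A}\subseteq\Sfm{A\imp B}$, $\Sigma\cap\Sfm{B}\subseteq(\Sigma\cup\Lambda)\cap\Sfm{A\imp B}$, properties~\ref{propClo:1} and~\ref{propClo:2} give $\s_p\forcing A$, while the induction hypothesis~\ref{lemma:soundFRJ:2} on the premise gives $\s_p\nforcing B$. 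For $\ruleIMPni$ I instead take $\b=\phi(\s_1)$, which satisfies $\phi(\s_1)\geq\s_p$ because $\phi(\s_1)\mapstos\s_1\mapstoz\s\mapsto\s_p$; here the induction hypothesis~\ref{lemma:soundFRJ:1} on the regular premise $\s_1$ together with $A\in\Clo{\G}$ and~\ref{propClo:1} yields $\phi(\s_1)\forcing A$ and $\phi(\s_1)\nforcing B$.

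The heart of the argument is the join case, where $\s=\seqfrj{\G}{C}$ is a p-sequent and $\phi(\s)=\s$; both rules ultimately rest on first proving $\s\forcing\G$, which I would establish by a nested induction on $\size{D}$ for $D\in\G$. Variables are immediate from $V(\s)$. For an implication $D=Y\imp Z\in\G$ I must check, for every $\b\geq\s$ in $\Mod{\Dcal}$, that $\b\nforcing Y$ or $\b\forcing Z$. If $\b\neq\s$, then $\b$ is a p-sequent of strictly smaller height with $\b\mapstos\s$, so Lemma~\ref{lemma:lhs}\ref{lemma:lhs:3} gives $Y\imp Z\in\Lhs{\s}\subseteq\Clo{\Lhs{\b}}$; since the induction hypothesis supplies $\b\forcing\Lhs{\b}$, property~\ref{propClo:1} yields $\b\forcing Y\imp Z$ outright. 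If $\b=\s$, I exploit supportedness: by~\ref{sc:J2} (and the definition of $\Thimp$ as a restriction to $\Upsilon$) the antecedent is $Y=A_{k'}$ for some premise $\s_{k'}$, and since $\Sigma_{k'}\subseteq\Sigat\cup\Sigimp\subseteq\G$, every formula of $\Sigma_{k'}\cap\Sfm{A_{k'}}$ lies in $\G$ and is a proper subformula of $Y$, hence of size $<\size{D}$; the nested induction hypothesis then gives $\s\forcing\Sigma_{k'}\cap\Sfm{A_{k'}}$, and the induction hypothesis~\ref{lemma:soundFRJ:2} applied to $\s_{k'}$ (legitimate since $\s_{k'}\mapsto\s$ and $\s\in\PS{\Dcal}$) yields $\s\nforcing Y$. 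This proves $\s\forcing Y\imp Z$, hence $\s\forcing\G$. The nonforcing of the right formula is then easy: for $\ruleJOINA$, $C=F\in\Prime\setminus V(\s)$; for $\ruleJOINO$, I apply~\ref{lemma:soundFRJ:2} to the premises $\s_{m_1},\s_{m_2}$ with right formulas $C_1,C_2\in\Upsilon$, whose preconditions $\s\forcing\Sigma_{m_i}\cap\Sfm{C_i}$ now follow from the already-proved $\s\forcing\G$, giving $\s\nforcing C_1$ and $\s\nforcing C_2$.

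The step I expect to be the main obstacle is exactly this forcing of left implications at a join conclusion. Its difficulty is that $\s\forcing Y\imp Z$ quantifies over all worlds above $\s$, and these fall into two genuinely different regimes: the worlds strictly above $\s$, controlled by the closure bound of Lemma~\ref{lemma:lhs}\ref{lemma:lhs:3} and the outer induction hypothesis, versus the world $\s$ itself, which requires the supportedness condition~\ref{sc:J2} and the nested induction on formula size to discharge the hypothesis of~\ref{lemma:soundFRJ:2}. Once this case is secured, the remaining cases are routine, and the soundness properties~\ref{prop:soundr} and~\ref{prop:soundi} follow by specialising the lemma to the root p-sequent and to the p-sequents reachable from an irregular sequent.
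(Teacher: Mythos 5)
Your proposal is correct and follows essentially the same route as the paper's proof in Appendix~A: an outer induction on the height of $\s$ in $\Dcal$ with a case split on the last rule, the same treatment of the axiom, $\land$, $\lor$, $\ruleIMPi$ and $\ruleIMPni$ cases via Lemma~\ref{lemma:lhs} and the closure properties, and the same resolution of the join case by a nested induction on formula size that uses supportedness~\ref{sc:J2} to break the circularity between forcing the $\Sigma$-implications at the new world and refuting the right formulas of the premises there. Your packaging of the nested induction (on $\size{D}$ for $D\in\G$, splitting on $\b=\s$ versus $\b>\s$) is a minor reorganization of the paper's simultaneous claims (P2)/(P3) and changes nothing of substance.
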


The proof of the lemma
is outlined in~\cite{FerFio:2017} and is presented in full
details in Appendix~\ref{sec:soundFRJ}.
As an immediate consequence, we get:

\begin{theorem}\label{theo:soundFRJ}
  Let $\Dcal$ be an $\FRJof{G}$-derivation of $G$. Then, $\Mod{\Dcal}$
  is a countermodel for $G$.  
\end{theorem}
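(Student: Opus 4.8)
The plan is to read the statement off Lemma~\ref{lemma:soundFRJ}, which already contains all the semantic content; what remains is pure bookkeeping about the extracted model. First I would unfold the definitions. Since $\Dcal$ is an $\FRJof{G}$-derivation of $G$, its root sequent is, by definition, a regular sequent $\s=\seqfrj{\G}{G}$ for some (possibly empty) set $\G\subseteq\bG$. Applying Lemma~\ref{lemma:soundFRJ}\ref{lemma:soundFRJ:1} to this very sequent $\s$ (which certainly occurs in $\Dcal$) yields $\phi(\s)\forcing\G$ and, what we actually need, $\phi(\s)\nforcing G$. It then suffices to identify the world $\phi(\s)$ with the root $\rho$ of $\Mod{\Dcal}$.

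The key step is therefore to prove $\phi(\s)=\rho$. I would argue as follows. Because $\s$ is the root of the derivation tree $\Dcal$, every sequent occurring in $\Dcal$ descends to $\s$ through a downward chain of premise-to-conclusion steps; in particular every regular sequent $\s'$ of $\Dcal$ satisfies $\s'\mapstos\s$. The root $\rho$ of $\Mod{\Dcal}$ is a p-sequent, hence regular, so $\rho\mapstos\s$. Invoking the monotonicity property of $\phi$ recorded just before the lemma, namely that $\s_1,\s_2$ regular with $\s_1\mapstos\s_2$ implies $\phi(\s_2)\leq\phi(\s_1)$, with $\s_1=\rho$ and $\s_2=\s$, together with the fact that p-sequents are fixed points of $\phi$ (so $\phi(\rho)=\rho$), I obtain $\phi(\s)\leq\rho$. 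Since $\rho$ is by definition the minimum of $\PS{\Dcal}$ with respect to $\leq$, the reverse inequality $\rho\leq\phi(\s)$ holds automatically, and hence $\phi(\s)=\rho$.

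Combining the two steps gives $\rho=\phi(\s)\nforcing G$, i.e. $\Mod{\Dcal},\rho\nforcing G$, which is exactly the assertion that $\Mod{\Dcal}$ is a countermodel for $G$. I expect no genuine obstacle here: once Lemma~\ref{lemma:soundFRJ} is granted, the only real content is the identification $\phi(\s)=\rho$, and that is forced by the minimality of $\rho$ together with the two already-established properties of $\phi$. The single point deserving a little care is that the root sequent $\s$ need \emph{not} itself be a p-sequent (in the worked derivations of Figs.~\ref{fig:frjST}, \ref{fig:frjAST} and~\ref{fig:frjKP} it is typically the conclusion of an $\ruleIMPi$ step), which is precisely why one must pass through the associated map $\phi$ rather than read the forcing condition off $\s$ directly.
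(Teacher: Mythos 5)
Your proposal is correct and follows essentially the same route as the paper: apply Lemma~\ref{lemma:soundFRJ}\ref{lemma:soundFRJ:1} to the root sequent $\s=\seqfrj{\G}{G}$ and identify $\phi(\s)$ with the root $\rho$ of $\Mod{\Dcal}$. The only difference is that the paper asserts $\phi(\s)=\rho$ without comment, whereas you justify it explicitly via the monotonicity of $\phi$ and the minimality of $\rho$ — a welcome but inessential elaboration.
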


\begin{proof}
By definition, $\Dcal$ is an   $\FRJof{G}$-derivation of a regular sequent
$\s=\seqfrj{\G}{G}$.
Let $\phi$ be the map associated with $\Dcal$.
Then, $\phi(\s)$ is the root of $\Mod{\Dcal}$ and,
by Lemma~\ref{lemma:soundFRJ}\ref{lemma:soundFRJ:1},
we have
 $\phi(\s) \nforcing G$.
We conclude that $\Mod{\Dcal}$
  is a countermodel for $G$. \qed
\end{proof}

Accordingly, if $G$ is provable in $\FRJof{G}$, then $G$ is not valid,
and this proves the Soundness of $\FRJof{G}$ as stated in
Theorem~\ref{theo:FRJsound}.

\begin{figure}[t]
  \centering
  \hspace{2em}
  \begin{minipage}{25em}
    \center
    \[
    \AXC{}
    \UIC{$\s_\eqref{exST:2}$}
    \RightLabel{$\ruleJOINA$}
    \UIC{$\s_\eqref{exST:5}*$}
    \UIC{$\s_\eqref{exST:7}$}
    \AXC{}
    \UIC{$\s_\eqref{exST:1}$}
    \UIC{$\s_\eqref{exST:3}$}
    \RightLabel{$\ruleJOINA$}
    \UIC{$\s_\eqref{exST:6}*$}
    \UIC{$\s_\eqref{exST:8}$}
    \AXC{}
    \UIC{$\s_\eqref{exST:2}$}
    \UIC{$\s_\eqref{exST:4}$} 
    \AXC{}
    \UIC{$\s_\eqref{exST:1}$}
    \UIC{$\s_\eqref{exST:3}$}
    \RightLabel{$\ruleJOINA$}
    \UIC{$\s_\eqref{exST:6}*$}
    \UIC{$\s_\eqref{exST:8}$}
    \RightLabel{$\ruleJOINA$}
    \BIC{$\s_\eqref{exST:9}*$}
    \UIC{$\s_\eqref{exST:10}$}
    \RightLabel{$\ruleJOINO$}
    \TIC{$\s_\eqref{exST:11}*$}
    \RightLabel{\hspace{10em}\mbox{$*$: p-sequents}}
    \UIC{$\s_\eqref{exST:12}$}
    \DP
    \]     
  \end{minipage}
  \\[4ex]
  \begin{minipage}{12em}
    \center
    \begin{tikzpicture}
      \path[scale=.50,grow'=up,every node/.style={fill=gray!30,rounded corners},
      level 1/.style = {sibling distance = 30mm}, 
      edge from parent/.style={black,draw}]
      [level distance=20mm]  
      node{$\s_{\eqref{exST:11}}$: }
      child{
        node{$\s_{\eqref{exST:5}}$: }
      }
      child{
        node{$\s_{\eqref{exST:9}}$: }
        child{
          node{$\s_{\eqref{exST:6}}$: $p$}
        }
      }
      ;
    \end{tikzpicture}
  \end{minipage}
  \qquad
  \begin{minipage}{20em}
    $\s_{(k)}$ refers to the sequent at line~$(k)$ in Fig.~\ref{fig:frjST}
    \\[2ex]
    $\phi(\s)\;=\;\s$, for every p-sequent $\s$
    \\
    $\phi(\s_\eqref{exST:12})\;=\; \s_\eqref{exST:11}$
  \end{minipage}
  \vspace{4ex}
  \caption{The model $\Mod{\Dcal_S}$ (see Fig.~\ref{fig:frjST}).}
  \label{fig:countST}
\end{figure}

\begin{example}\label{ex:count}
  Let us consider the formulas $S$, $T$ and $K$ in
  examples~\ref{ex:frjNishimura} and~\ref{ex:frjKP}.  The models
  $\Mod{\Dcal_S}$, $\Mod{\Dcal_T}$, $\Mod{\Dcal_K}$ and the related
  maps $\phi$ are shown in Figs.~\ref{fig:countST}, \ref{fig:countAST}
  and~\ref{fig:countKP} respectively.  The bottom world is the root
  and $\s<\s'$ iff the world $\s$ is drawn below $\s'$.  For each
  $\s$, we display the set $V(\s)$. As an example, in
  Fig.~\ref{fig:countST} we have $V(\s_\eqref{exST:6})=\{p\}$, while
  $V(\s_\eqref{exST:5})=V(\s_\eqref{exST:9})=V(\s_\eqref{exST:11})=\emptyset$.
  \EndEs
\end{example}

\subsection{Termination}\label{sec:wg}

To conclude the presentation of $\FRJof{G}$,
we exhibit a \emph{weight} function $\wgname$ on sequents of
$\FRJof{G}$ such that, after the application of a rule, the weight of
sequents decreases; accordingly, the naive proof-search procedure
always terminates, even if we do not implement any redundancy check.
Let $\s_1$ and $\s_2$ be two $\FRJof{G}$-sequents and, for
$k\in\{1,2\}$, let  $\G_k=\Lhs{\s_k}$ and   $C_k=\Rhs{\s_k}$;
by  $\card{\_}$ we denote the cardinality function.
We show that the following properties holds:

\begin{enumerate}[label=(\arabic*), ref=(\arabic*)]
  
\item\label{propTermFRJ:1} $\s_1\mapstorz{\Rcal} \s_2$ implies
  $\card{\Clo{\G_2}\cap\Sfl{G}}\;\leq\; \card{\Clo{\G_1}\cap\Sfl{G}}$;

\item\label{propTermFRJ:2} $\s_1\mapstorz{\ruleIMPni} \s_2$ implies
  $\card{\Clo{\G_2}\cap\Sfl{G}}\;<\; \card{\Clo{\G_1}\cap\Sfl{G}}$;

\item\label{propTermFRJ:3} $\s_1\mapstorz{\Rcal} \s_2$ and $\Rcal$ is
  not a join rule imply $\size{G}-\size{C_2} < \size{G}-\size{C_1}$.

\end{enumerate}

\noindent
Let $\s_1\mapstorz{\Rcal} \s_2$.  By
Lemma~\ref{lemma:lhs}\ref{lemma:lhs:2} and~\ref{propClo:6}, we have
$\Clo{\G_2}\subseteq\Clo{\G_1}$, hence Point~\ref{propTermFRJ:1}
holds.  If $\Rcal=\,\ruleIMPni$, then $C_2= A\imp B$, where
$A\in\Sfl{G}$ and $A\in \Clo{\G_1}$ and $A\not\in\Clo{\G_2}$; this
proves Point~\ref{propTermFRJ:2}.  If $\Rcal$ is not a join rule, then
$\size{C_2} > \size{C_1}$, hence Point~\ref{propTermFRJ:3} holds.
Properties~\ref{propTermFRJ:1}--\ref{propTermFRJ:3} suggest that we
can define $\wg{\s}$ as the triple of non-negative integers:
\[
\begin{array}{l}
  \wg{\s}\,=\,\stru{\;\card{\,\Clo{\G}\,\cap\,\Sfl{G}\,}\,,\,\tp{\s}\,,\,\size{G}-\size{C}\;    }
  \qquad 
  \tp{\s} \;= \;
  \begin{cases}
    0\;\mbox{if $\s$ is  regular }
    \\
    1\;\mbox{otherwise}
  \end{cases}
  \\
  \G\;=\;\Lhs{\s}
  \qquad
  C\;=\;\Rhs{\s}
\end{array}
\]
Let $\prec$ be the standard lexicographic order on triples of
integers; we get:

\begin{lemma}\label{lemma:wg}
  $\s_1\mapsto \s_2$ implies $\stru{0,0,0}\preceq \wg{\s_2} \prec
  \wg{\s_1}$.  \qed
\end{lemma}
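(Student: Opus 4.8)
The plan is to reduce the statement to a single $\mapstoz$-step and then argue by a short case analysis on the applied rule, relying entirely on properties~\ref{propTermFRJ:1}--\ref{propTermFRJ:3} established just above. Since $\mapsto$ is the transitive closure of $\mapstoz$ and the lexicographic order $\prec$ is itself transitive, it suffices to prove that $\s_1\mapstoz\s_2$ implies $\wg{\s_2}\prec\wg{\s_1}$; chaining this strict inequality along any $\mapstoz$-path witnessing $\s_1\mapsto\s_2$ then yields $\wg{\s_2}\prec\wg{\s_1}$. The lower bound $\stru{0,0,0}\preceq\wg{\s_2}$ is immediate, because every component of $\wg{\s_2}$ is a non-negative integer; in particular the third coordinate is non-negative since $C=\Rhs{\s_2}\in\Sfr{G}\subseteq\Sf{G}$ gives $\size{C}\leq\size{G}$.

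For the single step, fix a rule $\Rcal$ with $\s_1\mapstorz{\Rcal}\s_2$ and set $\G_k=\Lhs{\s_k}$, $C_k=\Rhs{\s_k}$. By property~\ref{propTermFRJ:1} the first coordinate never increases, so $\card{\Clo{\G_2}\cap\Sfl{G}}\leq\card{\Clo{\G_1}\cap\Sfl{G}}$. If this inequality is strict we are done at once, so assume equality holds and compare the remaining coordinates. I would split into three subcases. If $\Rcal=\ruleIMPni$, property~\ref{propTermFRJ:2} forces the first coordinate to drop strictly, contradicting the assumed equality; hence this subcase cannot occur. If $\Rcal$ is a join rule ($\ruleJOINA$ or $\ruleJOINO$), the conclusion $\s_2$ is regular while every premise is irregular, so $\tp{\s_2}=0<1=\tp{\s_1}$ and the second coordinate strictly decreases. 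In all remaining cases (the two $\land$ rules, $\lor$, and the two forms of $\ruleIMPi$) the rule preserves the type of the sequent, whence $\tp{\s_2}=\tp{\s_1}$; since $\Rcal$ is then neither a join rule nor $\ruleIMPni$, property~\ref{propTermFRJ:3} gives $\size{G}-\size{C_2}<\size{G}-\size{C_1}$, so the third coordinate strictly decreases while the first two agree. In every surviving subcase $\wg{\s_2}\prec\wg{\s_1}$, which completes the single-step argument and hence the lemma.

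The only delicate point is the bookkeeping in the equal-first-coordinate case: one must verify, rule by rule, that whenever the first coordinate is preserved the type coordinate behaves correctly. This reduces to two inspections of Fig.~\ref{fig:FRJ}. First, the unique rule that changes a sequent's type is $\ruleIMPni$ (regular to irregular), and that rule is excluded here precisely by property~\ref{propTermFRJ:2}; all other non-join rules keep the type fixed, so the third-coordinate decrease supplied by property~\ref{propTermFRJ:3} settles them. Second, the join rules always pass from irregular premises to a regular conclusion, so they strictly lower the type coordinate. Note that the axiom rules $\ruleAXR$ and $\ruleAXI$ contribute no $\mapstoz$-edges at all, having no premises, and so need not be considered. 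Everything beyond these observations is routine.
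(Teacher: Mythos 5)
Your proof is correct and follows essentially the same route as the paper: the paper's (implicit) justification for this lemma is exactly properties~\ref{propTermFRJ:1}--\ref{propTermFRJ:3} together with the remark that the type component $\tp{\cdot}$ handles the join rules, and you have merely made the single-step reduction and the rule-by-rule case analysis explicit. The bookkeeping (non-negativity of all three coordinates, transitivity of $\prec$ along a $\mapstoz$-chain, and the observation that among non-join rules only $\ruleIMPni$ changes a sequent's type) is all accurate.
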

Note that the component $\tp{\s}$ accommodates the case where $\s_2$
is the conclusion of a join rule. Indeed, in this case $\s_2$ is
regular and  $\s_1$ is irregular, hence $\tp{\s_2}<\tp{\s_1}$.



\begin{figure}[t]
  \centering
  \hspace{2em}
  \begin{minipage}{25em}
    \center
  \[
  \AXC{}
  \UIC{$\s_\eqref{exAST:2}$}
  \AXC{}
  \UIC{$\s_\eqref{exAST:1}$}
  \UIC{$\s_\eqref{exAST:4}$}
  \RightLabel{$\ruleJOINA$} 
  \BIC{$\s_\eqref{exAST:5}*$}
  \UIC{$\s_\eqref{exAST:7}$}
  \AXC{}
  \UIC{$\s_\eqref{exAST:1}$}
  \UIC{$\s_\eqref{exAST:3}$}
  \RightLabel{$\ruleJOINA$} 
  \UIC{$\s_\eqref{exAST:6}*$}
  \UIC{$\s_\eqref{exAST:8}$}
  \RightLabel{$\ruleJOINA$} 
  \UIC{$\s_\eqref{exAST:10}*$}
  \UIC{$\s_\eqref{exAST:12}$}
  \AXC{}
  \UIC{$\s_\eqref{exAST:2}$}
  \AXC{$\vdots$}
  \noLine
  \UIC{$\s_\eqref{exAST:7}$ }
  \AXC{$\vdots$}
  \noLine
  \UIC{$\s_\eqref{exAST:8}$}
  \AXC{}
  \UIC{$\s_\eqref{exAST:2}$}
  \AXC{$\vdots$}
  \noLine
  \UIC{$\s_\eqref{exAST:8}$}
  \insertBetweenHyps{\hskip -3pt}
  \BIC{$\s_\eqref{exAST:9}$}
  \UIC{$\s_\eqref{exAST:11}$}
  \insertBetweenHyps{\hskip -3pt}
  \RightLabel{$\ruleJOINO$} 
  \QuaternaryInfC{$\s\eqref{exAST:13}*$}
  \UIC{$\s_\eqref{exAST:14}$}
  \RightLabel{$\ruleJOINO$}            
  \TIC{$\s_\eqref{exAST:15}*$}
  \RightLabel{\hspace{10em}\mbox{$*$: p-sequents}}       
  \UIC{$\s_\eqref{exAST:16}$}
  \DP
  \]
  \end{minipage}
  \\[4ex]
  \begin{minipage}{12em}
    \center
    \begin{tikzpicture}
      \path[scale=.50,grow'=up,every node/.style={fill=gray!30,rounded corners},
      level 1/.style = {sibling distance = 60mm},  
      edge from parent/.style={black,draw}]
      node{$\s_{\eqref{exAST:15}}$: } 
      [level distance=20mm]  
      child{
        node{$\s_{\eqref{exAST:10}}$:}
        child{
          node(a){$\s_{\eqref{exAST:6}}$: $p$} 
        }  
      }
      child{
        node(b){$\s_{\eqref{exAST:13}}$:}  
        child{
          node{$\s_{\eqref{exAST:5}}$:}
        }    
      }
      ;
      \draw (a) -- (b);   
    \end{tikzpicture}
  \end{minipage}
  \qquad
  \begin{minipage}{20em}
    $\s_{(k)}$ refers to the sequent at line~$(k)$ in Fig.~\ref{fig:frjAST}
    \\[2ex]
    $\phi(\s)\;=\;\s$, for every p-sequent $\s$
    \\
    $\phi(\s_\eqref{exAST:16})\;=\; \s_\eqref{exAST:15}$
  \end{minipage}
  
  \vspace{4ex}
  \caption{The model $\Mod{\Dcal_T}$ (see Fig.~\ref{fig:frjAST}).}
  \label{fig:countAST}
\end{figure}

We can exploit $\wgname$ to set a bound on the height of derivations
and of the extracted models.  As usual a \emph{branch} of an
$\FRJof{G}$-derivation is a sequence of sequents $\s_1,\dots, \s_m$
such that $\s_1\mapstoz \s_2\mapstoz\cdots\mapstoz \s_m$.

\begin{lemma}\label{lemma:branch}
  Let $\Dcal$ be an $\FRJof{G}$-derivation, let $\Bcal$ be a branch of
  $\Dcal$ and $N=\size{G}$.  Then:
  \begin{enumerate}[label=(\roman*), ref=(\roman*)]
  \item\label{lemma:branch:1} the length of the branch $\Bcal$ is
    $O(N^2)$;
    
  \item\label{lemma:branch:2} $\Bcal$ contains $N$ p-sequents at most.
  \end{enumerate}
\end{lemma}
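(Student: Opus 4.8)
The plan is to read off both bounds from the weight function $\wgname$ and Lemma~\ref{lemma:wg}, which guarantees that $\wg{\cdot}$ strictly decreases in the lexicographic order $\prec$ along every step $\mapstoz$ of a branch (since a single step is an instance of $\mapsto$). The key preliminary observation is that each coordinate of the weight triple ranges over a short interval of non-negative integers. The first coordinate $\card{\Clo{\Lhs\s}\cap\Sfl{G}}$ is bounded by $\card{\Sfl{G}}$; since $\card{\Sf{G}}\le N$ and $G\notin\Sfl{G}$ (the whole formula cannot be a proper, negative subformula of itself), we get $\card{\Sfl{G}}\le N-1$. The second coordinate $\tp\s$ lies in $\{0,1\}$, and the third coordinate $\size{G}-\size{\Rhs\s}$ lies in $\{0,\dots,N-1\}$ because $1\le\size{\Rhs\s}\le N$.

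For part~\ref{lemma:branch:1}, I would argue that along $\Bcal=\s_1\mapstoz\cdots\mapstoz\s_m$ Lemma~\ref{lemma:wg} yields $\wg{\s_m}\prec\cdots\prec\wg{\s_1}$, so all the triples $\wg{\s_1},\dots,\wg{\s_m}$ are pairwise distinct. Hence $m$ is at most the number of admissible triples, which is at most $N\cdot 2\cdot N=O(N^2)$, giving the quadratic bound at once.

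For part~\ref{lemma:branch:2} I would track only the first coordinate $w_1(\s)=\card{\Clo{\Lhs\s}\cap\Sfl{G}}$ and restrict attention to the p-sequents appearing in $\Bcal$, say $\s_{i_1},\dots,\s_{i_k}$ with $i_1<\cdots<i_k$. By property~\ref{propTermFRJ:1}, $w_1$ is non-increasing along every step, while by~\ref{propTermFRJ:2} it strictly decreases at every application of $\ruleIMPni$. The crucial claim is that at least one $\ruleIMPni$ is applied between any two consecutive p-sequents, so that $w_1(\s_{i_\ell})>w_1(\s_{i_{\ell+1}})$ for each $\ell$. Granting this, $w_1(\s_{i_1})>\cdots>w_1(\s_{i_k})$ is a strictly decreasing sequence of integers in $\{0,\dots,N-1\}$, which forces $k\le N$.

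The main obstacle is therefore establishing the separation claim by a case analysis of the rules of $\FRJof{G}$. I would show first that a p-sequent $\s_{i_\ell}$ is regular, and that every p-sequent other than a leaf is the conclusion of a join rule: a regular axiom ($\ruleAXR$) has no premises, so it can occur in $\Bcal$ only as $\s_1$, whence each $\s_{i_{\ell+1}}$ (with $i_{\ell+1}>1$) is a join conclusion and its immediate predecessor $\s_{i_{\ell+1}-1}$ is \emph{irregular}. Second, inspecting the rules, $\ruleIMPni$ is the \emph{unique} rule whose premise is regular and whose conclusion is irregular (the join rules go from irregular to regular, $\land$ and $\ruleIMPi$ preserve the type, $\lor$ stays irregular). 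Since $\s_{i_\ell}$ is regular and $\s_{i_{\ell+1}-1}$ is irregular, the branch must cross this boundary, so $\ruleIMPni$ is applied somewhere strictly between them, delivering the strict drop of $w_1$. Once this structural point is pinned down, the counting of triples and the estimate $\card{\Sfl{G}}\le N-1$ are routine, and both statements follow.
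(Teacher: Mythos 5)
Your proof is correct and follows essentially the same route as the paper: part~(i) by counting the admissible weight triples that a strictly $\prec$-decreasing sequence can visit, and part~(ii) by observing that between two consecutive p-sequents the branch must pass from a regular to an irregular sequent, which forces an application of $\ruleIMPni$ and hence a strict drop of the first weight component $\card{\Clo{\Lhs{\s}}\cap\Sfl{G}}$. The only cosmetic difference is that you make explicit the uniqueness of $\ruleIMPni$ as the regular-to-irregular rule and the fact that a regular axiom can only occur as the first element of the branch, both of which the paper leaves implicit.
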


\begin{proof}
  Let $\Bcal=\s_1,\dots, \s_m$
  where $\s_1\mapstoz \s_2\mapstoz\cdots\mapstoz \s_m$.  By
  Lemma~\ref{lemma:wg}, we have
  $\wg{\s_{m}}\prec\wg{\s_{m-1}}\prec\cdots\prec \wg{\s_1}$.  For
  $j\in\{1,\dots,m\}$, let $\wg{\s_{j}}=\stru{k_j,t_j,c_j}$.  Since
  $\card{\Sfl{G}}\leq N-1$, it holds that $0\leq k_j\leq N-1$,
  $t_j\in\{0,1\}$ and $0\leq c_j\leq N$, and this
  implies~\ref{lemma:branch:1}.

  Let $\s_h=\seqfrj{\Gamma_h}{C_h}$ and $\s_l=\seqfrj{\Gamma_l}{C_l}$
  be two distinct p-sequents of $\Bcal$, where $h<l$.  For
  $j\in\{h,l\}$, we have $\wg{\s_j}=\stru{k_j,0,c_j}$, where
  $k_j=\card{\Clo{\G_j}\cap\Sfl{G}}$.  Since $\s_l$ is the conclusion
  of a join rule, the sequent $\s_{l-1}$ is irregular.  Accordingly,
  there exists $u\in\{h,\dots,l-1\}$ such that $\s_u$ is regular and
  $\s_{u+1}$ is irregular. This means that $\s_{u+1}$ is obtained by
  an application of rule $\ruleIMPni$, hence the branch has the
  following form:
  \[
  \begin{array}{c}
    \s_h\,=\,\seqfrj{\Gamma_h}{C_h}
    \quad\mapstos\quad
    \s_u\,=\,\seqfrj{\G}{B}
    \quad\mapstorz{\ruleIMPni}\quad
    \s_{u+1}\,=\,   \seqfrji{}{\Theta}{A\imp B}
    \quad\mapsto\quad
    \s_l\,=\,\seqfrj{\Gamma_l}{C_l}
    \\[1ex]
    A\in\Sfl{G}\quad\mbox{and}\quad A\in\Clo{\G}\quad\mbox{and}\quad A\not\in\Clo{\Theta}
  \end{array}
  \]  
  By Lemma~\ref{lemma:lhs}\ref{lemma:lhs:3} and
  property~\ref{propClo:6} of closures, we get
  $\Clo{\G_l}\subseteq\Clo{\G_h}$, $A\in\Clo{\G_h}$ and
  $A\not\in\Clo{\G_l}$.  This implies
  $\card{\Clo{\G_l}\cap\Sfl{G}} < \card{\Clo{\G_h}\cap\Sfl{G}}$,
  namely $k_l < k_h\leq N$.  We conclude that the branch $\Bcal$
  cannot contain more than $N$ distinct p-sequents.  
\end{proof}

Let $\Dcal$ be an $\FRJof{G}$-derivation and let $\s$ be a sequent
occurring in $\Dcal$. The \emph{height of $\s$} (in $\Dcal$), denoted
by $\height{\s}$, is the maximum distance from $\s$ to an axiom
sequent of $\Dcal$, namely:
\[
\height{\s}\;=\;
\begin{cases}
  0\quad  \mbox{if $\s$  is an axiom sequent}  
  \\
  1\;+\;\max\{\,\height{\s'}~|~\mbox{$\s'$ occurs in $\Dcal$ and $\s'\mapsto \s$}\,\}\quad
  \mbox{otherwise}
\end{cases}
\]
The \emph{height of $\Dcal$}, denoted by $\height{\Dcal}$, is the
height of the root sequent of $\Dcal$.  Let $\K=\stru{P,\leq,\rho, V}$
be a model and let $\a\in P$; the \emph{height of $\a$} (in $\K$),
denoted by $\height{\a}$, is the maximum distance from $\a$ to a final
world of $\K$, namely:
\[
\height{\a}\;=\;
\begin{cases}
  0\quad  \mbox{if $\a$  is final}  
  \\
  1\;+\;\max\{\,\height{\b}~|~\mbox{$\b\in P$ and $\a < \b$}\,\}\quad
  \mbox{otherwise}
\end{cases}
\]
The \emph{height of $\K$}, denoted by $\height{\K}$, is the height of
$\rho$.  As an immediate consequence of Lemma~\ref{lemma:branch} and
of the definition of $\Mod{\Dcal}$ we get an upper bound on the height
of $\FRJof{G}$-derivations and on the height of extracted
countermodels:

\begin{theorem}\label{theo:height}
  Let $\Dcal$ be an $\FRJof{G}$-derivation and $N=\size{G}$.  Then:
  \begin{enumerate}[label=(\roman*), ref=(\roman*)]
  \item\label{prop:height:1} $\height{\Dcal}\,=\,O(N^2)$.

  \item\label{prop:height:2} $\height{\Mod{\Dcal}}\,\leq\, N$.  \qed
  \end{enumerate}
\end{theorem}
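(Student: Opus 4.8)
The plan is to read off both bounds directly from Lemma~\ref{lemma:branch}, since each notion of height is a ``longest chain'' quantity and the two parts of that lemma bound exactly the relevant chains: part~\ref{lemma:branch:1} controls the length of a branch (needed for the derivation), while part~\ref{lemma:branch:2} controls the number of p-sequents on a branch (needed for the model).

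For part~\ref{prop:height:1}, I would first observe that $\height{\Dcal}=\height{\s_0}$, where $\s_0$ is the root sequent, and that unfolding the recursive definition of $\height{\cdot}$ gives that this value is the length of the longest branch $\s_1\mapstoz\cdots\mapstoz\s_m=\s_0$ running from an axiom $\s_1$ up to the root. The definition uses $\mapsto$ rather than $\mapstoz$ in the recursion, but since $\height{\cdot}$ is strictly monotone along $\mapsto$ (if $\s'\mapsto\s$ and $\s$ is not an axiom then $\height{\s}\geq 1+\height{\s'}$) the maximum over $\mapsto$-predecessors is always attained at an immediate $\mapstoz$-predecessor, so the two readings coincide. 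Then Lemma~\ref{lemma:branch}\ref{lemma:branch:1} says every branch has length $O(N^2)$, whence $\height{\Dcal}=O(N^2)$.

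For part~\ref{prop:height:2}, recall that $\Mod{\Dcal}$ has carrier $\PS{\Dcal}$ ordered by $\s_1\leq\s_2$ iff $\s_2\mapstos\s_1$, and that $\height{\Mod{\Dcal}}=\height{\rho}$ is the length of the longest $\leq$-chain from the root $\rho$ up to a final (maximal) world. I would take such a maximal chain $\rho=\s^{(0)}<\s^{(1)}<\cdots<\s^{(k)}$, so that $\s^{(i+1)}\mapstos\s^{(i)}$ for each $i$; since the $\s^{(i)}$ are pairwise distinct p-sequents this is actually $\s^{(i+1)}\mapsto\s^{(i)}$. Concatenating the $\mapstoz$-paths witnessing each $\s^{(i+1)}\mapsto\s^{(i)}$ yields a single branch of $\Dcal$ running from $\s^{(k)}$ down to $\rho$ and passing through all of $\s^{(0)},\dots,\s^{(k)}$. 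These are all p-sequents, so by Lemma~\ref{lemma:branch}\ref{lemma:branch:2} the branch carries at most $N$ of them, forcing $k+1\leq N$, i.e.\ $\height{\Mod{\Dcal}}=k\leq N-1\leq N$.

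The main obstacle is the bridge in part~\ref{prop:height:2}: one must keep track of the fact that the model order is the \emph{reverse} of the derivation flow (larger worlds sit closer to the axioms/leaves, with the root $\rho$ as the minimum), and then check that a $\leq$-chain in $\Mod{\Dcal}$ genuinely embeds into a single branch rather than spreading across incomparable branches. Verifying that the concatenated $\mapstoz$-path is one continuous branch, and that the chain elements remain distinct p-sequents on it, is the only non-routine point; once this embedding is established, the p-sequent count of Lemma~\ref{lemma:branch}\ref{lemma:branch:2} closes the argument immediately.
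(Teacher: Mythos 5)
Your proposal is correct and follows exactly the route the paper intends: the paper gives no detailed argument, stating the theorem as an immediate consequence of Lemma~\ref{lemma:branch} and the definition of $\Mod{\Dcal}$, and your write-up is precisely that argument with the details (monotonicity of $\height{\cdot}$ along $\mapsto$, and the embedding of a $\leq$-chain of p-sequents into a single branch) filled in correctly.
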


In Sec.\ref{sec:minimality} we present further properties
on the depth of the extracted countermodels.

\begin{figure}[t]
  \centering
  \begin{tabular}{p{40ex}l}
    \hspace{-5ex}\begin{minipage}{40ex}
      \[
      \AXC{}
      \UIC{$\s_\eqref{exKP:2} $}
      \RightLabel{$\ruleJOINA$}
      \UIC{$\s_\eqref{exKP:4}*$}
      \UIC{$\s_\eqref{exKP:7}$}
      \AXC{}
      \UIC{$\s_\eqref{exKP:2}$}
      \RightLabel{$\ruleJOINA$}
      \UIC{$\s_\eqref{exKP:5}*$}
      \UIC{$\s_\eqref{exKP:8}$}
      \AXC{}
      \UIC{$\s_\eqref{exKP:1}$}
      \UIC{$\s_\eqref{exKP:3}$}  
      \RightLabel{$\ruleJOINA$}
      \RightLabel{$\ruleJOINA$}
      \UIC{$\s_\eqref{exKP:6}*$} 
      \UIC{$\s_\eqref{exKP:9} $} 
      \RightLabel{$\ruleJOINA$}
      \TIC{$\s_\eqref{exKP:10}* $}
      \RightLabel{\hspace{3em}\mbox{$*$: p-sequents}}       
      \UIC{$\s_\eqref{exKP:11}$}
      \DP
      \] 
    \end{minipage}
    &
    \begin{minipage}{10em}
      \begin{tikzpicture}
        \path[scale=.42,grow'=up,every node/.style={fill=gray!30,rounded corners},
        level 1/.style = {sibling distance = 45mm},  
        edge from parent/.style={black,draw}]
        node{$\s_{\eqref{exKP:10}}$: } 
       [level distance=25mm]  
        child{
          node{$\s_{\eqref{exKP:4}}$: $c$ }
        }
        child{
          node{$\s_{\eqref{exKP:5}}$: $b$ }
        }
        child{
          node{$\s_{\eqref{exKP:6}}$: $a,\,b,\,c$}
        }
        ;
      \end{tikzpicture}
    \end{minipage}
  \end{tabular}
  \vspace{4ex}
  \caption{The model $\Mod{\Dcal_K}$ (see Fig.~\ref{fig:frjKP}).}
  \label{fig:countKP}  
\end{figure}


\section{The  proof-search procedure and saturated Databases}
\label{sec:proofsearch}

The plain proof-search procedure outlined in Ex.~\ref{ex:frjNishimura}
suffers from the plethora of redundant sequents generated at each
iteration. To reduce the size of the database of proved sequents, we
introduce the notion of subsumption.

Given two sequents $\s_1$ and $\s_2$, we say that $\s_2$ \emph{subsumes}
$\s_1$, and we write $\s_1\sqsubseteq \s_2$ (or $\s_2\sqsupseteq
\s_1$) iff one of the following conditions hold:

\begin{enumerate}
\item $\s_1=\seqfrj{\G_1}{C}$ and  $\s_2=\seqfrj{\G_2}{C}$ and $\G_1\subseteq \G_2$;
\item $\s_1=\seqfrji{\Sigma}{\Theta_1}{C}$ and
  $\s_2=\seqfrji{\Sigma}{\Theta_2}{C}$ and $\Theta_1\subseteq
  \Theta_2$.
\end{enumerate}

\noindent
One can easily check that the relation $\sqsubseteq$ is a partial
order on the sets of $\FRJof{G}$-sequents, since it satisfies
reflexivity, antisymmetry and transitivity.  By $\s_1\sqsubset \s_2$
(or $\s_2\sqsupset \s_1$) we mean that $\s_1\sqsubseteq \s_2$ and
$\s_1\neq \s_2$.

\begin{lemma}\label{lemma:subsRules}
  Let 
  \[
  \AXC{$\s_1 \;\cdots\; \s_n$}
  \RightLabel{$\Rcal$}
  \UIC{$\s$}
  \DP
  \]
  be an instance of a rule of $\FRJof{G}$ and let $\s_1\sqsubseteq
  \s'_1$, \dots, $\s_n\sqsubseteq \s'_n$.  Then, 
  \[
  \AXC{$\s'_1 \;\cdots\; \s'_n$}
  \RightLabel{$\Rcal$}
  \UIC{$\s'$}
  \DP
  \]
  is an instance of $\Rcal$ in $\FRJof{G}$ where $\s\sqsubseteq \s'$. 
\end{lemma}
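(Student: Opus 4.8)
The plan is to argue by case analysis on the rule $\Rcal$, in each case exhibiting the primed instance and checking that its side conditions are inherited from the original and that $\s\sqsubseteq\s'$. Throughout I use that $\s_i\sqsubseteq\s'_i$ forces $\s'_i$ to share the right formula and the type (regular or irregular) of $\s_i$, that for regular premises the left set can only grow, and that for irregular premises the stable set $\Sigma$ is \emph{unchanged} while the component $\Theta$ only grows. The axiom rules $\ruleAXR$ and $\ruleAXI$ have no premises, so $\s'=\s$ and $\s\sqsubseteq\s'$ by reflexivity. For the two $\land$ rules the conclusion merely copies the left data of the premise while changing the right formula, so enlarging the premise (its left set, resp. its $\Theta$) enlarges the conclusion in exactly the same slot, giving $\s\sqsubseteq\s'$.

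For the rule $\lor$, write $\s'_k=\seqfrji{\Sigma_k}{\Theta'_k}{C_k}$ with $\Theta_k\subseteq\Theta'_k$ for $k\in\{1,2\}$. The side conditions $\Sigma_1\subseteq\Sigma_2\cup\Theta'_2$ and $\Sigma_2\subseteq\Sigma_1\cup\Theta'_1$ follow from the original ones since $\Theta_k\subseteq\Theta'_k$; the new conclusion has stable set $\Sigma_1\cup\Sigma_2$ (unchanged) and $\Theta$-component $\Theta'_1\cap\Theta'_2\supseteq\Theta_1\cap\Theta_2$, so $\s\sqsubseteq\s'$. For the regular $\ruleIMPi$, enlarging $\G$ to $\G'$ preserves the side condition via $A\in\Clo{\G}\subseteq\Clo{\G'}$ (monotonicity of closure, property~\ref{propClo:4}), and the conclusion grows only in its left set. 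For $\ruleIMPni$ I keep $\Theta$ fixed: from $\G\subseteq\G'$ and~\ref{propClo:4} one gets $\Theta\subseteq\Clo{\G}\cap\bG\subseteq\Clo{\G'}\cap\bG$ and $A\in\Clo{\G}\setminus\Clo{\Theta}\subseteq\Clo{\G'}\setminus\Clo{\Theta}$, so $\s'=\s$. For both join rules $\ruleJOINA$ and $\ruleJOINO$, subsumption leaves every $\Sigma_j$ (hence $\Sigat$, $\Sigimp$ and $\Upsilon$) untouched and enlarges each $\Theta_j$, so $\That=\bigcap_j\That_j$ and the restricted intersection $\Thimp$ can only grow; the conditions $\Sigma_i\subseteq\Sigma_j\cup\Theta_j$, the support condition on $\Sigimp$, and the conditions on $F$ resp. $C_1\lor C_2$ are all preserved, and comparing the two conclusions slot by slot yields $\Lhs{\s}\subseteq\Lhs{\s'}$, i.e. $\s\sqsubseteq\s'$.

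I expect the only genuinely delicate case to be the irregular $\ruleIMPi$, where the premise's $\Theta$-component is partitioned as $\Theta\cup\Lambda$ and $\Lambda$ is shifted into the stable set. Here the subsumed premise is $\s'_1=\seqfrji{\Sigma}{\Psi}{B}$ with $\Theta\cup\Lambda\subseteq\Psi$, so I must re-choose a partition of $\Psi$. The key move is to \emph{reuse the same} $\Lambda$: since $\Lambda\subseteq\Psi$, I set the shifted set to $\Lambda$ and the new $\Theta$-component to $\Psi\setminus\Lambda$. The side condition $A\in\Clo{\Sigma\cup\Lambda}$ is then literally unchanged, so the instance is legal and its conclusion is $\s'=\seqfrji{\Sigma,\Lambda}{\Psi\setminus\Lambda}{A\imp B}$, which has the same stable set $\Sigma\cup\Lambda$ as $\s$. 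To finish I check $\Theta\subseteq\Psi\setminus\Lambda$: indeed $\Theta\subseteq\Theta\cup\Lambda\subseteq\Psi$ and $\Theta\cap\Lambda=\emptyset$ (the original side condition) together give $\Theta\subseteq\Psi\setminus\Lambda$, whence $\s\sqsubseteq\s'$. This disjointness of $\Theta$ and $\Lambda$ is exactly what makes the re-partition go through, and it is the one point where the argument uses more than plain monotonicity.
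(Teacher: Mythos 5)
Your proof is correct and follows the same route as the paper: a case-by-case inspection of the rules, with the $\lor$ case argued exactly as in the paper's own proof (the paper only details that one case and leaves the rest to the reader). Your treatment of the irregular $\ruleIMPi$ case --- reusing the same $\Lambda$ and checking $\Theta\subseteq\Psi\setminus\Lambda$ via the disjointness condition $\Theta\cap\Lambda=\emptyset$ --- correctly fills in the one genuinely delicate step the paper leaves implicit.
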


\begin{proof}
 The assertion  can be proved  by inspecting the rules of
  $\FRJof{G}$. As an example, let us consider the case of the rule $\lor$:
  \[
  \AXC{$\s_1=\seqfrji{\Sigma_1}{\Theta_1}{C_1}$}
  \AXC{$\s_2=\seqfrji{\Sigma_2}{\Theta_2}{C_2}$}
  \RightLabel{$\lor$}
  \BIC{$\s=\seqfrji{\Sigma_1,\Sigma_2}{\Theta_1\cap\Theta_2}{C_1\lor C_2}$}
  \DP
  \qquad 
  \begin{minipage}{10em}
    $\Sigma_1\,\subseteq\,\Sigma_2\,\cup\,\Theta_2$
\\[0.5ex]
    $\Sigma_2\,\subseteq\,\Sigma_1\,\cup\,\Theta_1$     
  \end{minipage}
  \]
  Let $\s'_1$ and $\s'_2$ be such that $\s_1\sqsubseteq\s'_1$ and
  $\s_2\sqsubseteq\s'_2$.  This means that
  $\s'_1=\seqfrji{\Sigma_1}{\Theta'_1}{C_1}$ and
  $\s'_2=\seqfrji{\Sigma_2}{\Theta'_2}{C_2}$ with
  $\Theta_1\subseteq\Theta'_1$ and $\Theta_2\subseteq\Theta'_2$. Hence
  \[
  \AXC{$\s'_1=\seqfrji{\Sigma_1}{\Theta'_1}{C_1}$}
  \AXC{$\s'_2=\seqfrji{\Sigma_2}{\Theta'_2}{C_2}$}
  \RightLabel{$\lor$}
  \BIC{$\s'=\seqfrji{\Sigma_1,\Sigma_2}{\Theta'_1\cap\Theta'_2}{C_1\lor C_2}$}
  \DP
  \qquad 
  \begin{minipage}{10em}
    $\Sigma_1\,\subseteq\,\Sigma_2\,\cup\,\Theta'_2$
\\[.5ex]
    $\Sigma_2\,\subseteq\,\Sigma_1\,\cup\,\Theta'_1$     
  \end{minipage}
  \]
  is an instance of the rule $\lor$ and, since
  $\Theta_1\cap\Theta_2\subseteq\Theta'_1\cap\Theta'_2$, we get 
  $\s\sqsubseteq\s'$.
\end{proof}


\SetKwProg{Fn}{Function}{}{endFun}  
\SetKwInOut{Input}{input}
\SetKwInOut{Output}{output}
\SetKwInOut{Assumption}{assumption}
\SetKwFunction{FSearch}{FSearch}

\begin{function}[t]
  \DontPrintSemicolon
  \Fn{\FSearch{$G$}}{
    \Input{The goal formula $G$} 
    \Output{An $\FRJof{G}$-derivation of $G$ or the db of proved sequents $\DBof{G}$}
    $\DBof{G}\ass \emptyset$\hspace{16.8ex}\tcp{db of proved sequents}
    $\Ical\ass$all axiom sequents\hspace{1ex}\tcp{db of sequents proved in the last iteration}
    $\Pi\ass$all axiom rules\hspace{4.5ex}\tcp{db of generated derivations}
    \While{$\Ical\not=\emptyset$ and $\DBof{G}$ does not contain a sequent of
      the kind $\seqfrj{\G}{G}$\label{fun:mainLoopBegin}}{ 
      $\DBof{G}\ass \DBof{G}\cup\Ical$~~\tcp{update $\DBof{G}$}\label{fun:updateBDG}
      $\Ical\ass \emptyset$\\
      \For{every instance $\Rcal$ of a rule of $\FRJof{G}$ active in $\DBof{G}$}{ %
        let $\s$ be the conclusion of $\Rcal$\\
        \uIf{$\DBof{G}$ does not contain $\s'$ such that $\s\sqsubseteq\s'$\label{fun:mainLoopUpdate}} {%
          \tcp{update $\Ical$ and $\Pi$}
          $\Ical\ass \Ical\cup\{\s\}$\label{fun:updateBegin}\\
          let $\Dcal_1,\dots\Dcal_1$ be the $\FRJof{G}$-derivations of $\s_1,\dots,\s_n$ in $\Pi$\\ 
          let $\Dcal$ be the derivation of $\s$ built applying rule $\Rcal$ to $\Dcal_1,\dots,\Dcal_n$\\
          $\Pi\ass\Pi\cup\{\Dcal\}$\label{fun:updateEnd}
        } 
      } 
    }\label{fun:mainLoopEnd} 
    \tcp{$\Ical=\emptyset$ or $\DBof{G}$ contains a sequent of the kind $\seqfrj{\G}{G}$}
    \uIf{$\DBof{G}$ contains a sequent of the kind $\seqfrj{\G}{G}$}
    {return the derivation of $\seqfrj{\G}{G}$ in $\Pi$} 
    \uElse{return   $\DBof{G}$}
  }

  \caption{The forward proof-search procedure FSearch}\label{fun:FSearch}
\end{function}

In Fig.~\ref{fun:FSearch} we give a high-level description of the
proof-search procedure $\FSearch$ for $\FRJof{G}$ exploiting
subsumption.  We denote with $\DBof{G}$ the database storing the
sequents proved by the proof-search procedure.

Let us consider the following instance $\Rcal$   of a rule of $\FRJof{G}$
\[
\AXC{$\s_1 \;\cdots\; \s_n$}
\RightLabel{$\Rcal$}
\UIC{$\s$}
\DP
\]
We say that $\Rcal$ is \emph{active} in $\DBof{G}$ if
$\{\s_1,\dots,\s_n\}\subseteq \DBof{G}$.  $\FSearch{G}$ maintains the
database of proved sequents $\DBof{G}$, the database $\Pi$ of
generated derivations and the database $\Ical$ of sequents proved in
the last performed iteration. $\Ical$ and $\Pi$ are initialized
applying the axiom rules of $\FRJof{G}$. The main loop
(lines~\ref{fun:mainLoopBegin}-\ref{fun:mainLoopEnd}) performs the
iterations of the proof-search procedure.  At each iteration, every
rule instance of $\FRJof{G}$ active in $\DBof{G}$ is applied; the
databases $\Ical$ and $\Pi$ are updated if, as stated by condition at
line~\ref{fun:mainLoopUpdate}, such an application generates a sequent
not subsumed by elements in $\DBof{G}$ (\emph{forward
  subsumption}). We recall that applying rules $\ruleIMPi$,
$\ruleIMPni$ and join rules we assume the
restrictions~\ref{PS1}--\ref{PS4} stated in Sec.~\ref{sec:FRJ}.  The
main loop terminates when either a derivation of $G$ has been built,
and in this case such a derivation is returned, or no new sequent has
been generated in the last iteration, and in this case the database
$\DBof{G}$ is returned.  Note that, since the set of instances of the
rules of $\FRJof{G}$ is finite, $\FSearch{G}$ terminates.

Now, we introduce the key notion of saturated database.  Let $G$ be a
formula and let $\DBof{G}$ be a set of $\FRJof{G}$-sequents:
\begin{enumerate}[label=(DB\arabic*),ref=(DB\arabic*)]
\item\label{DB1} $\DBof{G}$ is a \emph{database for $G$} iff, for
  every $\s\in\DBof{G}$, $\proves{\FRJof{G}} \s$.

\item\label{DB2} A database $\DBof{G}$ for $G$ is \emph{saturated}
  iff, for every $\FRJof{G}$-sequent $\s$ such that
  $\proves{\FRJof{G}} \s$, there exists $\s'\in \DBof{G}$ such that
  $\s\sqsubseteq \s'$.
\end{enumerate}

\noindent
We say that a proof-search procedure is \emph{adequate} iff, for
every formula $G$, the proof-search for $G$ terminates yielding either an
$\FRJof{G}$-derivation of $G$ or a saturated databases for $G$.

\begin{theorem}\label{theo:fsearchAdequate}
  $\FSearch$ is an adequate proof-search procedure.
\end{theorem}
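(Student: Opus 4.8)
The plan is to prove the two halves of adequacy—termination, and correctness of each possible output—separately. Termination is immediate from the Finite Rule Property: since the left formulas of every sequent lie in $\bG$ and its right formula in $\Sfr{G}$, there are only finitely many $\FRJof{G}$-sequents. At each pass of the main loop the set $\Ical$ collects only sequents \emph{not} subsumed by the current $\DBof{G}$ (in particular, not already in it), and the next pass merges $\Ical$ into $\DBof{G}$; hence $\DBof{G}$ grows strictly until $\Ical=\emptyset$, and since $\DBof{G}$ is bounded the loop halts.

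If the loop exits because $\DBof{G}$ contains a sequent $\seqfrj{\G}{G}$, the returned object is a genuine $\FRJof{G}$-derivation of $G$: every sequent inserted into $\DBof{G}$ is recorded in $\Pi$ together with a derivation, obtained by applying the active rule to the stored derivations of its premises. The same invariant shows that, in the remaining case, the returned $\DBof{G}$ is a \emph{database for $G$}, i.e.~\ref{DB1} holds: each $\s\in\DBof{G}$ carries a derivation, so $\proves{\FRJof{G}}{\s}$.

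The heart of the proof is saturation~\ref{DB2} when $\FSearch$ returns $\DBof{G}$ with $\Ical=\emptyset$. First I would record the invariant produced by a terminating run: (*) for every rule instance active in the final $\DBof{G}$ and complying with~\ref{PS1}--\ref{PS4}, with conclusion $\s$, there is $\s'\in\DBof{G}$ such that $\s\sqsubseteq\s'$. This holds because, during the last execution of the \textbf{for} loop, $\DBof{G}$ is fixed and every such instance is examined; an unsubsumed conclusion would have entered $\Ical$, contradicting $\Ical=\emptyset$ at the exit test. I would then prove, by induction on the height of a derivation of $\s$, that every provable $\s$ is subsumed by an element of $\DBof{G}$. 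In the base case, axioms are inserted into $\DBof{G}$ during the first pass, so $\s\in\DBof{G}$. In the step, $\s$ is the conclusion of a rule $\Rcal$ from premises $\s_1,\dots,\s_n$, each subsumed by some $\s_i'\in\DBof{G}$ by the induction hypothesis; the PS-preserving form of Lemma~\ref{lemma:subsRules} described below then yields a PS-compliant instance of $\Rcal$ on $\s_1',\dots,\s_n'$—active in $\DBof{G}$—with conclusion $\s''$ satisfying $\s\sqsubseteq\s''$, whereupon (*) gives $\s'''\in\DBof{G}$ with $\s''\sqsubseteq\s'''$ and transitivity of $\sqsubseteq$ yields $\s\sqsubseteq\s'''$.

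The main obstacle is precisely securing this PS-preserving strengthening of Lemma~\ref{lemma:subsRules}: the instance of $\Rcal$ built on $\s_1',\dots,\s_n'$ must itself comply with~\ref{PS1}--\ref{PS4}, since $\FSearch$ fires only compliant instances and $\proves{\FRJof{G}}{}$ is defined through compliant derivations. For $\ruleIMPi$ the side condition $A\in\Clo{\Sigma\cup\Lambda}$, and hence minimality of $\Lambda$, depends only on $\Sigma$ and $\Lambda$, so the same minimal $\Lambda$ remains admissible for the enlarged $\Theta$-component of $\s_i'$, preserving~\ref{PS1}. For $\ruleIMPni$ one re-chooses, by finiteness, a \emph{maximal} $\Theta''\subseteq\Clo{\G'}\cap\bG$ extending the original $\Theta$ with $A\notin\Clo{\Theta''}$; this still produces a conclusion subsuming $\s$ and restores~\ref{PS2}. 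Finally~\ref{PS3}, \ref{PS4} and the join side conditions survive unchanged, since subsumption fixes every right formula and every $\Sigma$-set and hence leaves $\Upsilon=\{A_1,\dots,A_n\}$ untouched. With this version of Lemma~\ref{lemma:subsRules} the induction closes, $\DBof{G}$ is saturated, and adequacy follows.
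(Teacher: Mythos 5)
Your proof is correct and follows essentially the same route as the paper's: induction on the height of a derivation of $\s$, propagating subsumption through rule applications via Lemma~\ref{lemma:subsRules} (you phrase the final step as an invariant of the saturated state, whereas the paper tracks the iteration at which each subsuming sequent enters $\DBof{G}$, but the two framings are equivalent). The one substantive difference is your explicit verification that the rule instance built on the subsuming premises can itself be chosen to comply with~\ref{PS1}--\ref{PS4}: the paper's Lemma~\ref{lemma:subsRules} is silent on this, yet $\FSearch$ only fires compliant instances, so your strengthening (the same minimal $\Lambda$ for $\ruleIMPi$, a re-chosen maximal $\Theta''\supseteq\Theta$ for $\ruleIMPni$, and the observation that $\Upsilon$ and the $\Sigma$-sets are untouched for the join rules) fills a small gap that the paper's proof passes over silently and is exactly what is needed to close the inductive step.
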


\begin{proof}
  Let $G$ be a formula.  It is immediate to check that, if sequent
  $\s$ is added to $\DBof{G}$ at iteration $n\geq 0$, then $\Pi$
  contains a $\FRJof{G}$-derivation of $\s$ of height $n$; hence
  $\DBof{G}$ is a database for $G$. As a consequence, if $G$ is not
  provable in $\FRJof{G}$, $\FSearch{G}$ terminates returning the set
  $\DBof{G}$. To prove that $\DBof{G}$ is a saturated database, we
  show that, if $\Dcal$ is an $\FRJof{G}$-derivation of $\s$ and
  $\height{\Dcal}=n$, then, at some iteration $k\in\{1,\dots,n + 1\}$
  of the main loop, a sequent $\s'$ such that $\s\sqsubseteq\s'$ is
  added to $\DBof{G}$. If $\height{\Dcal}=0$ then $\s$ is an axiom
  sequent, hence $\s$ is added to $\DBof{G}$ at the first
  iteration. Let us assume that $\height{\Dcal}=h$ ($h>0$) and $\s$ is
  obtained by applying rule $\Rcal$ to premises $\s_1,\dots,\s_m$. By
  induction hypothesis there exist sequents $\s'_1,\dots,\s'_m$ and
  integers $k_1,\dots,k_m$ such that, for every $i\in\{0,\dots,m\}$,
  $k_i\leq h+1$, $\s'_i$ is added to $\DBof{G}$ at iteration $k_i$ and
  $\s_i\sqsubseteq\s'_i$. Let $K=\max\{k_1,\dots,k_n\}$ and let $\s'$
  be the conclusion of the application of rule $\Rcal$ having
  $\s'_1\dots\s'_n$ as premises. By Lemma~\ref{lemma:subsRules},
  $\s\sqsubseteq\s'$. Since $\Rcal$ is active at iteration $K$, we get
  that, at iteration $K+1$, either $\s'$ is added to $\DBof{G}$ or
  $\DBof{G}$ contains a sequent $\s''$ such that
  $\s\sqsubseteq\s'\sqsubseteq\s''$. In both cases we get the
  assertion.
\end{proof}

The proof-search procedure $\FSearch$ can be modified so to maintain
the database of proved sequents concise according with the following
definition:
\begin{enumerate}[label=(DB\arabic*),ref=(DB\arabic*),start=3]  
\item\label{DB3} a database $\DBof{G}$ for $G$ is \emph{compact} iff,
  for every $\s\in\DBof{G}$ there is no $\s'\in\DBof{G}$ s.t.~$\s'\sqsubset \s$.
\end{enumerate}

\noindent
Note that the databases in
Figs.~\ref{fig:frjST},~\ref{fig:frjAST} and~\ref{fig:frjKP} are compact.  
We show that a compact saturated database $\DBMof{G}$   for $G$ is the \emph{minimum}
saturated database for $G$, namely:
for every saturated  database $\DBof{G}$   for $G$, it holds that
$\DBMof{G}\subseteq \DBof{G}$.

\begin{lemma}\label{lemma:compactSatDB}
  Let $G$ be a formula and let $\DBof{G}^1$ be a compact saturated
  database for $G$.  For every saturated database $\DBof{G}^2$ for
  $G$, $\DBof{G}^1\subseteq \DBof{G}^2$.
\end{lemma}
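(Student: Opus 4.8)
The plan is to prove the inclusion elementwise, reducing it to a two-sided ``subsumption sandwich'' that collapses thanks to the antisymmetry of $\sqsubseteq$ and the compactness of $\DBof{G}^1$. First I would fix an arbitrary $\s_1\in\DBof{G}^1$ and aim at $\s_1\in\DBof{G}^2$. Since $\DBof{G}^1$ is a database for $G$ (by~\ref{DB1}), $\s_1$ is provable, i.e.\ $\proves{\FRJof{G}}\s_1$.

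Next I would feed $\s_1$ through the two saturation hypotheses in opposite directions. Applying saturation of $\DBof{G}^2$ (by~\ref{DB2}) to the provable sequent $\s_1$ yields some $\s_2\in\DBof{G}^2$ with $\s_1\sqsubseteq\s_2$. Because $\DBof{G}^2$ is itself a database for $G$, this $\s_2$ is provable as well; so applying saturation of $\DBof{G}^1$ to $\s_2$ yields some $\s_1'\in\DBof{G}^1$ with $\s_2\sqsubseteq\s_1'$. Transitivity of $\sqsubseteq$ then gives $\s_1\sqsubseteq\s_1'$, a subsumption between two members of the compact database $\DBof{G}^1$.

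The decisive step is to collapse this chain. Compactness of $\DBof{G}^1$ (by~\ref{DB3}) rules out $\s_1\sqsubset\s_1'$ for $\s_1,\s_1'\in\DBof{G}^1$; hence from $\s_1\sqsubseteq\s_1'$ we must conclude $\s_1=\s_1'$. Substituting back we obtain $\s_1\sqsubseteq\s_2\sqsubseteq\s_1$, and antisymmetry of the partial order $\sqsubseteq$ forces $\s_1=\s_2$. Since $\s_2\in\DBof{G}^2$, this gives $\s_1\in\DBof{G}^2$, whence $\DBof{G}^1\subseteq\DBof{G}^2$.

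I do not expect a real obstacle here: every step is an immediate appeal to a definition or to the order axioms. The only point that deserves care is the reading of compactness as an \emph{antichain} condition (no member of $\DBof{G}^1$ strictly subsumes another), so that $\s_1\sqsubseteq\s_1'$ with $\s_1\neq\s_1'$ genuinely contradicts~\ref{DB3}. Conceptually, the argument hinges on using the two defining properties of a saturated database in tandem, namely ``every stored sequent is provable'' (\ref{DB1}) and ``every provable sequent is subsumed by a stored one'' (\ref{DB2}), to sandwich $\s_1$ between elements that compactness and antisymmetry then identify.
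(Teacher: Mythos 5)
Your proof is correct and follows essentially the same route as the paper's: fix $\s_1\in\DBof{G}^1$, bounce it through the saturation of $\DBof{G}^2$ and then back through the saturation of $\DBof{G}^1$ to obtain $\s_1\sqsubseteq\s_2\sqsubseteq\s'_1$ with $\s'_1\in\DBof{G}^1$, collapse $\s_1=\s'_1$ by compactness, and conclude $\s_1=\s_2$ by antisymmetry. No differences worth noting.
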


\begin{proof}
  Let $\s_1\in \DBof{G}^1$ and let $\DBof{G}^2$ be a saturated
  database for $G$.  By~\ref{DB1}, we have $\proves{\FRJof{G}}\s_1$
  hence, by~\ref{DB2}, there exists $\s_2\in \DBof{G}^2$ such that
  $\s_1\sqsubseteq \s_2$.  By~\ref{DB1}, we have
  $\proves{\FRJof{G}}\s_2$ hence, by~\ref{DB2}, there exists
  $\s'_1\in \DBof{G}^1$ such that $\s_2\sqsubseteq \s'_1$.  By
  transitivity of $\sqsubseteq$, we get $\s_1\sqsubseteq \s'_1$.
  Since $\DBof{G}^1$ is compact, it is not the case that
  $\s_1\sqsubset \s'_1$ (see~\ref{DB3}), hence $\s_1=\s'_1$, which
  implies $\s_1\sqsubseteq \s_2\sqsubseteq \s_1$.  By antisymmetry of
  $\sqsubseteq$ we get $\s_1=\s_2$, hence $\s_1\in \DBof{G}^2$.  This
  proves $\DBof{G}^1\subseteq \DBof{G}^2$.  
\end{proof}

As an immediate consequence we get:

\begin{theorem}\label{theo:uniquetSatDB}
  For every formula $G$, there exists a unique compact saturated
  database $\DBMof{G}$ for $G$, which is the minimum saturated database for $G$.
\end{theorem}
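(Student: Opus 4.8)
The plan is to derive the whole statement from Lemma~\ref{lemma:compactSatDB} once a single compact saturated database has been produced; that lemma already supplies both the containment needed for minimality and the tool for uniqueness, so the only real work is exhibiting one compact saturated database.

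First I would prove existence by an explicit construction. Let $\Pcal = \{\,\s \mid \proves{\FRJof{G}}\s\,\}$ be the set of all provable $\FRJof{G}$-sequents. This set is itself a saturated database for $G$: it satisfies~\ref{DB1} by definition, and it satisfies~\ref{DB2} trivially by taking $\s' = \s$. Recalling that the number of $\FRJof{G}$-sequents is finite and that $\sqsubseteq$ is a partial order, I would let $\DBMof{G}$ be the set of $\sqsubseteq$-maximal elements of $\Pcal$. Then $\DBMof{G}$ is a database for $G$, since all its elements are provable, so~\ref{DB1} holds; it is saturated because, by finiteness, every $\s \in \Pcal$ lies below some $\sqsubseteq$-maximal element $\s' \in \DBMof{G}$, which gives~\ref{DB2}; and it is compact because if $\s,\s' \in \DBMof{G}$ satisfied $\s' \sqsubset \s$ then $\s'$ would fail to be maximal, contradicting its membership in $\DBMof{G}$, so~\ref{DB3} holds.

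Next I would read off minimality and uniqueness from Lemma~\ref{lemma:compactSatDB}. For minimality, given any saturated database $\DBof{G}$ for $G$, the lemma applied with first argument $\DBMof{G}$ and second argument $\DBof{G}$ yields $\DBMof{G}\subseteq\DBof{G}$; hence $\DBMof{G}$ is the minimum saturated database for $G$. For uniqueness, if $\DBof{G}'$ is any compact saturated database, then two applications of the lemma give $\DBMof{G}\subseteq\DBof{G}'$ and $\DBof{G}'\subseteq\DBMof{G}$, so $\DBof{G}'=\DBMof{G}$.

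The hard part is really only the existence step, and the single point that must not be overlooked there is its reliance on the finiteness of the set of $\FRJof{G}$-sequents: finiteness is exactly what guarantees that above every provable sequent sits a $\sqsubseteq$-maximal provable sequent, which is what makes $\DBMof{G}$ both well defined and saturated. Everything beyond this is a direct invocation of the previously established Lemma~\ref{lemma:compactSatDB}.
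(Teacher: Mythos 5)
Your proposal is correct and follows essentially the same route as the paper: the paper's proof also obtains $\DBMof{G}$ by taking a saturated database and discarding the subsumed (redundant) sequents, and then invokes Lemma~\ref{lemma:compactSatDB} for minimality and uniqueness. Your version merely makes the existence step more explicit --- instantiating the starting database as the set of all provable sequents and isolating the $\sqsubseteq$-maximal elements --- and correctly pinpoints the finiteness of the set of $\FRJof{G}$-sequents as the fact that makes this pruning yield a saturated database, a detail the paper leaves implicit.
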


\begin{proof}
  A compact saturated databases $\DBMof{G}$ for $G$ can be constructed
  by taking any saturated database for $G$ and removing the redundant
  sequents.  By Lemma~\ref{lemma:compactSatDB}, $\DBMof{G}$ is the
  minimum saturated database for $G$ and, by definition, it is
  unique. 
\end{proof}

\noindent 
In the proof-search procedure of Fig.~\ref{fun:FSearch}, we can keep
compact the database of proved sequents $\DBof{G}$ by tweaking the
update instruction at line~\ref{fun:updateBDG} as follows:
\begin{itemize}
\item if $\s$ is added, we delete from $\DBof{G}$ all the sequents
  $\s'$ such that $\s'\sqsubset\s$ and all the sequents $\s''$ such
  that $\s'\mapsto\s''$ (\emph{backward subsumption}).
\end{itemize}

\noindent
Clearly, the database built using this version of $\FSearch$ is
compact and, using Lemma~\ref{lemma:subsRules}, we can prove that the
corresponding version of $\FSearch$ is adequate.

\section{The calculus $\GBUof{G}$ and completeness of $\FRJof{G}$}
\label{sec:gbu}

In this section we present the sequent  calculus $\GBUof{G}$ to derive the
validity of a formula $G$ .  We show that,
 whenever $G$ is not provable in $\FRJof{G}$,
 we can exploit a saturated database  for $G$
to build  a $\GBUof{G}$-derivation of $G$, witnessing that $G$ is
valid.
This implies the completeness of  $\FRJof{G}$.

%
\begin{figure}[t]\small
  \[\small
  \begin{array}{c}
    \begin{minipage}{1.0\linewidth}
      For each sequent $\s$,   $\Lhs{\s}\subseteq\Sfl{G}$ and  $\Rhs{\s}\in\Sfr{G}$.
    \end{minipage}
    \\[4ex]
    \AXC{}
    \RightLabel{$\ruleAXID$}
    \UIC{$\seqgbu{A,\Psi}{A}$}
    \DP
    \hspace{4em}
    \AXC{}
    \RightLabel{$L \bot$}
    \UIC{$\seqgbu{\bot,\Psi }{C}$}
    \DP
    \\[4ex]
    \AXC{$\seqgbu{A,B,\Psi}{C}$}
    \RightLabel{$L \land$}
    \UIC{$\seqgbu{A\land B,\Psi}{C}$}
    \DP
\hspace{4em}
\AXC{$\seqgbu{\Psi}{A}$}
 \AXC{$\seqgbu{\Psi}{B}$}
\RightLabel{$R \land$}
\BIC{$\seqgbu{\Psi}{A\land B}$}
 \DP
\\[4ex]
\AXC{$\seqgbu{A,\Psi}{C}$}
 \AXC{$\seqgbu{B,\Psi}{C}$}
\RightLabel{$L \lor $}
\BIC{$\seqgbu{A\lor B,\Psi}{C}$}
 \DP
\hspace{4em}
\AXC{$\seqgbui{\Psi}{C_k}$}
  \RightLabel{$R \lor_k$}
  \UIC{$\seqgbu{\Psi}{C_1\lor C_2}$}
  \DP
\\[4ex]
\AXC{$\seqgbui{A\imp B,\Psi}{A}$}
\AXC{$\seqgbu{B,\Psi}{C}$}
\RightLabel{$L \imp $}
\BIC{$\seqgbu{A\imp B,\Psi}{C}$}
 \DP
\\[4ex]
\AXC{$\seqgbu{\Psi}{B}$}
  \RightLabel{$\ruleIMPRi$}
  \UIC{$\seqgbu{\Psi}{A\imp B}$}
  \DP
\quad A\in \Clo{\Psi}
\hspace{4em}
\AXC{$\seqgbu{A,\Psi}{B}$}
  \RightLabel{$\ruleIMPRni$}
  \UIC{$\seqgbu{\Psi}{A\imp B}$}
  \DP
\quad A\not\in \Clo{\Psi}
\\[4ex]
  \AXC{} 
     \RightLabel{$\ruleAXID$}
     \UIC{$\seqgbui{A,\Psi }{A}$}
     \DP
 \hspace{2em}
\AXC{$\seqgbui{\Psi}{A}$}
 \AXC{$\seqgbui{\Psi}{B}$}
\RightLabel{$R \land$}
\BIC{$\seqgbui{\Psi}{A\land B}$}
 \DP
\hspace{4em}
\AXC{$\seqgbui{\Psi}{C_k}$}
  \RightLabel{$R \lor_k$}
  \UIC{$\seqgbui{\Psi}{C_1\lor C_2}$}
  \DP
\\[3ex]
\AXC{$\seqgbui{\Psi}{B}$}
  \RightLabel{$\ruleIMPRi$}
  \UIC{$\seqgbui{\Psi}{A\imp B}$}
  \DP
\quad A\in \Clo{\Psi}
\hspace{4em}
\AXC{$\seqgbu{A,\Psi}{B}$}
  \RightLabel{$\ruleIMPRni$}
  \UIC{$\seqgbui{\Psi}{A\imp B}$}
  \DP
\quad A\not\in \Clo{\Psi}
\end{array}
\]
\caption{The calculus $\GBUof{G}$ ($k\in\{1,2\}$).}
\label{fig:GBU}
\end{figure}


Let $G$ be a formula.  The rules of the calculus $\GBUof{G}$ are
presented in Fig.~\ref{fig:GBU}.  In $\GBUof{G}$ we have two kinds of
sequents, where $\Psi\subseteq \Sfl{G}$ and $A\in\Sfr{G}$:
\begin{itemize}
\item \emph{regular sequents} of the form $\seqgbu{\Psi}{A}$;
\item \emph{irregular sequents}  of the form  $\seqgbui{\Psi}{A}$.
\end{itemize}

\noindent
The subscript $g$ on arrows is used to avoid confusion with
$\FRJof{G}$-sequents; for $\tau=\seqgbu{\Psi}{A}$
or $\tau=\seqgbui{\Psi}{A}$, we set $\Lhs{\tau}=\Psi$
and  $\Rhs{\tau}=A$.
The calculus  $\GBUof{G}$ consists of two axiom rules, namely $\ruleAXID$ and $L\bot$,
and left and right rules for each connective.
There are two rules to introduce an implication in the right,
that is $\ruleIMPRi$ and  $\ruleIMPRni$,
depending on the condition $A\in\Clo{\Psi}$.

 A $\GBUof{G}$-sequent $\tau$ is \emph{valid (in
  $\IPL$)} iff the formula $(\bigwedge \Lhs{\tau})\imp \Rhs{\tau}$ is valid
 (as usual, we set $\bigwedge\emptyset =\bot \imp \bot$). 
By $\proves{\GBUof{G}} \tau$ we mean that $\tau$ is provable in
$\GBUof{G}$.
We say that  $G$ is \emph{provable} in  $\GBUof{G}$,
denoted by $\proves{\GBUof{G}} G$,
iff $\proves{\GBUof{G}}\seqgbu{}{G}$.
A $\GBUof{G}$-derivation can be trivially
mapped to a derivation of the calculus $\GJ$ for
$\IPL$~\cite{TroSch:00}.
Basically, one has to erase the distinction between regular and irregular
sequents; in the translation of $\ruleIMPRi$, one has to add the antecedent
$A$ to the left of the premise.
Thus:

\begin{lemma}\label{lemma:GBUsound}
  If  $\proves{\GBUof{G}} \tau$, then  $\tau$ is valid.\qed
\end{lemma}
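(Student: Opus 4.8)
The plan is to translate every $\GBUof{G}$-derivation into a derivation in the standard sequent calculus $\GJ$ for $\IPL$, and then invoke the well-known soundness of $\GJ$ (see~\cite{TroSch:00}): if $\proves{\GJ}\seqs{\Psi}{A}$, then $(\bigwedge\Psi)\imp A$ is valid. I would define a map $(\cdot)^\flat$ sending both a regular sequent $\seqgbu{\Psi}{A}$ and an irregular sequent $\seqgbui{\Psi}{A}$ to the $\GJ$-sequent $\seqs{\Psi}{A}$; that is, $(\cdot)^\flat$ simply forgets the regular/irregular distinction. Since the validity of a $\GBUof{G}$-sequent $\tau$ is defined to be exactly the validity of $(\bigwedge\Lhs{\tau})\imp\Rhs{\tau}$, it suffices to show that $\proves{\GBUof{G}}\tau$ implies $\proves{\GJ}\tau^\flat$.

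I would establish the latter by induction on the height of the $\GBUof{G}$-derivation of $\tau$, checking that each rule of $\GBUof{G}$ becomes, under $(\cdot)^\flat$, either an instance of the corresponding $\GJ$-rule or a short derived inference in $\GJ$. Most cases are immediate: the two axiom rules $\ruleAXID$ map to the generalized initial sequents $\seqs{A,\Psi}{A}$ (derivable in $\GJ$ for arbitrary $A$), $L\bot$ maps to the $\GJ$ rule for $\bot$, and the left/right rules for $\land$ and $\lor$ as well as $L\imp$ map one-to-one to their $\GJ$-counterparts once the irregular markers are erased. Note in particular that the irregular left premise $\seqgbui{A\imp B,\Psi}{A}$ of $L\imp$ becomes precisely the premise $\seqs{A\imp B,\Psi}{A}$ required by $\GJ$. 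The rule $\ruleIMPRni$, whose premise already carries the antecedent $A$ on the left, translates directly to the $\GJ$ rule for right implication.

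The only genuinely non-routine case, and the main obstacle, is $\ruleIMPRi$, whose premise $\seqgbu{\Psi}{B}$ does \emph{not} contain $A$ on the left, whereas the $\GJ$ rule for $A\imp B$ demands a premise of the form $\seqs{A,\Psi}{B}$. Here the side condition $A\in\Clo{\Psi}$ plays no role in soundness; instead I would appeal to the admissibility of weakening in $\GJ$: from the translated premise $\seqs{\Psi}{B}$ one obtains $\seqs{A,\Psi}{B}$ by weakening, and then a single application of the $\GJ$ right-implication rule yields $\seqs{\Psi}{A\imp B}$. The same move handles the irregular instance of $\ruleIMPRi$. Combining all cases, the induction gives $\proves{\GJ}\tau^\flat$, and soundness of $\GJ$ then yields that $(\bigwedge\Lhs{\tau})\imp\Rhs{\tau}$ is valid, i.e.~that $\tau$ is valid.
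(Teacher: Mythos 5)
Your proposal is correct and follows essentially the same route as the paper: the paper proves this lemma by exactly the translation you describe, erasing the regular/irregular distinction to obtain a $\GJ$-derivation and, in the $\ruleIMPRi$ case, adding the antecedent $A$ to the left of the premise (your weakening step), then appealing to the soundness of $\GJ$. Your write-up merely makes explicit the induction and the case analysis that the paper leaves as a remark.
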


Accordingly, we get the 
Soundness of $\GBUof{G}$:

\begin{theorem}[Soundness of $\GBUof{G}$]\label{theo:GBUsound}
$\proves{\GBUof{G}} G$ implies  $G\in\IPL$.
\qed
\end{theorem}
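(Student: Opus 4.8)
The plan is to obtain the theorem as an immediate corollary of Lemma~\ref{lemma:GBUsound}. First I would unfold the definition of provability of the goal: by definition $\proves{\GBUof{G}} G$ means $\proves{\GBUof{G}}\seqgbu{}{G}$, namely the regular sequent $\tau=\seqgbu{}{G}$ (for which $\Lhs{\tau}=\emptyset$ and $\Rhs{\tau}=G$) is provable in $\GBUof{G}$. Applying Lemma~\ref{lemma:GBUsound} to $\tau$, I conclude that $\tau$ is valid; by the definition of validity of a $\GBUof{G}$-sequent this says precisely that the formula $(\bigwedge\emptyset)\imp G$ is valid in $\IPL$.

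Next I would eliminate the empty-conjunction convention. Since $\bigwedge\emptyset=\bot\imp\bot$, the validity of $\tau$ amounts to the validity of $(\bot\imp\bot)\imp G$. The key observation is that $\bot\imp\bot$ is forced at every world of every model: as $\bot$ is never forced, the forcing clause for $\imp$ is satisfied vacuously. Hence, for any model $\K$ with root $\rho$, the condition $\rho\forcing(\bot\imp\bot)\imp G$ reduces to the requirement that $\beta\forcing G$ for every $\beta$ with $\rho\leq\beta$, which in particular (taking $\beta=\rho$) yields $\rho\forcing G$. Therefore the validity of $(\bot\imp\bot)\imp G$ transfers to the validity of $G$, and I conclude $G\in\IPL$.

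I do not expect any genuine obstacle in this argument: the statement is a one-line consequence of Lemma~\ref{lemma:GBUsound} once the empty-antecedent convention is handled. The only point requiring a word of care is the remark that $\bot\imp\bot$ is an intuitionistic validity, so that carrying it as an additional antecedent does not weaken $G$; all the substantive content lives in Lemma~\ref{lemma:GBUsound}, whose proof relies on the simulation of $\GBUof{G}$-derivations inside $\GJ$ described in the paragraph preceding it.
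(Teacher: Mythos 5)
Your proof is correct and follows exactly the route the paper intends: the theorem is stated with an immediate \qed precisely because it is the instance $\tau=\seqgbu{}{G}$ of Lemma~\ref{lemma:GBUsound}, modulo the observation that the conventional antecedent $\bigwedge\emptyset=\bot\imp\bot$ is forced everywhere and so can be discharged. Your handling of that convention is the only step requiring care, and you handle it correctly.
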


Differently from $\GJ$, backward proof-search in $\GBUof{G}$ is
terminating.  To show this, we introduce a \emph{weight} function
$\wggbuname$ on $\GBUof{G}$-sequents such that, after the backward
application of a rule, the weight of sequents decreases.  
Given a  $\GBUof{G}$-sequent $\tau$, the size of $\tau$,
denoted by $\size{\tau}$,
is the number of logical symbols occurring in $\tau$.
Let us consider the instance
\[
\AXC{$\cdots$}
\AXC{$\tau'$}
\AXC{$\cdots$}
 \RightLabel{$\Rcal$}
   \TIC{$\tau$}
 \DP
\qquad
\begin{minipage}{10em}
$\Lhs{\tau'}\;=\;\Psi'$ 
\\[1ex]   
$\Lhs{\tau}\;=\;\Psi$ 
\end{minipage}
\]
of an application of a rule of $\GBUof{G}$,
where $\tau$ is the conclusion and   $\tau'$  any of the premises.
We prove the following properties:

\begin{enumerate}[label=(\arabic*), ref=(\arabic*)]
  
\item\label{propTermGBU:1}
$\card{\,\Sfl{G}\setminus \Clo{\Psi'}\,}\;\leq\;\card{\,\Sfl{G}\setminus \Clo{\Psi}\,}$.

\item\label{propTermGBU:2}
$\Rcal=\ \ruleIMPRni$
implies 
$\card{\,\Sfl{G}\setminus   \Clo{\Psi'}\,}\;<\;\card{\,\Sfl{G}\setminus \Clo{\Psi}\,}$.

\item\label{propTermGBU:3}
 If $\tau'$ is not the leftmost premise of $L \imp$, then
  $\size{\tau'} < \size{\tau}$.

\end{enumerate}

\noindent
Point~\ref{propTermGBU:1} follows by the fact
that $\Clo{\Psi}\subseteq\Clo{\Psi'}$,
which implies $\Sfl{G}\setminus \Clo{\Psi'}\,\subseteq \,\Sfl{G}\setminus \Clo{\Psi}$.
If $\Rcal$ is the rule $\ruleIMPRni$, we have
$\Psi'=\Psi\cup\{A\}$, with $A\not\in\Clo{\Psi}$.  Thus, $A\in
\Sfl{G}\setminus \Clo{\Psi}$ and, since $A\in\Clo{\Psi'}$, we have
$A\not\in \Sfl{G}\setminus \Clo{\Psi'}$. This proves Point~\ref{propTermGBU:2}.
Point~\ref{propTermGBU:3} can be easily checked.
By  points~\ref{propTermGBU:1}--\ref{propTermGBU:3},
we can define $\wggbu{\tau}$ as the triple of
non-negative integers:
\[
\begin{array}{c}
\wggbu{\tau}\:=\:\stru{\;\card{\,\Sfl{G}\,\setminus\,\Clo{\Psi\,}\,}\,,\,\tp{\tau}\,,\,\size{\tau}\;    }
\qquad
\tp{\tau} \;= \;
\begin{cases}
1\;\mbox{if $\tau$ is  regular }
\\
0\;\mbox{otherwise.}
\end{cases}
\\
\Psi\;=\; \Lhs{\tau}\
\end{array}
\]

We get (where $\prec$ is lexicographic order on triples
of integers):
\begin{lemma}\label{lemma:wggbu}
  Let $\Rcal$ be a rule of $\GBUof{G}$, let $\tau$ be the conclusion
  of $\Rcal$ and $\tau'$ any of the premises of $\Rcal$.  Then
  $\stru{0,0,0}\preceq \wggbu{\tau'} \prec \wggbu{\tau}$.  \qed
\end{lemma}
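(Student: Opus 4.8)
The plan is to fix an arbitrary instance of a rule of $\GBUof{G}$ with conclusion $\tau$ (left-hand side $\Psi$) and arbitrary premise $\tau'$ (left-hand side $\Psi'$), and to establish $\wggbu{\tau'}\prec\wggbu{\tau}$ by unwinding the lexicographic order using the three properties~\ref{propTermGBU:1}--\ref{propTermGBU:3} proved just above. The lower bound $\stru{0,0,0}\preceq\wggbu{\tau'}$ is immediate, since every component of $\wggbu{\tau'}$ is a non-negative integer (a cardinality, a value in $\{0,1\}$, and a size). For axiom rules ($\ruleAXID$, $L\bot$) there is no premise, so the statement is vacuous.

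First I would dispose of the first component. By~\ref{propTermGBU:1} we always have $\card{\Sfl{G}\setminus\Clo{\Psi'}}\leq\card{\Sfl{G}\setminus\Clo{\Psi}}$. Whenever this inequality is strict we get $\wggbu{\tau'}\prec\wggbu{\tau}$ immediately, regardless of the other two components; by~\ref{propTermGBU:2} this already settles both the regular and the irregular version of $\ruleIMPRni$. It therefore remains to treat the case in which the first components of $\wggbu{\tau'}$ and $\wggbu{\tau}$ coincide, and to show that there either the type component $\tp$ strictly decreases, or $\tp$ is preserved and the size component strictly decreases.

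The main obstacle is the behaviour of $\tp$, the only component that can \emph{increase} from $\tau$ to $\tau'$. Inspecting Fig.~\ref{fig:GBU}, the only rules whose premise is regular while the conclusion is irregular (so that $\tp{\tau'}=1>0=\tp{\tau}$) are the two $\ruleIMPRni$ rules; but for these~\ref{propTermGBU:2} forces a strict drop in the first component, hence they never fall into the equal-first-component case. For every other rule $\tp{\tau'}\leq\tp{\tau}$: the leftmost premise of $L\imp$ and the regular $R\lor_k$ turn an irregular premise into a regular conclusion without changing the left-hand side (so the first component is equal and $\tp$ strictly decreases, settling those instances), while all remaining rules preserve the regular/irregular status of the sequent.

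Finally, in the equal-first-component case with $\tp$ preserved, I would invoke~\ref{propTermGBU:3}: apart from the leftmost premise of $L\imp$ (already handled via the $\tp$ drop), every premise satisfies $\size{\tau'}<\size{\tau}$, so the third component strictly decreases and $\wggbu{\tau'}\prec\wggbu{\tau}$. The only routine care needed is to confirm, for the left rules $L\land$ and $L\lor$, that the first component of the premise does not exceed that of the conclusion; this is precisely~\ref{propTermGBU:1}, which rests on the closure facts $A\land B\in\Clo{\{A,B\}}$ and $A\lor B\in\Clo{\{A\}}$, $A\lor B\in\Clo{\{B\}}$ read off from the grammar defining $\Clo{\cdot}$. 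With these observations the lexicographic comparison goes through uniformly in every case.
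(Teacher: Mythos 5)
Your proposal is correct and follows essentially the same route as the paper: the lemma is a direct consequence of properties~\ref{propTermGBU:1}--\ref{propTermGBU:3} together with the observation that the type component $\tp$ absorbs the two cases ($R\lor_k$ with regular conclusion and the leftmost premise of $L\imp$) where an irregular premise sits under a regular conclusion, while $\ruleIMPRni$ --- the only place $\tp$ could grow --- is already settled by the strict drop in the first component. The only nitpick is that of the two $\ruleIMPRni$ rules only the irregular-conclusion one actually has $\tp{\tau'}>\tp{\tau}$ (the regular one preserves $\tp$), but this does not affect the argument since both are dispatched by~\ref{propTermGBU:2}.
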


\noindent
Note that the component $\tp{\tau}$ of $\wggbu{\tau}$ accommodates the
case where $\tau$ is the conclusion of rule $L \imp$ and $\tau'$ the
leftmost premise; in this case $\tau$ is regular and $\tau'$ is irregular,
hence $\tp{\tau'}<\tp{\tau}$. As a consequence of Lemma~\ref{lemma:wggbu},
and reasoning as in Lemma~\ref{lemma:branch}~\ref{lemma:branch:1},
we get a bound on the height of $\GBUof{G}$-trees:

\begin{theorem}\label{theo:gbufin}
Let $\Tcal$ be  a $\GBUof{G}$-tree
and $\tau$ the root sequent of $\Tcal$.
Then, the height of $\Tcal$ is $O(|\tau|^2)$.
  \qed
\end{theorem}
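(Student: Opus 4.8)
The plan is to bound the length of an arbitrary branch of $\Tcal$ and then observe that $\height{\Tcal}$ cannot exceed the maximum branch length, exactly mirroring the argument for Lemma~\ref{lemma:branch}\ref{lemma:branch:1}. Fix a branch $\tau=\tau_0,\tau_1,\dots,\tau_m$, where each $\tau_{j+1}$ is a premise of the rule whose conclusion is $\tau_j$. By Lemma~\ref{lemma:wggbu} we have $\stru{0,0,0}\preceq\wggbu{\tau_m}\prec\cdots\prec\wggbu{\tau_0}$, so the triples $\wggbu{\tau_j}$ are pairwise distinct and strictly decreasing in the lexicographic order. Hence $m$ is at most the number of distinct weight-triples that can occur along the branch, i.e.\ the product of the sizes of the ranges of the three components.

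Writing $\wggbu{\tau_j}=\stru{k_j,t_j,s_j}$, I would bound the three components separately. The middle one satisfies $t_j\in\{0,1\}$. For the first one, $0\le k_j\le\card{\Sfl{G}}$, and since $\card{\Sfl{G}}\le\size{G}$ this range is linear in the size of the goal. For the size component, the key observation is the subformula property of $\GBUof{G}$: inspecting every rule in Fig.~\ref{fig:GBU}, each formula occurring in a premise is a subformula of a formula occurring in the conclusion, so by induction every sequent in $\Tcal$ is built only from subformulas of the root $\tau$. Consequently the left and right formulas range over a fixed finite set of subformulas of $\tau$, every formula added on the left (and hence every strict drop of the first component) corresponds to a subformula of $\tau$, and $0\le s_j\le M$ for a bound $M$ on the size of any $\GBUof{G}$-sequent occurring in $\Tcal$.

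Combining the three ranges bounds $m$ by $O\!\left(\card{\Sfl{G}}\cdot M\right)$; the remaining task is to show this product is $O(|\tau|^2)$. Taking the maximum over all branches then yields $\height{\Tcal}=O(|\tau|^2)$.

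The main obstacle I expect is precisely the size component $s_j$: a sequent's size is the sum of the sizes of all its left formulas, and because left subformulas may be nested (e.g.\ iterated negations) this sum can a priori be superlinear in $|\tau|$, so the crude product bound $\card{\Sfl{G}}\cdot M$ risks overshooting $|\tau|^2$. The delicate point is therefore to argue that the size component does not inflate the count beyond quadratic. The robust way to do this is by an amortised accounting: by Point~\ref{propTermGBU:3} the only step that can strictly increase $\size{\cdot}$ is the leftmost premise of rule $L\imp$, and each such step strictly drops $\tp$ (a regular conclusion with an irregular premise); moreover, returning from an irregular to a regular sequent forces an application of $\ruleIMPRni$, which strictly drops the first component by Point~\ref{propTermGBU:2}. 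Hence the size-increasing steps are charged against the $O(\card{\Sfl{G}})$ drops of the first component, each increase being at most $O(|\tau|)$ since it is the size of a subformula of $\tau$; every other step strictly decreases $\size{\cdot}$. Balancing the total increase against the total decrease keeps the branch length quadratic in $|\tau|$, which is the crux of the estimate.
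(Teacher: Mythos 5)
Your proposal follows the paper's own route in outline: the paper proves this theorem in a single line, by appealing to Lemma~\ref{lemma:wggbu} and ``reasoning as in Lemma~\ref{lemma:branch}\ref{lemma:branch:1}'', i.e.\ the strictly decreasing lexicographic weight bounds every branch by the number of admissible weight triples. Where you genuinely depart from (and improve on) the paper is in the treatment of the third component. You are right that this is the delicate point: $\size{\tau_j}$ is not obviously $O(|\tau|)$, because a left-hand side is a set of subformulas of $\tau$ whose total size can be quadratic in $|\tau|$, so the crude product-of-ranges bound only yields $O(|\tau|^3)$. Your amortised accounting -- the size increases only at the leftmost premise of $L\imp$, each such increase is bounded by the size of a subformula of $\tau$, and two consecutive increases along a branch must be separated by an application of $\ruleIMPRni$ (the only rule taking an irregular conclusion back to a regular premise), which strictly drops the first component and, by the subformula property, can occur only $O(|\tau|)$ times -- is exactly what is needed to recover the stated quadratic bound, since the total decrease is then at most the initial size plus the total increase, i.e.\ $O(|\tau|^2)$, and every non-increasing step decreases the size by at least one. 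The paper does not spell out this balancing step; your version is the more careful one, and the two arguments otherwise rest on the same weight function and the same three monotonicity properties~\ref{propTermGBU:1}--\ref{propTermGBU:3}.
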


Accordingly, given a $\GBUof{G}$-sequent $\tau$, the number of
$\GBUof{G}$-trees having root sequent $\tau$ is finite, hence backward
proof-search in $\GBUof{G}$ always terminates.  In contrast, we cannot
set a bound on the depth of $\GJ$-trees, since in $\GJ$ we can apply
$\imp L$ an unbounded number of times.

We present the procedure $\Search$ (\emph{Backward Search}) to search
for a $\GBUof{G}$-derivation of a goal formula $G$, namely a
$\GBUof{G}$-derivation of $\seqgbu{}{G}$.  $\Search$ resembles a
standard backward proof-search procedure: starting from the sequent
$\seqgbu{}{G}$, rules of $\GBUof{G}$ are backward applied, giving
precedence to the invertible rules.  We recall that invertible rules
can be applied in any order without affecting completeness, while the
application of a non-invertible rule introduces a backtrack point: if
proof-search fails, one has to backtrack and try another way.  In the
design of $\Search$, we avoid backtracking by exploiting a saturated
database $\DBof{G}$ for $G$.  Let:
\[
\bGat\;=\;\Sfl{G}\cap\PV
\qquad
\bGimp\;=\;\Sfl{G}\cap \Fmimp
\qquad
\bG\;=\;\bGat\cup\bGimp
\]
A $\GBUof{G}$-sequent $\tau$ is \emph{critical} iff one of the following
conditions holds: 
\begin{itemize}
\item $\tau=\seqgbu{\Omega}{D}$ and $\Omega\subseteq\bG$ and either
  $D\in\Prime$ or $D=C_1\lor C_2$;
\item $\tau=\seqgbui{\Omega}{C_1\lor C_2}$ and $\Omega\subseteq\bG$.
\end{itemize}

\noindent
In backward proof-search, to a non-critical sequent
we can always apply one of the rules 
$L\land$, $R\land $, $L\lor$, $\ruleIMPRi$, $\ruleIMPRni$,
which are the invertible rules of $\GBUof{G}$.
On the other hand,
 to a critical sequent
 we can only apply rule $L\imp$ or $R\lor_k$,
 which are not invertible.
In this case, to choose
 the right route and avoid backtracking, we  query the  database $\DBof{G}$.
To extract from
$\DBof{G}$ the relevant information, we introduce the following \emph{evaluation
  relation} $\eval$:

\begin{itemize}
\item $\DBof{G}\eval\seqgbu{\Psi}{C}$ iff there exists
  $\seqfrj{\G}{C}\in\DBof{G}$ such that $\Psi\subseteq\Clo{\G}$;

\item $\DBof{G}\eval\seqgbui{\Omega}{C}$ iff there exists
  $\seqfrji{\Sigma}{\Theta}{C}\in\DBof{G}$ such that
  $\Sigma\,\subseteq\, \Omega\,\subseteq\,\Sigma\cup \Theta$.
\end{itemize}

\noindent
By $\DBof{G}\neval\s$, we mean that $\DBof{G}\eval\s$ does not hold.

The recursive function \Search   
satisfies the following specification:

\begin{itemize}

\item
  Let $\tau$ be a $\GBUof{G}$-sequent and $\DBof{G}$ a
  saturated database for $G$ such that:
  
  \begin{enumerate}[label=(BSr\arabic*),ref=(BSr\arabic*)]
  \item \label{search:ass1}
    $\DBof{G}\neval\tau$;
    
  \item \label{search:ass2}
if $\tau =\seqgbui{\Omega}{C} $, then $\Omega\subseteq\bG$.
  \end{enumerate}
  
  Then, $\Search(\s,\DBof{G})$
  builds a $\GBUof{G}$-derivation $\Tcal$ of $\tau$.
\end{itemize}

\noindent
The $\GBUof{G}$-derivation $\Tcal$ built by $\Search(\tau,\DBof{G})$
is defined by cases on $\tau$.

\begin{enumerate}[label=(B\arabic*),ref=(B\arabic*)]
\item\label{search:1}
  If $\tau$ is a $\GBUof{G}$-axiom.
  
  \smallskip
  Then, $\Tcal$ is the  $\GBUof{G}$-derivation
  only containing $\tau$.

\item\label{search:2}
  If $\tau$ is a non-critical sequent.

  \smallskip
Let
\[
\AXC{$\tau_1$}
\AXC{$\cdots$}
\AXC{$\tau_n$}
 \RightLabel{$\Rcal$}
   \TIC{$\tau$}
   \DP
\qquad 
\begin{minipage}{15em}
$n\in\{1,2\}$
\\[1ex]
$\Rcal\in\{\, 
L\land,\,R\land,\, L\lor,\, \ruleIMPRi,\,\ruleIMPRni
\,\}$
\end{minipage}
\]
be any instance of  a rule of $\GBUof{G}$  having conclusion $\tau$.

For every $1\leq j\leq n$,
  let $\Tcal_j = \Search(\tau_j,\DBof{G})$.
  Then, $\Tcal$ is the  $\GBUof{G}$-derivation
  \[
 \AXC{$\Tcal_1$}
 \noLine
\UIC{$\tau_1$}
\AXC{\dots}
\AXC{$\Tcal_n$}
 \noLine
\UIC{$\tau_n$}
 \RightLabel{$\Rcal$}
   \TIC{$\tau$}
   \DP
\]

\item\label{search:3}
  If $\tau\,=\,\seqgbui{\Omega}{C_1\lor C_2}$   (by~\ref{search:ass2}, $\Omega \subseteq\bG$).

\smallskip
 Choose any $C_k\in\{C_1,C_2\}$ such that $\DBof{G}\neval \seqgbui{\Omega}{C_k}$.

 Let $\Tcal_k\,=\,\Search(\,\seqgbui{\Omega}{C_k}\,,\,\DBof{G}\,)$.
Then, $\Tcal$ is  the  $\GBUof{G}$-derivation
 \[
 \AXC{$\Tcal_k$}
 \noLine
\UIC{$\seqgbui{\Omega}{C_k}$}
 \RightLabel{$R \lor_k $}
   \UIC{$\seqgbui{\Omega}{C_1\lor C_2}$}
   \DP
   \]

 \item\label{search:4} If $\tau\,=\,\seqgbu{\Omega}{F}$, with
   $F\in\Prime$ (note that here $\Omega\subseteq\bG$).

  \smallskip
  Choose any $A\imp B\in\Omega$ such that $\DBof{G}\neval \seqgbui{\Omega}{A}$.

  Let $\Omega_B \,=\, (\Omega\setminus\{A\imp B\})\cup\{B\}$ and
  \[
\Tcal_1\;=\;\Search(\,\seqgbui{\Omega}{A}\,,\,\DBof{G}\,)
\qquad
\Tcal_2\;=\;\Search(\,\seqgbu{\Omega_B}{F}\,,\,\DBof{G}\,)
 \]   
 Then, $\Tcal$ is the  $\GBUof{G}$-derivation
  \[
 \AXC{$\Tcal_1$}
 \noLine
\UIC{$\seqgbui{\Omega}{A}$}
\AXC{$\Tcal_2$}
 \noLine
\UIC{$\seqgbu{\Omega_B}{F}$}
 \RightLabel{$L \imp $}
   \BIC{$\seqgbu{\Omega}{F}$}
   \DP
 \]
 
\item\label{search:5} Otherwise $\tau\,=\,\seqgbu{\Omega}{C_1\lor C_2}$
  with $\Omega\subseteq\bG$.
 
  \smallskip
  Let $\Upsilon =\{\,A~|~A\imp B\in\Omega\,\}$.

  Choose any $Z\,\in\,\Upsilon\cup\{\,C_1,C_2\,\}$ 
 such that $\DBof{G}\neval \seqgbui{\Omega}{Z}$. 

The derivation $\Tcal$ is defined by cases on $Z$.

 \begin{itemize}
 \item $Z\in\Upsilon$.

   \smallskip
 Let $A\imp B\in \Omega$ such that
 $Z=A$, let   $\Omega_B\;=\;(\Omega\setminus\{A\imp B\})\cup\{B\}$ and
\[
\Tcal_1\;=\;\Search(\,\seqgbui{\Omega}{A}\,,\,\DBof{G}\,)
\quad
\Tcal_2\;=\;\Search(\,\seqgbu{\Omega_B}{C_1\lor C_2}\,,\,\DBof{G}\,)
 \] 
   Then, $\Tcal$ is the  $\GBUof{G}$-derivation
  \[
 \AXC{$\Tcal_1$}
 \noLine
\UIC{$\seqgbui{\Omega}{A}$}
\AXC{$\Tcal_2$}
 \noLine
\UIC{$\seqgbu{\Omega_B}{C_1\lor C_2}$}
 \RightLabel{$L \imp $}
   \BIC{$\seqgbu{\Omega}{C_1\lor C_2}$}
   \DP
  \]
 
\item $Z=C_k$ ($k\in\{1,2\}$).

\smallskip
Let $\Tcal_k\,=\,\Search(\,\seqgbui{\Omega}{C_k}\,,\,\DBof{G}\,)$.
Then, $\Tcal$ is the  $\GBUof{G}$-derivation
 \[
 \AXC{$\Tcal_k$}
 \noLine
\UIC{$\seqgbui{\Omega}{C_k}$}
 \RightLabel{$R \lor_k $}
   \UIC{$\seqgbu{\Omega}{C_1\lor C_2}$}
   \DP
  \]
\end{itemize}
\end{enumerate}

\noindent
An example of computation is discussed below.  The correctness of
$\Search$ is proved in Theorem~\ref{theo:search}.  The key issue is to
show that, if cases~\ref{search:1} and~\ref{search:2} do not hold,
then at least one of the statements described
in~\ref{search:3}--~\ref{search:5} can be performed.  We point out
that in \Search there are no backtrack points.
We also remark that only the irregular sequents of  $\DBof{G}$
are used by $\Search$.

\begin{example}\label{ex:gbu1}
  Let  $E$ be the following goal formula:
  \[
  \begin{array}{l}
    E\;=\; (p\land A \land B \land C) \,\imp\, D
    \\[1ex]
    A\;=\; p\imp (q_1\lor q_2 ) \quad
    B\;=\; q_1\imp D \quad
    C\;=\; q_2\imp D \quad
    D\;=\; r_1\lor r_2
    \\[1ex]
    \begin{array}{rcl}
      \Lhs{E}\,\cap\,(\PV\cup\Fmimp) &\;=\;&
      \{\,p,\, q_1,\, q_2,\, r_1,\, r_2,\,A,\,B,\,C\,\}
      \\[1ex]
      \Rhs{E} &=&
      \{\,p,\, q_1,\, q_2,\, r_1,\, r_2,\,D,\,E\,\}
    \end{array}
  \end{array}
  \]

  \noindent
Since $E$ is valid  and the proof-search procedure \FSearch of Fig.~\ref{fun:FSearch}
is adequate (see Theorem~\ref{theo:fsearchAdequate}),
the call $\FSearch{E}$   
returns a saturated database for $E$. 
We can exploit it to run the procedure \Search
and build a $\GBUof{E}$-derivation of $E$.
We consider the compact saturated 
database  $\DBMof{E}$ for $E$ containing 
the following irregular sequents
$\s_{\eqref{exE:1}},\dots,\s_{\eqref{exE:6}}$
(we omit the regular sequents since they are not used by $\Search$):

  \leqnomode 
  \setcounter{equation}{0} 
  \begin{align}
    \label{exE:1}
    \tseqfrji{}{q_1,\, q_2,\, r_1,\, r_2,\,A,\, B,\, C}{p}
    &&  \ruleAXI
    \\
    \label{exE:2}
    \tseqfrji{}{p,\, q_2,\, r_1,\, r_2,\,A,\, B,\, C}{q_1}
    &&  \ruleAXI
    \\
    \label{exE:3}
    \tseqfrji{}{p,\, q_1,\, r_1,\, r_2,\,A,\, B,\, C}{q_2}
    &&  \ruleAXI
    \\
    \label{exE:4}
    \tseqfrji{}{p,\, q_1,\, q_2,\, r_2,\,A,\, B,\, C}{r_1}
    &&  \ruleAXI
    \\
    \label{exE:5}
    \tseqfrji{}{p,\, q_1,\, q_2,\, r_1,\,A,\, B,\, C}{r_2}
    &&  \ruleAXI
    \\  
    \label{exE:6}
    \tseqfrji{}{p,\, q_1,\, q_2,\, A,\, B,\, C}{D}
    && \lor\;\eqref{exE:4}   \eqref{exE:5}         
  \end{align}
  
  \noindent 
    Proof-search starts from the   sequent
    $\seqgbu{}{E}$.
    Rules of $\GBUof{E}$ are backward applied
according with~\ref{search:2},
  until the critical sequent $\tau_1$ is obtained:
  \[
  \AXC{$\tau_1\;=\;\seqgbu{\Psi_1}{D}$}
  \doubleLine
  \RightLabel{$L\land$ (three times)}
  \UIC{$\seqgbu{p\land A\land B\land C}{D}$}
  \RightLabel{$\ruleIMPRni$}
  \UIC{$\seqgbu{}{E}$}
  \DP
  \qquad \Psi_1\;=\; p,\,A,\,B,\,C
  \]
  To continue the construction of the derivation, we have to bottom-up
  apply a rule of $\GBUof{E}$ to the critical sequent $\tau_1$, and we
  query $\DBMof{E}$ to choose the right way.  We have 5 possible
  choices:
  \begin{enumerate}[label=(c\arabic*),ref=(c\arabic*)]
  \item\label{exE:choice1} Apply rule $L\imp$ with main formula $A$;
    by~\ref{search:5}, this requires $\DBMof{E}\neval\seqgbui{\Psi_1}{p}$.
    
  \item\label{exE:choice2} Apply rule $L\imp$ with main formula $B$;
    by~\ref{search:5}, this requires
    $\DBMof{E}\neval\seqgbui{\Psi_1}{q_1}$.

  \item\label{exE:choice3} Apply rule $L\imp$ with main formula $C$;
    by~\ref{search:5}, this requires
    $\DBMof{E}\neval\seqgbui{\Psi_1}{q_2}$.

  \item\label{exE:choice4} Apply rule $R\lor_1$; 
by~\ref{search:5},  this requires $\DBMof{E}\neval\seqgbui{\Psi_1}{r_1}$.
  
  \item\label{exE:choice5} Apply rule $R\lor_2$; 
by~\ref{search:5}, this requires $\DBMof{E}\neval\seqgbui{\Psi_1}{r_2}$.
  \end{enumerate}
  
  \noindent
  We have:
  \[
  \begin{array}{l}
    \DBMof{E}\eval\seqgbui{\Psi_1}{q_1}\quad\mbox{(see $\s_{\eqref{exE:2}}$)}
    \qquad
    \DBMof{E}\eval\seqgbui{\Psi_1}{q_2}\quad\mbox{(see $\s_{\eqref{exE:3}}$)}
    \\[1ex]
    \DBMof{E}\eval\seqgbui{\Psi_1}{r_1}\quad\mbox{(see $\s_{\eqref{exE:4}}$)}
    \qquad
    \DBMof{E}\eval\seqgbui{\Psi_1}{r_2}\quad\mbox{(see $\s_{\eqref{exE:5}}$)}
    \\[1ex]
    \DBMof{E}\neval\seqgbui{\Psi_1}{p}
  \end{array}
  \]
  hence only choice~\ref{exE:choice1} can be performed.  Note that,
  selecting any of the other
  options~\ref{exE:choice2}--\ref{exE:choice5}, where the
  corresponding condition is not matched, proof-search fails.
  Complying with~\ref{exE:choice1}, we apply rule $L\imp$ with main
  formula $A$ and we continue until we get the critical sequents
  $\tau_2$ and $\tau_3$.
  
  \[
  \AXC{}
  \RightLabel{$\ruleAXID$}
  \UIC{$\seqgbu{p,\,A,\,B,\,C}{p}$}
  \AXC{$\tau_2\;=\;\seqgbu{\Psi_2}{D}$}
  \AXC{$\tau_3\;=\;\seqgbu{\Psi_3}{D}$}
  \RightLabel{$L\lor$}
  \BIC{$\seqgbu{p,\,\underline{\,q_1\lor q_2},\, B,\,C}{D}$}
  \RightLabel{$L\imp$}
  \BIC{$\tau_1\;=\;\seqgbu{p,\,\underline{A},\,B,\,C}{D}$}
  \DP
  \qquad
  \begin{minipage}{10em}
    $\Psi_2\,=\; p,\,q_1,\,B,\,C$
    \\[1ex] 
    $\Psi_3\,=\; p,\,q_2,\,B,\,C$
  \end{minipage}
  \]
  Let us consider the critical sequent $\tau_2$. 
  We have 4 possible choices:
  
  \begin{enumerate}[label=(c\arabic*),ref=(c\arabic*),start=6]
  \item\label{exE:choice6} Apply rule $L\imp$ with main formula $B$;
    by~\ref{search:5}, this requires
    $\DBMof{E}\neval\seqgbui{\Psi_2}{q_1}$.

  \item\label{exE:choice7} Apply rule $L\imp$ with main formula $C$;
    by~\ref{search:5}, this requires
    $\DBMof{E}\neval\seqgbui{\Psi_2}{q_2}$.

  \item\label{exE:choice8} Apply rule $R\lor_1$; 
by~\ref{search:5}, this requires $\DBMof{E}\neval\seqgbui{\Psi_2}{r_1}$.

  \item\label{exE:choice9} Apply rule $R\lor_2$; 
by~\ref{search:5}, this requires $\DBMof{E}\neval\seqgbui{\Psi_2}{r_2}$.
  \end{enumerate}

  \noindent
  We note that:
  \[
  \begin{array}{l}
    \DBMof{E}\eval\seqgbui{\Psi_2}{q_2}\quad\mbox{(see $\s_{\eqref{exE:3}}$)}
    \qquad
    \DBMof{E}\eval\seqgbui{\Psi_2}{r_1}\quad\mbox{(see $\s_{\eqref{exE:4}}$)}
    \\[1ex]
    \DBMof{E}\eval\seqgbui{\Psi_2}{r_2}\quad\mbox{(see $\s_{\eqref{exE:5}}$)}
    \qquad
    \DBMof{E}\neval\seqgbui{\Psi_2}{q_1}
  \end{array}
  \]
  Thus, we choose~\ref{exE:choice6} and we
  continue until we get the critical sequents $\tau_4$ and $\tau_5$.  
  \[
  \AXC{}
  \RightLabel{$\ruleAXID$}
  \UIC{$\seqgbu{p,\,q_1,\,B,\,C}{q_1}$}
  \AXC{$\tau_4\;=\;\seqgbu{\Psi_4}{D}$}
  \AXC{$\tau_5\;=\;\seqgbu{\Psi_5}{D}$}
  \RightLabel{$L\lor$}
  \BIC{$\seqgbu{p,\,q_1,\, \underline{D},\,C}{D}$}
  \RightLabel{$L\imp$}
  \BIC{$\tau_2\;=\;\seqgbu{p,\,q_1,\,\underline{B},\,C}{D}$}
  \DP
  \qquad
  \begin{minipage}{10em}
    $\Psi_4\,=\; p,\,q_1,\,r_1,\,C$
    \\[1ex] 
    $\Psi_5\,=\; p,\,q_1,\,r_2,\,C$
  \end{minipage}
  \]
For the critical sequent $\tau_4$,  we have 3 possible options:

  \begin{enumerate}[label=(c\arabic*),ref=(c\arabic*),start=10]
  \item\label{exE:choice10} Apply rule $L\imp$ with main formula $C$;
    by~\ref{search:5}, this requires
    $\DBMof{E}\neval\seqgbui{\Psi_4}{q_2}$.

  \item\label{exE:choice11} Apply rule $R\lor_1$; by~\ref{search:5},
    this requires $\DBMof{E}\neval\seqgbui{\Psi_4}{r_1}$.

  \item\label{exE:choice12} Apply rule $R\lor_2$; by~\ref{search:5},
    this requires $\DBMof{E}\neval\seqgbui{\Psi_4}{r_2}$.
  \end{enumerate}

  \noindent
  We have
  \[
  \begin{array}{l}
    \DBMof{E}\eval\seqgbui{\Psi_4}{q_2}\quad\mbox{(see $\s_{\eqref{exE:3}}$)}
    \qquad
    \DBMof{E}\eval\seqgbui{\Psi_4}{r_2}\quad\mbox{(see $\s_{\eqref{exE:5}}$)}
    \qquad
    \DBMof{E}\neval\seqgbui{\Psi_4}{r_1}
  \end{array}
  \]
  Thus, we   select~\ref{exE:choice11} and
  we get
  \[
  \AXC{}
  \RightLabel{$\ruleAXID$}
  \UIC{$\seqgbu{p,\,q_1,\,r_1,\,C}{r_1}$}
  \RightLabel{$R\lor_1$}
  \UIC{$\tau_4\;=\;\seqgbu{p,\,q_1,\,r_1,\,C }{D}$}
  \DP
  \]
  The $\GBUof{E}$-derivations of the sequents $\tau_5$ and $\tau_3$
  have a similar construction:
  \[
  \begin{array}{c}
    \AXC{}
    \RightLabel{$\ruleAXID$}
    \UIC{$\seqgbu{p,\,q_1,\,r_2,\,C}{r_2}$}
    \RightLabel{$R\lor_2$}
    \UIC{$\tau_5\;=\;\seqgbu{p,\,q_1,\,r_2,\,C }{D}$}
    \DP
    \\[6ex]
    \AXC{}
    \RightLabel{$\ruleAXID$}
    \UIC{$\seqgbu{p,\,q_2,\,B,\,C}{q_2}$}
    \AXC{}
    \RightLabel{$\ruleAXID$}
    \UIC{$\seqgbu{p,\,q_2,\,B,\,r_1}{r_1}$}
    \RightLabel{$R\lor_1$}
    \UIC{$\seqgbu{p,\,q_2,\,B,\,r_1}{D}$}
    \AXC{}
    \RightLabel{$\ruleAXID$}
    \UIC{$\seqgbu{p,\,q_2,\,B,\,r_2}{r_2}$}
    \RightLabel{$R\lor_2$}
    \UIC{$\seqgbu{p,\,q_2,\,B,\,r_2 }{D}$}
    \RightLabel{$L\lor$}
    \BIC{$\seqgbu{p,\,q_2,\, B,\,\underline{D}}{D}$}
    \RightLabel{$L\imp$}
    \BIC{$\tau_3\;=\;\seqgbu{p,\,q_2,\,B,\,\underline{C}}{D}$}
    \DP
  \end{array}
  \]
and this completes the definition of the $\GBUof{E}$-derivation of $E$.
  \EndEs
\end{example}


Now we prove the correctness of \Search. We start by showing some
properties of the evaluation relation.

\begin{lemma}\label{lemma:gbuInv}
  Let $G$ be a formula, let $\DBof{G}$ be a saturated database for $G$,
  $\Psi\subseteq \Sfl{G}$ and $\Omega\subseteq \bG$.
  \begin{enumerate}[label=(\roman*),ref=(\roman*)]
    
  \item\label{lemma:gbuInv:1} If $\DBof{G}\eval\seqgbu{A,B,\Psi}{C}$,
    then $\DBof{G}\eval\seqgbu{A\land B,\Psi}{C}$.
    
  \item\label{lemma:gbuInv:2} If $C_1\land C_2\in\Sfr{G}$
    and $\DBof{G}\eval\seqgbu{\Psi}{C_k}$ with $k\in\{1,2\}$ then 
    $\DBof{G}\eval\seqgbu{\Psi}{C_1\land C_2}$.
    
  \item\label{lemma:gbuInv:3} If $A_1\lor A_2\in\Sfl{G}$ and
    $\DBof{G}\eval\seqgbu{A_k,\Psi}{C}$ with $k\in\{1,2\}$, then
    $\DBof{G}\eval\seqgbu{A_1\lor A_2,\Psi}{C}$.
    
  \item\label{lemma:gbuInv:4} If $A\imp B\in\Sfl{G}$ and
    $\DBof{G}\eval\seqgbu{B,\Psi}{C}$, then $\DBof{G}\eval\seqgbu{A\imp
      B,\Psi}{C}$.
    
  \item\label{lemma:gbuInv:5} If $A\imp B\in\Sfr{G}$ 
and $\DBof{G}\eval\seqgbu{\Psi}{B}$, 
 with $A\in\Clo{\Psi}$,
then $\DBof{G}\eval\seqgbu{\Psi}{A\imp B}$.
    
  \item\label{lemma:gbuInv:6} If $A\imp B\in\Sfr{G}$ and
    $\DBof{G}\eval\seqgbu{A,\Psi}{B}$ with $A\not\in\Clo{\Psi}$, then
    $\DBof{G}\eval\seqgbu{\Psi}{A\imp B}$.
    
  \item\label{lemma:gbuInv:7} If $C_1\land C_2\in\Sfr{G}$ and
    $\DBof{G}\eval\seqgbui{\Omega}{C_k}$ with $k\in\{1,2\}$, then
    $\DBof{G}\eval\seqgbui{\Omega}{C_1\land C_2}$.

  \item\label{lemma:gbuInv:8} If $A\imp B\in\Sfr{G}$ and
    $\DBof{G}\eval\seqgbui{\Omega}{B}$ with $A\in\Clo{\Omega}$, then
    $\DBof{G}\eval\seqgbui{\Omega}{A\imp B}$.
    
  \item\label{lemma:gbuInv:9} If $\DBof{G}\eval\seqgbu{A,\Omega}{B}$ with
    $A\not\in\Clo{\Omega}$, then $\DBof{G}\eval\seqgbui{\Omega}{A\imp
      B}$.
  \end{enumerate}
\end{lemma}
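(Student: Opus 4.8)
The nine items split into two families according to whether the right-hand formula of the target sequent is enlarged. When the right formula is \emph{unchanged} (items~\ref{lemma:gbuInv:1},~\ref{lemma:gbuInv:3},~\ref{lemma:gbuInv:4}) I would simply reuse the same witness of $\DBof{G}$ and verify the $\eval$-condition by inspecting the closure grammar. In every other item the right formula grows, so I would follow one uniform recipe: take the witness $\s\in\DBof{G}$ supplied by the hypothesis $\DBof{G}\eval(\cdot)$; by~\ref{DB1} it is provable in $\FRJof{G}$; apply to it the appropriate right rule of $\FRJof{G}$ to obtain a provable sequent with the desired right formula; invoke saturation~\ref{DB2} to replace it by a subsuming member $\s'\in\DBof{G}$; and finally read off $\DBof{G}\eval(\cdot)$ from the shape of $\sqsubseteq$ together with the monotonicity of $\Clo{\cdot}$ (\ref{propClo:4}).

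For the direct cases, let $\seqfrj{\G}{C}\in\DBof{G}$ be the witness, so the left-hand side of the premise lies in $\Clo{\G}$. In each case the freshly introduced left formula already belongs to $\Clo{\G}$ by a single production of the closure grammar: from $A,B\in\Clo{\G}$ we get $A\land B\in\Clo{\G}$ (item~\ref{lemma:gbuInv:1}); from $A_k\in\Clo{\G}$ we get $A_1\lor A_2\in\Clo{\G}$ via the $A\lor X$/$X\lor A$ productions (item~\ref{lemma:gbuInv:3}); from $B\in\Clo{\G}$ we get $A\imp B\in\Clo{\G}$ via the $A\imp X$ production (item~\ref{lemma:gbuInv:4}). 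Hence the \emph{same} witness certifies the conclusion and no appeal to saturation is needed.

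The items~\ref{lemma:gbuInv:2},~\ref{lemma:gbuInv:5},~\ref{lemma:gbuInv:6},~\ref{lemma:gbuInv:7} follow the recipe with light bookkeeping. For~\ref{lemma:gbuInv:2} I apply the regular $\land$-rule to $\seqfrj{\G}{C_k}$ (legitimate as $C_1\land C_2\in\Sfr{G}$) to get $\proves{\FRJof{G}}{\seqfrj{\G}{C_1\land C_2}}$; saturation yields $\seqfrj{\G'}{C_1\land C_2}\in\DBof{G}$ with $\G\subseteq\G'$, whence $\Psi\subseteq\Clo{\G}\subseteq\Clo{\G'}$. For~\ref{lemma:gbuInv:5} and~\ref{lemma:gbuInv:6} I use the regular rule $\ruleIMPi$, whose side condition $A\in\Clo{\G}$ holds because in~\ref{lemma:gbuInv:6} one has $A\in\{A\}\cup\Psi\subseteq\Clo{\G}$, while in~\ref{lemma:gbuInv:5} one has $A\in\Clo{\Psi}\subseteq\Clo{\G}$ by~\ref{propClo:6}; saturation then finishes as before. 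Item~\ref{lemma:gbuInv:7} mirrors~\ref{lemma:gbuInv:2} with the irregular $\land$-rule: $\seqfrji{\Sigma}{\Theta}{C_k}$ yields $\seqfrji{\Sigma}{\Theta}{C_1\land C_2}$, saturation provides $\seqfrji{\Sigma}{\Theta'}{C_1\land C_2}\in\DBof{G}$ with $\Theta\subseteq\Theta'$, and $\Sigma\subseteq\Omega\subseteq\Sigma\cup\Theta\subseteq\Sigma\cup\Theta'$ gives the $\eval$-condition.

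The genuinely delicate items are~\ref{lemma:gbuInv:8} and~\ref{lemma:gbuInv:9}, since they cross between the two sequent shapes through the implication rules of $\FRJof{G}$ and must fit the partition $\Sigma/\Theta$ to the sandwich $\Sigma\subseteq\Omega\subseteq\Sigma\cup\Theta$. For~\ref{lemma:gbuInv:8}, from the witness $\seqfrji{\Sigma}{\Theta}{B}$ I want to apply the irregular $\ruleIMPi$, which shifts a block $\Lambda\subseteq\Theta$ into the stable set under the proviso $A\in\Clo{\Sigma\cup\Lambda}$. The crucial ingredient is the set identity $\Sigma\cup(\Theta\cap\Omega)=\Omega$, valid precisely because $\Sigma\subseteq\Omega\subseteq\Sigma\cup\Theta$; since $A\in\Clo{\Omega}$, I may choose $\Lambda\subseteq\Theta\cap\Omega$ minimal (to respect~\ref{PS1}) with $A\in\Clo{\Sigma\cup\Lambda}$. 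The rule produces $\seqfrji{\Sigma\cup\Lambda}{\Theta\setminus\Lambda}{A\imp B}$, and after saturation one has a member with the same stable set $\Sigma\cup\Lambda$ and a larger $\Theta$-part, so that $\Sigma\cup\Lambda\subseteq\Omega\subseteq\Sigma\cup\Theta$ gives the $\eval$-condition. For~\ref{lemma:gbuInv:9} I use $\ruleIMPni$, the only rule turning a regular sequent into an irregular one: the witness $\seqfrj{\G}{B}$ satisfies $\{A\}\cup\Omega\subseteq\Clo{\G}$, so a maximal extension of $\Theta=\Omega$ meets the side conditions $\Theta\subseteq\Clo{\G}\cap\bG$ (using $\Omega\subseteq\bG$) and $A\in\Clo{\G}\setminus\Clo{\Theta}$ (using $A\not\in\Clo{\Omega}$); the resulting $\seqfrji{}{\Theta}{A\imp B}$, after saturation, gives $\DBof{G}\eval\seqgbui{\Omega}{A\imp B}$ because $\emptyset\subseteq\Omega\subseteq\Theta$. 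The main obstacle throughout is exactly this matching in~\ref{lemma:gbuInv:8}, namely choosing $\Lambda$ so that the stable set of the derived $\FRJof{G}$-sequent is correctly bracketed by $\Omega$, for which the identity $\Sigma\cup(\Theta\cap\Omega)=\Omega$ is decisive.
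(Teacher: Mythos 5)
Your proposal is correct and follows essentially the same route as the paper's proof: reuse the same witness (via a single closure production) for the left-rule cases~\ref{lemma:gbuInv:1}, \ref{lemma:gbuInv:3}, \ref{lemma:gbuInv:4}, and for the remaining cases apply the corresponding $\FRJof{G}$ right rule to the witness and invoke saturation~\ref{DB2}, with the choices of $\Lambda$ (for~\ref{lemma:gbuInv:8}) and $\Theta$ (for~\ref{lemma:gbuInv:9}) made exactly as in the paper so as to respect~\ref{PS1}/\ref{PS2} and the bracketing of $\Omega$. The identity $\Sigma\cup(\Theta\cap\Omega)=\Omega$ is just a more explicit rendering of the paper's remark that $\Lambda$ can be chosen with $\Sigma\cup\Lambda\subseteq\Omega$.
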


\begin{proof}
  We only detail some representative cases.

\paragraph{Proof of~\ref{lemma:gbuInv:1}}
Let $\DBof{G}\eval\seqgbu{A,B,\Psi}{C}$. Then, there exists $\G$  such that:
\begin{itemize}
\item  $\seqfrj{\G}{C}\in\DBof{G}$;
  
\item $\Psi\cup \{A,B\}\, \subseteq\, \Clo{\G}$.

\end{itemize}
Since $A\land B\in\Clo{\G}$, we have
$\Psi\cup \{A\land B\} \subseteq \Clo{\G}$, hence
$\DBof{G}\eval\seqgbu{A\land B,\Psi}{C}$.

\paragraph{Proof of~\ref{lemma:gbuInv:2}}
Let $C_1\land C_2\in\Sfr{G}$ and let us assume  $\DBof{G}\eval\seqgbu{\Psi}{C_k}$,
with $k\in\{1,2\}$. Then, there exists $\G$  such that:
\begin{itemize}
\item  $\seqfrj{\G}{C_k}\in\DBof{G}$;

\item $\Psi\, \subseteq\, \Clo{\G}$.

\end{itemize}
By~\ref{DB1}, $\proves{\FRJof{G}}\seqfrj{\G}{C_k}$, hence
$\proves{\FRJof{G}}\seqfrj{\G}{C_1\land C_2}$.  By~\ref{DB2},
$\DBof{G}$ contains a sequent $\seqfrj{\G'}{C_1\land C_2}$ such that
$\G\subseteq \G'$.  By~\ref{propClo:4}, $\Clo{\G}\subseteq\Clo{\G'}$,
which implies $\Psi \subseteq \Clo{\G'}$, hence
$\DBof{G}\eval\seqgbu{\Psi}{C_1\land C_2}$.

\paragraph{Proof of~\ref{lemma:gbuInv:5}}
Let  $A\imp B\in\Sfr{G}$  and let us assume
$\DBof{G}\eval\seqgbu{\Psi}{B}$, with $A\in\Clo{\Psi}$.
Then,  there exists $\G$  such that:
\begin{itemize}
\item  $\seqfrj{\G}{B}\in\DBof{G}$;

\item $\Psi\, \subseteq\, \Clo{\G}$.
\end{itemize}
By~\ref{DB1},  $\proves{\FRJof{G}}{\seqfrj{\G}{B}}$.
By~\ref{propClo:6} we get $\Clo{\Psi}\subseteq\Clo{\G}$, hence
$A\in\Clo{\G}$; this implies  
$\proves{\FRJof{G}}{\seqfrj{\G}{A\imp B}}$.  
Reasoning as in the proof of case~\ref{lemma:gbuInv:2},
we get $\DBof{G}\eval\seqgbu{\Psi}{A\imp B}$.


\paragraph{Proof of~\ref{lemma:gbuInv:6}}
Let $A\imp B\in\Sfr{G}$ and let us assume $\DBof{G}\eval\seqgbu{A,\Psi}{B}$, with
$A\not\in\Clo{\Psi}$. Then, there exists $\G$  such that:
\begin{itemize}
\item  $\seqfrj{\G}{B}\in\DBof{G}$;

\item $\Psi\cup\{A\}\, \subseteq\, \Clo{\G}$.
\end{itemize}
By~\ref{DB1}, $\proves{\FRJof{G}}\seqfrj{\G}{B}$, hence
$\proves{\FRJof{G}}\seqfrj{\G}{A\imp B}$.  
Reasoning as in the proof of case~\ref{lemma:gbuInv:2},
we get
$\DBof{G}\eval\seqgbu{\Psi}{A\imp   B}$.

\paragraph{Proof of~\ref{lemma:gbuInv:8}}
Let us assume $\DBof{G}\eval\seqgbui{\Omega}{B}$ and $A\in\Clo{\Omega}$.
Then, there exist 
$\Sigma$ and $\Theta$  such that:

\begin{itemize}
\item  $\seqfrji{\Sigma}{\Theta}{B}\in\DBof{G}$;
\item $\Sigma\,\subseteq\, \Omega \,\subseteq\,\Sigma\cup\Theta$.
\end{itemize}

\noindent
Since $\Omega\subseteq\Sigma\cup\Theta$, by~\ref{propClo:4} we get
$\Clo{\Omega}\subseteq\Clo{\Sigma\cup\Theta}$, hence
$A\in\Clo{\Sigma\cup\Theta}$.  Let $\Lambda$ be a minimum (possibly
empty) subset of $\Theta$ such that $A\in\Clo{\Sigma\cup \Lambda}$.
Since $A\in\Clo{\Omega}$ and $\Sigma\subseteq \Omega\subseteq
\Sigma\cup\Theta$, we can choose $\Lambda$ so that
$\Sigma\cup\Lambda\subseteq \Omega$.  By~\ref{DB1} $\proves{\FRJof{G}}
\seqfrji{\Sigma}{\Theta}{B}$, hence we can build the
$\FRJof{G}$-derivation:
\[
\AXC{$\vdots$}
\noLine
\UIC{$\seqfrji{\Sigma}{\;\overbrace{\Lambda,\,\Theta_1}^{\Theta}\;}{B}$}
\RightLabel{$\ruleIMPi$}
\UIC{$\seqfrji{\Sigma,\Lambda}{\Theta_1}{A\imp B}$}
\DP
\qquad
\begin{minipage}{15em}
$\Theta_1\,=\, \Theta\setminus\Lambda$
\\[.5ex]
$\Lambda\subseteq \Theta$ and $A\in\Clo{\Sigma\cup\Lambda}$
\\[.5ex]
$\Lambda'\subsetneq \Lambda$ implies  $A\not\in\Clo{\Sigma\cup\Lambda'}$
\end{minipage}
\]
Note that the above derivation matches~\ref{PS1}.
 By~\ref{DB2},
$\DBof{G}$ contains a sequent 
$\seqfrji{\Sigma,\Lambda}{\Theta'}{A\imp B}$ such that
$\Theta_1\subseteq \Theta'$.
Since $\Omega\subseteq \Sigma\cup\Theta$ and
$\Sigma\cup\Theta=\Sigma\cup\Lambda\cup\Theta_1$,
we get  $\Omega\subseteq \Sigma\cup\Lambda\cup \Theta'$.
Thus
$\Sigma\cup\Lambda\,\subseteq\, \Omega\,\subseteq\, \Sigma\cup\Lambda\cup \Theta'$,
and this proves that
$\DBof{G}\eval\seqgbui{\Omega}{A\imp B}$.

\paragraph{Proof of~\ref{lemma:gbuInv:9}}
Let us assume $\DBof{G}\eval\seqgbu{A,\Omega}{B}$  and $A\not\in\Clo{\Omega}$.
Then, there exists
$\Gamma$  such that: 

\begin{itemize}
\item  $\seqfrj{\Gamma}{B}\in\DBof{G}$;

\item $\Omega\cup\{A\}\,\subseteq\,\Clo{\Gamma}$.
  
  \end{itemize}

  \noindent
  Let $\Theta$ be a maximal subset of $\Clo{\G}\cap\bG$ such that
  $A\not\in\Clo{\Theta}$; since $\Omega\subseteq \Clo{\G}\cap\bG$ and
  $A\not\in\Clo{\Omega}$, we can choose $\Theta$ so that
  $\Omega\subseteq \Theta$.  By~\ref{DB1},
  $\proves{\FRJof{G}} \seqfrj{\Gamma}{B}$, hence we can build the
  following $\FRJof{G}$-derivation:
\[
\AXC{$\vdots$}
\noLine
\UIC{$\seqfrj{\G}{B}$}
\RightLabel{$\ruleIMPni$}
\UIC{$\seqfrji{}{\Theta}{A\imp B}$}
\DP
\qquad
\begin{minipage}{20em}
$\Omega\,\subseteq\,\Theta\,\subseteq\, \Clo{\Gamma}\cap\bG$
\\[.5ex]
$A\in\,\Clo{\G}\setminus\Clo{\Theta}$
\\[.5ex]
$\Theta\subsetneq\Theta'\subseteq \Clo{\Gamma}\cap\bG$ implies
  $A\in\Clo{\Theta'}$
\end{minipage}
\]
Note that the above derivation matches~\ref{PS2}.
By~\ref{DB2},
$\DBof{G}$ contains a sequent 
$\seqfrji{}{\Theta'}{A\imp B}$, where $\Theta\subseteq \Theta'$;
since $\Omega\subseteq \Theta'$,
we get $\DBof{G}\eval\seqgbui{\Omega}{A\imp B}$. 
\end{proof}


\begin{lemma}\label{lemma:gbuiOr}
  Let $G$ be a formula, let $\DBof{G}$ be a saturated database for $G$
  and $\Omega \subseteq \bG$.  If $\DBof{G}\eval\seqgbui{\Omega}{C_1}$
  and $\DBof{G}\eval\seqgbui{\Omega}{C_2}$, then
  $\DBof{G}\eval\seqgbui{\Omega}{C_1\lor C_2}$.
\end{lemma}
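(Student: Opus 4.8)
The plan is to apply the rule $\lor$ of $\FRJof{G}$ to two witnessing sequents supplied by the hypotheses, and then to use the saturation of $\DBof{G}$ to retrieve a database sequent certifying $\DBof{G}\eval\seqgbui{\Omega}{C_1\lor C_2}$. First I would unfold the two hypotheses: since $\DBof{G}\eval\seqgbui{\Omega}{C_1}$ and $\DBof{G}\eval\seqgbui{\Omega}{C_2}$, there are irregular sequents $\seqfrji{\Sigma_1}{\Theta_1}{C_1}\in\DBof{G}$ and $\seqfrji{\Sigma_2}{\Theta_2}{C_2}\in\DBof{G}$ with
\[
\Sigma_1\subseteq\Omega\subseteq\Sigma_1\cup\Theta_1
\qquad
\Sigma_2\subseteq\Omega\subseteq\Sigma_2\cup\Theta_2 .
\]

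Next I would verify the side conditions of rule $\lor$. From $\Sigma_1\subseteq\Omega$ and $\Omega\subseteq\Sigma_2\cup\Theta_2$ we get $\Sigma_1\subseteq\Sigma_2\cup\Theta_2$, and symmetrically $\Sigma_2\subseteq\Sigma_1\cup\Theta_1$; hence both side conditions of $\lor$ hold. By~\ref{DB1} the two premises are provable in $\FRJof{G}$, so applying $\lor$ produces a proof of
\[
\s\;=\;\seqfrji{\Sigma_1\cup\Sigma_2}{\Theta_1\cap\Theta_2}{C_1\lor C_2},
\]
that is $\proves{\FRJof{G}}\s$ (note that $\lor$ carries none of the restrictions~\ref{PS1}--\ref{PS4}, so combining PS-compliant derivations of the premises still yields a PS-compliant derivation). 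Write $\Sigma=\Sigma_1\cup\Sigma_2$ and $\Theta=\Theta_1\cap\Theta_2$.

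The key step, and the part I expect to be the crux, is to check the two-sided inclusion $\Sigma\subseteq\Omega\subseteq\Sigma\cup\Theta$ that defines $\eval$. The inclusion $\Sigma\subseteq\Omega$ is immediate from $\Sigma_1\subseteq\Omega$ and $\Sigma_2\subseteq\Omega$. For $\Omega\subseteq\Sigma\cup\Theta$ I would argue by cases on an arbitrary $x\in\Omega$: from $\Omega\subseteq\Sigma_1\cup\Theta_1$ and $\Omega\subseteq\Sigma_2\cup\Theta_2$ we have $x\in\Sigma_1\cup\Theta_1$ and $x\in\Sigma_2\cup\Theta_2$; if $x\in\Sigma_1$ or $x\in\Sigma_2$ then $x\in\Sigma$, and otherwise $x\notin\Sigma_1$ together with $x\notin\Sigma_2$ force $x\in\Theta_1$ and $x\in\Theta_2$, whence $x\in\Theta_1\cap\Theta_2=\Theta$. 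This is precisely where the union of the $\Sigma$-sets and the intersection of the $\Theta$-sets in the conclusion of $\lor$ mesh with the definition of $\eval$.

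Finally I would invoke saturation. Since $\proves{\FRJof{G}}\s$, by~\ref{DB2} there is $\s'\in\DBof{G}$ with $\s\sqsubseteq\s'$; as $\s$ is irregular, by the definition of subsumption $\s'=\seqfrji{\Sigma}{\Theta'}{C_1\lor C_2}$ for some $\Theta'\supseteq\Theta$. Then $\Sigma\subseteq\Omega\subseteq\Sigma\cup\Theta\subseteq\Sigma\cup\Theta'$, so $\s'$ witnesses $\DBof{G}\eval\seqgbui{\Omega}{C_1\lor C_2}$, as required. The only further point to keep in mind is the well-formedness requirement $C_1\lor C_2\in\Sfr{G}$ needed to apply $\lor$; this holds in every context where the lemma is invoked, since there $\seqgbui{\Omega}{C_1\lor C_2}$ is a genuine $\GBUof{G}$-sequent.
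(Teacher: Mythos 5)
Your proof is correct and follows essentially the same route as the paper's: unfold the two $\eval$ witnesses, check the side conditions of rule $\lor$, apply it to obtain $\seqfrji{\Sigma_1,\Sigma_2}{\Theta_1\cap\Theta_2}{C_1\lor C_2}$, and use saturation~\ref{DB2} to find a subsuming sequent in $\DBof{G}$. You merely spell out the inclusion $\Omega\subseteq\Sigma\cup\Theta$ that the paper dismisses with ``one can easily check,'' and your side remarks on the PS-restrictions and on $C_1\lor C_2\in\Sfr{G}$ are accurate.
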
  

\begin{proof}
Let us assume  $\DBof{G}\eval\seqgbui{\Omega}{C_1}$ and
$\DBof{G}\eval\seqgbui{\Omega}{C_2}$. 
For $k\in\{1,2\}$,  there exist
$\Sigma_k$ and $\Theta_k$ be such that:

\begin{itemize}
\item  $\seqfrji{\Sigma_k}{\Theta_k}{C_k}\in\DBof{G}$;

\item $\Sigma_k\,\subseteq\,\Omega\, \subseteq\, \Sigma_k\cup\Theta_k$.
\end{itemize}

\noindent
By~\ref{DB1}, $\proves{\FRJof{G}} \seqfrji{\Sigma_k}{\Theta_k}{C_k}$.
Note that  $\Sigma_1\subseteq \Sigma_2\cup\Theta_2$ and
$\Sigma_2\subseteq \Sigma_1\cup\Theta_1$, hence 
we can build the following $\FRJof{G}$-derivation:
\[
\begin{array}{c}
 \AXC{$\vdots$}
\noLine
\UIC{$\seqfrji{\Sigma_1}{\Theta_1}{C_1}$}
\AXC{$\vdots$}
\noLine
\UIC{$\seqfrji{\Sigma_2}{\Theta_2}{C_2}$}
\RightLabel{$\lor$}
\BIC{$\seqfrji{\underbrace{\Sigma_1,\Sigma_2}_{\Sigma}\;}
{\;\underbrace{\Theta_1\cap\Theta_2}_{\Theta}}{C_1\lor C_2}$}
\DP
\end{array}
\]
By~\ref{DB2}, $\DBof{G}$ contains a sequent
$\seqfrji{\Sigma}{\Theta'}{C_1\lor C_2}$ such that
$\Theta\subseteq\Theta'$.  One can easily check that $\Sigma\subseteq
\Omega$ and $\Omega\subseteq\Sigma\cup\Theta$, which implies
$\Omega\subseteq\Sigma\cup\Theta'$. We conclude
$\DBof{G}\eval\seqgbui{\Omega}{C_1\lor C_2}$.  
\end{proof}

\begin{lemma}\label{lemma:gbuSuccAt}
  Let $G$ be a formula and let $\DBof{G}$ be a saturated database for
  $G$.  Let $\tau=\seqgbu{\Omega}{F}$ be such that $\Omega\subseteq \bG$
  and $F\in\Prime$, and let us assume that:

  \begin{enumerate}[label=(\roman*),ref=(\roman*)]
  \item $F\not\in\Omega$;

  \item for every $A\imp B\in\Omega$,\
    $\DBof{G}\eval\seqgbui{\Omega}{A}$.
  \end{enumerate}
  Then,  $\DBof{G}\eval \tau$.
\end{lemma}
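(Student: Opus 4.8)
The plan is to build an $\FRJof{G}$-derivation of a regular sequent $\seqfrj{\G}{F}$ whose left-hand side contains $\Omega$, and then read off $\DBof{G}\eval\tau$ by saturation. Note first that, being the right formula of a $\GBUof{G}$-sequent, $F\in\Sfr{G}$, so $F$ is a legitimate right formula for the axiom $\ruleAXR$ and for $\ruleJOINA$. Write $\Omat=\Omega\cap\PV$ and $\Omimp=\Omega\cap\Fmimp$, so $\Omega=\Omat\cup\Omimp$ with $\Omat\subseteq\bGat$; I split on whether $\Omega$ contains implications.

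If $\Omimp=\emptyset$, then $\Omega=\Omat\subseteq\bGat$ and, since $F\notin\Omega$, in fact $\Omat\subseteq\bGat\setminus\{F\}$. The axiom $\ruleAXR$ yields the provable regular sequent $\seqfrj{\bGat\setminus\{F\}}{F}$ whose left-hand side already contains $\Omega$. If $\Omimp\neq\emptyset$, enumerate the set of antecedents $\Upsilon=\{A_1,\dots,A_n\}=\{\,A~|~A\imp B\in\Omimp\,\}$ with $n\geq 1$. For each $k$, assumption (ii) applied to some $A_k\imp B\in\Omimp$ gives $\DBof{G}\eval\seqgbui{\Omega}{A_k}$, hence a premise $\s_k=\seqfrji{\Sigma_k}{\Theta_k}{A_k}\in\DBof{G}$ (provable by~\ref{DB1}) with $\Sigma_k\subseteq\Omega\subseteq\Sigma_k\cup\Theta_k$; decompose $\Sigma_k=\Sigat_k\cup\Sigimp_k$ and $\Theta_k=\That_k\cup\Thimp_k$ as usual. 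I apply $\ruleJOINA$ to $\s_1,\dots,\s_n$ with right formula $F$. The side conditions hold: \ref{sc:J1} follows from $\Sigma_i\subseteq\Omega\subseteq\Sigma_j\cup\Theta_j$; for \ref{sc:J2} note $\Sigimp=\bigcup_k\Sigimp_k\subseteq\Omimp$, so any $Y\imp Z\in\Sigimp$ lies in $\Omega$ and thus $Y\in\Upsilon$; the restriction~\ref{PS3} holds since each $A_k$ is the antecedent of $A_k\imp B\in\Omimp\subseteq\Sfl{G}$; and $F\notin\Sigat$ because $\Sigat=\bigcup_k\Sigat_k\subseteq\Omat\subseteq\Omega$ while $F\notin\Omega$. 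The conclusion is a provable regular sequent $\s=\seqfrj{\G}{F}$ with $\G=\Lhs{\s}=\Sigat\cup(\That\setminus\{F\})\cup\Sigimp\cup\Thimp$.

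The crux is to verify $\Omega\subseteq\G$, and this is the step I expect to need the most care, because the restriction $\Thimp=\Restr{(\bigcap_k\Thimp_k)}{\Upsilon}$ could in principle discard implications of $\Omega$. For a variable $p\in\Omat$: each premise gives $p\in\Sigat_k\cup\That_k$; if $p\in\Sigat_k$ for some $k$ then $p\in\Sigat\subseteq\G$, and otherwise $p\in\bigcap_k\That_k=\That$, so (as $p\neq F$) $p\in\That\setminus\{F\}\subseteq\G$. For an implication $A_0\imp B_0\in\Omimp$: each premise gives $A_0\imp B_0\in\Sigimp_k\cup\Thimp_k$; if it lies in some $\Sigimp_k$ then $A_0\imp B_0\in\Sigimp\subseteq\G$, and otherwise it lies in $\bigcap_k\Thimp_k$ and, since its antecedent satisfies $A_0\in\Upsilon$, it is retained by the restriction, so $A_0\imp B_0\in\Thimp\subseteq\G$. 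The whole point is that $\Upsilon$ is \emph{exactly} the set of antecedents occurring in $\Omega$, which is why the restriction loses nothing of $\Omega$.

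In both cases I have produced a provable regular sequent $\seqfrj{\G}{F}$ with $\Omega\subseteq\G\subseteq\Clo{\G}$. By saturation~\ref{DB2} there is $\seqfrj{\G'}{F}\in\DBof{G}$ with $\G\subseteq\G'$, whence $\Omega\subseteq\Clo{\G}\subseteq\Clo{\G'}$ by~\ref{propClo:4}. This exhibits a witness for $\DBof{G}\eval\seqgbu{\Omega}{F}$, i.e.\ $\DBof{G}\eval\tau$, as required.
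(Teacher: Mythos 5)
Your proof is correct and follows essentially the same route as the paper's: split on whether $\Omega$ contains implications, use the regular axiom $\ruleAXR$ in the atomic case, and in the other case obtain premises from the hypothesis $\DBof{G}\eval\seqgbui{\Omega}{A}$ and apply $\ruleJOINA$, concluding via saturation~\ref{DB2} and~\ref{propClo:3}/\ref{propClo:4}. The only difference is that you spell out the verification of $\Omega\subseteq\G$ and of the side conditions (notably that the restriction $\Restr{\cdot}{\Upsilon}$ discards nothing of $\Omimp$), which the paper leaves as ``one can easily check''.
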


\begin{proof}
  Let $\Omega=\Omat \cup \Omimp$.  Let us assume that $\Omimp$ is
  empty, namely $\Omega = \Omat$, and let $\Gamma' =
  \bGat\setminus\{F\}$.  Then, $\s'=\seqfrj{\G'}{F}$ is an axiom
  sequent of $\FRJof{G}$ hence, by~\ref{DB2}, $\DBof{G}$ contains a
  sequent $\s''=\seqfrj{\G}{F}$ with $\G'\subseteq\G$.  Since
  $\Omat\subseteq \G$, by~\ref{propClo:3} we get $\Omat\subseteq
  \Clo{\G}$, hence $\DBof{G}\eval \tau$.

Let  $\Omimp$ be nonempty and 
let $\Omimp=\{A_1\imp B_1,\dots,A_n\imp B_n\}$ ($n\geq 1$).
For every $k\in\{1,\dots,n\}$, since  $\DBof{G}\eval\seqgbui{\Omega}{A_k}$,
there are $\Sigma_k$ and $\Theta_k$ such that:

\begin{itemize}
\item  $\seqfrji{\Sigma_k}{\Theta_k}{A_k}\in\DBof{G}$;
\item $\Sigma_k\,\subseteq\,\Omega\, \subseteq\, \Sigma_k\cup\Theta_k$.

\end{itemize}
By~\ref{DB1}, $\proves{\FRJof{G}} \seqfrji{\Sigma_k}{\Theta_k}{A_k}$.
One can easily check that, for $i\neq j$, it holds that
 $\Sigma_i\subseteq \Sigma_j\cup\Theta_j$.
For $k\in\{1,\dots,n\}$, let $\Sigma_k\,=\, \Sigat_k\cup\Sigimp_k$ 
and $\Theta_k\,=\, \That_k\cup\Thimp_k$. 
We can build the following $\FRJof{G}$-derivation
($\Sigat$,  $\Sigimp$,  $\That$,  $\Thimp$ are defined as in Fig.~\ref{fig:FRJ}):
\[
\begin{array}{c}
  \AXC{$\vdots$}
  \noLine
  \UIC{$\seqfrji{\Sigma_1}{\Theta_1}{A_1}$}
  \AXC{\dots}
  \AXC{$\vdots$}
  \noLine
  \UIC{$\seqfrji{\Sigma_n}{\Theta_n}{A_n}$}
  \insertBetweenHyps{\hskip -2pt}
  \RightLabel{$\ruleJOINA$}
  \TIC{$\seqfrj{\G}{F}$}
  \DP
  \\[8ex]    
  \G\;=\;\Sigat\,\cup\,\Sigimp\,\cup\,(\That\setminus\{F\})\,\cup\,\Thimp
\end{array}
\]
Note that the above derivation matches the restriction~\ref{PS3} on
the application of $\ruleJOINA$ stated in Sect.~\ref{sec:FRJ}.
By~\ref{DB2}, $\DBof{G}$ contains a sequent $\seqfrj{\G'}{F}$, with
$\G\subseteq\G'$; by~\ref{propClo:4}, $\Clo{\G}\subseteq\Clo{\G'}$.
One can easily check that $\Omega\subseteq\Gamma$ hence,
by~\ref{propClo:3}, $\Omega \subseteq\Clo{\Gamma}$.  Thus
$\Omega \subseteq\Clo{\Gamma'}$, which implies
$\DBof{G}\eval\seqgbui{\Omega}{F}$.  
\end{proof}

In a similar way we can prove that:

\begin{lemma}\label{lemma:gbuSuccOr}
Let $G$ be a formula and
let $\DBof{G}$ be a saturated database for $G$.
  Let $\tau=\seqgbu{\Omega}{C_1\lor C_2}$ be
such that   $\Omega\subseteq \bG$,
and let us assume that:

  \begin{enumerate}[label=(\roman*),ref=(\roman*)]

\item for every $A\imp B\in\Omega$,\
    $\DBof{G}\eval\seqgbui{\Omega}{A}$.

\item
    $\DBof{G}\eval\seqgbui{\Omega}{C_1}$ and $\DBof{G}\eval\seqgbui{\Omega}{C_2}$.
  \end{enumerate}
  Then,  $\DBof{G}\eval \tau$.  \qed
\end{lemma}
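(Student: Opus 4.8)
The plan is to prove the statement exactly as the companion Lemma~\ref{lemma:gbuSuccAt}, replacing the appeal to $\ruleJOINA$ by one to $\ruleJOINO$. The extra ingredient is that the two witnesses supplied by the second hypothesis, $\DBof{G}\eval\seqgbui{\Omega}{C_1}$ and $\DBof{G}\eval\seqgbui{\Omega}{C_2}$, play the role of the premises carrying $C_1$ and $C_2$ in the right, while the first hypothesis supplies the premises needed to \emph{support} the implicative formulas that appear in the stable sets.

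First I would collect the premises. Write $\Omega=\Omat\cup\Omimp$ with $\Omimp=\{A_1\imp B_1,\dots,A_n\imp B_n\}$. From the second hypothesis I obtain, for $k\in\{1,2\}$, a sequent $\seqfrji{\Sigma}{\Theta}{C_k}\in\DBof{G}$ with $\Sigma\subseteq\Omega\subseteq\Sigma\cup\Theta$; from the first hypothesis I obtain, for each $k\in\{1,\dots,n\}$, a sequent $\seqfrji{\Sigma_k}{\Theta_k}{A_k}\in\DBof{G}$ with $\Sigma_k\subseteq\Omega\subseteq\Sigma_k\cup\Theta_k$. By~\ref{DB1} all these sequents are provable in $\FRJof{G}$, and I take them as the premises of $\ruleJOINO$, so that the set of right formulas is $\Upsilon=\{C_1,C_2,A_1,\dots,A_n\}$; in particular $\{C_1,C_2\}\subseteq\Upsilon$. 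I then verify the side conditions. Condition~\ref{sc:J1} is immediate, since every stable set is contained in $\Omega$ while every $\Sigma_j\cup\Theta_j$ contains $\Omega$. For the supportedness condition~\ref{sc:J2}, any $Y\imp Z\in\Sigimp$ lies in some stable set, hence in $\Omega$, so $Y\imp Z$ is one of the $A_k\imp B_k$; since a premise with right formula $A_k$ was included, $Y=A_k\in\Upsilon$. Restriction~\ref{PS4} holds because each $A_k$ has $A_k\imp B_k\in\bGimp\subseteq\Sfl{G}$, whereas $C_1$ and $C_2$ are covered by $C_1\lor C_2\in\Sfr{G}$ (which holds since $\Rhs{\tau}\in\Sfr{G}$). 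Thus $\ruleJOINO$ yields $\proves{\FRJof{G}}{\seqfrj{\G}{C_1\lor C_2}}$ with $\G=\Sigat\cup\That\cup\Sigimp\cup\Thimp$, where $\Sigat,\That,\Sigimp,\Thimp$ are as in Fig.~\ref{fig:FRJ}.

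The heart of the argument is the inclusion $\Omega\subseteq\G$. For a variable $x\in\Omat$, each premise $j$ gives $x\in\Sigat_j\cup\That_j$ (as $x$ is atomic); if $x\in\Sigat_j$ for some $j$ then $x\in\Sigat$, and otherwise $x\in\That_j$ for all $j$, whence $x\in\That$. For an implicative $x=A_k\imp B_k\in\Omimp$, each premise $j$ gives $x\in\Sigimp_j\cup\Thimp_j$; if $x\in\Sigimp_j$ for some $j$ then $x\in\Sigimp$, and otherwise $x\in\bigcap_j\Thimp_j$, and since its antecedent $A_k$ lies in $\Upsilon$ the restriction $\Thimp=\Restr{(\bigcap_j\Thimp_j)}{\Upsilon}$ keeps it, so $x\in\Thimp$. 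Hence $\Omega\subseteq\G$, and by~\ref{propClo:3} also $\Omega\subseteq\Clo{\G}$. To close, saturation~\ref{DB2} gives $\seqfrj{\G'}{C_1\lor C_2}\in\DBof{G}$ with $\G\subseteq\G'$; by~\ref{propClo:4}, $\Omega\subseteq\Clo{\G}\subseteq\Clo{\G'}$, which is exactly $\DBof{G}\eval\tau$.

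I expect the main obstacle to be the simultaneous bookkeeping in the two checks that use both hypotheses at once, namely the supportedness condition~\ref{sc:J2} and the placement of the implicative formulas of $\Omega$ into $\Thimp$: both hinge on the observation that every implicative formula of $\Omega$ is some $A_k\imp B_k$ whose antecedent $A_k$ has been forced into $\Upsilon$ precisely by the first hypothesis. I also note that, unlike Lemma~\ref{lemma:gbuSuccAt}, no separate case for an empty $\Omimp$ is needed here, since the two premises coming from the second hypothesis already make $\ruleJOINO$ applicable.
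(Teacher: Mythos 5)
Your proof is correct and follows exactly the route the paper intends: the paper only remarks that this lemma is proved ``in a similar way'' to Lemma~\ref{lemma:gbuSuccAt}, and your argument is precisely that adaptation, taking the $A_k$-premises from hypothesis (i) and the $C_1,C_2$-premises from hypothesis (ii), applying $\ruleJOINO$ with the side conditions \ref{sc:J1}, \ref{sc:J2} and restriction \ref{PS4} checked, and closing via $\Omega\subseteq\G$, \ref{propClo:3}, \ref{DB2} and \ref{propClo:4}. The only cosmetic blemish is reusing the symbols $\Sigma,\Theta$ for both $C_1$- and $C_2$-premises, which should be disambiguated, but the reasoning is sound.
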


We prove the correctness of  $\Search$.

\begin{theorem}[Correctness of $\Search$]\label{theo:search}
  Let $G$ be a formula, let $\tau$ be a $\GBUof{G}$-sequent and let
  $\DBof{G}$ be a saturated database for $G$
  satisfying~\ref{search:ass1} and~\ref{search:ass2}.  Then,
  $\Search(\tau,\DBof{G})$ computes a $\GBUof{G}$-derivation of $\tau$.
\end{theorem}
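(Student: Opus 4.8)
The plan is to argue by well-founded induction on $\wggbu{\tau}$ with respect to the lexicographic order $\prec$. By Lemma~\ref{lemma:wggbu}, each recursive call performed by $\Search(\tau,\DBof{G})$ is on a premise $\tau'$ of an instance of a $\GBUof{G}$-rule with conclusion $\tau$, so $\wggbu{\tau'}\prec\wggbu{\tau}$; this both guarantees termination and supplies the induction order. The induction step follows the case split \ref{search:1}--\ref{search:5} defining $\Search$, so there are two things to check: (i) the case analysis is exhaustive and each non-deterministic choice prescribed in \ref{search:3}--\ref{search:5} is actually available; and (ii) every recursive call is on a sequent $\tau'$ that again satisfies \ref{search:ass1} and \ref{search:ass2}, so that the induction hypothesis yields a $\GBUof{G}$-derivation of $\tau'$ and the displayed rule instance combines these into a derivation of $\tau$ (the side conditions of $\ruleIMPRi$/$\ruleIMPRni$ hold because $\Search$ selects the rule precisely according to whether $A\in\Clo{\Psi}$).

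For (ii) I would propagate \ref{search:ass1} across the invertible rules by contraposing Lemma~\ref{lemma:gbuInv}: in case \ref{search:2}, from $\DBof{G}\neval\tau$ one obtains $\DBof{G}\neval\tau'$ for every premise $\tau'$ via the clause matching the rule applied, namely Lemma~\ref{lemma:gbuInv}\ref{lemma:gbuInv:1}, \ref{lemma:gbuInv:3}, \ref{lemma:gbuInv:2}, \ref{lemma:gbuInv:5}, \ref{lemma:gbuInv:6} for $L\land$, $L\lor$, $R\land$, $\ruleIMPRi$, $\ruleIMPRni$ on regular sequents respectively, and Lemma~\ref{lemma:gbuInv}\ref{lemma:gbuInv:7}--\ref{lemma:gbuInv:9} for the irregular variants. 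Condition \ref{search:ass2} is preserved since every irregular premise appearing in \ref{search:2}--\ref{search:5} keeps the left set $\Omega$ of its (irregular) conclusion, which is $\subseteq\bG$ in the critical cases \ref{search:3}--\ref{search:5} and by \ref{search:ass2} in \ref{search:2}. The only non-immediate propagation is the right premise $\seqgbu{\Omega_B}{F}$ (resp.\ $\seqgbu{\Omega_B}{C_1\lor C_2}$) of $L\imp$ in \ref{search:4}--\ref{search:5}, with $\Omega_B=(\Omega\setminus\{A\imp B\})\cup\{B\}$: writing $\Psi=\Omega\setminus\{A\imp B\}$, Lemma~\ref{lemma:gbuInv}\ref{lemma:gbuInv:4} together with $\DBof{G}\neval\tau$ gives $\DBof{G}\neval\seqgbu{\Omega_B}{F}$, and this premise is regular so \ref{search:ass2} is vacuous.

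Task (i) is the crux, as anticipated before the statement. Exhaustiveness is clear once \ref{search:1} is given precedence: a non-axiom $\tau$ is either non-critical (case \ref{search:2}) or critical and then of one of the shapes in \ref{search:3}--\ref{search:5}. The delicate point is that \ref{search:2} presupposes an applicable invertible rule; this exists for every non-critical, non-axiom sequent except possibly an irregular $\tau=\seqgbui{\Omega}{F}$ with $F\in\Prime$, and I would rule this out: by \ref{search:ass2} $\Omega\subseteq\bG$, and since the $\FRJof{G}$-axiom $\seqfrji{}{\bGat\setminus\{F\},\bGimp}{F}$ is provable, saturation \ref{DB2} puts into $\DBof{G}$ a subsuming sequent $\seqfrji{}{\Theta}{F}$ (empty stable set) with $\bG\setminus\{F\}\subseteq\Theta$; were $F\notin\Omega$ we would get $\Omega\subseteq\bG\setminus\{F\}\subseteq\Theta$, hence $\DBof{G}\eval\tau$ against \ref{search:ass1}, so in fact $F\in\Omega$ and $\tau$ is an axiom. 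Finally, availability of the choices in the critical cases is obtained by contraposing the success lemmas: Lemma~\ref{lemma:gbuiOr} gives, from $\DBof{G}\neval\tau$ in \ref{search:3}, some $k$ with $\DBof{G}\neval\seqgbui{\Omega}{C_k}$; Lemma~\ref{lemma:gbuSuccAt} (using $F\notin\Omega$, since $\tau$ is not an axiom) gives, in \ref{search:4}, some $A\imp B\in\Omega$ with $\DBof{G}\neval\seqgbui{\Omega}{A}$; and Lemma~\ref{lemma:gbuSuccOr} gives, in \ref{search:5}, some $Z\in\Upsilon\cup\{C_1,C_2\}$ with $\DBof{G}\neval\seqgbui{\Omega}{Z}$. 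The chosen premises satisfy \ref{search:ass1} by this very choice and \ref{search:ass2} as in (ii), so the induction hypothesis applies and the rule instance of \ref{search:3}--\ref{search:5} assembles the sought $\GBUof{G}$-derivation of $\tau$.
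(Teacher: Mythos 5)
Your proof is correct and follows essentially the same route as the paper's: induction on $\wggbu{\tau}$, propagation of \ref{search:ass1} to the premises by contraposing Lemma~\ref{lemma:gbuInv}, and availability of the choices in the critical cases by contraposing Lemmas~\ref{lemma:gbuiOr}, \ref{lemma:gbuSuccAt} and \ref{lemma:gbuSuccOr}. You even spell out the one point the paper dismisses as ``one can easily check'' in case~\ref{search:2} --- that a non-critical irregular sequent $\seqgbui{\Omega}{F}$ with $F\in\Prime$ must already be an axiom, which you correctly derive from the irregular $\FRJof{G}$-axioms together with saturation and \ref{search:ass1}.
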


\begin{proof}
  We prove the assertion by induction on $\wggbu{\tau}$.  Note that,
  whenever we perform a recursive call $\Search(\tau',\DBof{G})$, it
  holds that $\wggbu{\tau'}\prec \wggbu{\tau}$.
Thus, by the induction hypothesis, we can assume that:

\begin{enumerate}[label=(\dag),ref=(\dag)]
\item\label{theo:search:IH}
 every recursive call  $\Search(\tau',\DBof{G})$ yields a
  $\GBUof{G}$-derivation of $\tau'$, provided that $\tau'$ and $\DBof{G}$
  satisfy assumptions~\ref{search:ass1} and~\ref{search:ass2}.
\end{enumerate}

\noindent
We have to show that, in each of the
cases~\ref{search:1}--\ref{search:5}, a $\GBUof{G}$-derivation $\Tcal$
of $\tau$ is built.  Case~\ref{search:1} is immediate.  Let us
consider Case~\ref{search:2}, namely $\tau$ is non-critical.  One can
easily check that at least an application of a rule $\Rcal$, as
displayed in~\ref{search:2}, is possible.  We have to show that each
premise $\tau_j$ of the selected rule $\Rcal$ satisfies
assumptions~\ref{search:ass1} and~\ref{search:ass2}. Le us consider
assumption~\ref{search:ass2}; if $\tau_j$ is regular,
then~\ref{search:ass2} trivially holds.  If $\tau_j$ is irregular,
then, by inspecting the rules of $\GBUof{G}$, one can check that
$\Lhs{\tau_j}=\Lhs{\tau}$; thus, since $\tau$
satisfies~\ref{search:ass2}, $\tau_j$ satisfies~\ref{search:ass2} as
well.  The validity of~\ref{search:ass1} follows by
Lemma~\ref{lemma:gbuInv}.  For instance, let us assume that
$\Rcal=L\lor$ and that the selected application is
  \[
\AXC{$\tau_1\;=\;\seqgbu{A,\Psi}{C}$ }
\AXC{$\tau_2\;=\;\seqgbu{B,\Psi}{C}$ }
\RightLabel{$L\lor$}
    \BIC{$\tau\;=\;\seqgbu{A\lor B,\Psi}{C}$}
\DP
  \]
  Since $\tau$ satisfies~\ref{search:ass1}, we have $\DBof{G}\neval\tau$;
  by Lemma~\ref{lemma:gbuInv}\ref{lemma:gbuInv:3}, it follows that both
  $\DBof{G}\neval\tau_1$ and $\DBof{G}\neval\tau_2$, hence both $\tau_1$ and
  $\tau_2$ satisfy~\ref{search:ass1}.  By~\ref{theo:search:IH}, for
  every $1\leq j \leq n$, $\Tcal_j$ is a $\GBUof{G}$-derivation of
  $\tau_j$, hence $\Tcal$ is a $\GBUof{G}$-derivation of $\tau$.

  Let $\tau$ match Case~\ref{search:3}.  Then,
  $\tau=\seqgbui{\Omega}{C_1\lor C_2}$ and $\Omega\subseteq\bG$.  Let
  $\tau_k= \seqgbui{\Omega}{C_k}$, with $k\in\{1,2\}$.  We have to
  guarantee that there exists $k\in\{1,2\}$ such that
  $\DBof{G}\neval \tau_k$.  If both $\DBof{G}\eval \tau_1$ and
  $\DBof{G}\eval \tau_2$ hold, by Lemma~\ref{lemma:gbuiOr} we would
  conclude $\DBof{G}\eval\tau$, in contradiction with the fact that
  $\tau$ and $\DBof{G}$ satisfy~\ref{search:ass1}.  Thus, we can
  choose $k$ such that $\DBof{G}\neval \tau_k$.  Since $\tau_k$ and
  $\DBof{G}$ satisfy assumptions~\ref{search:ass1}
  and~\ref{search:ass2}, by~\ref{theo:search:IH} $\Tcal_k$ is a
  $\GBUof{G}$-derivation of $\tau_k$, hence $\Tcal$ is a
  $\GBUof{G}$-derivation of $\tau$.

  Let $\tau$ match Case~\ref{search:4}.  Since Case~\ref{search:1}
  does not hold, $\tau$ is not a $\GBUof{G}$-axiom, hence
  $F\not\in\Omega$.  Let us assume that for every $A\imp B\in\Omega$,
  we have $\DBof{G}\eval\seqgbui{\Omega}{A}$.  By
  Lemma~\ref{lemma:gbuSuccAt}, it would follow $\DBof{G}\eval\tau$,
  against the assumption~\ref{search:ass1} of the lemma.  Thus, we can
  pick $A\imp B\in\Omega$ such that
  $\DBof{G}\neval \seqgbui{\Omega}{A}$.  By~\ref{theo:search:IH},
  $\Tcal_1$ is a $\GBUof{G}$-derivation of
  $\tau_1=\seqgbui{\Omega}{A}$.  By~\ref{theo:search:IH} and
  Lemma~\ref{lemma:gbuInv}\ref{lemma:gbuInv:4}, $\Tcal_2$ is a
  $\GBUof{G}$-derivation of $\tau_2=\seqgbu{\Omega_B}{F}$.  We
  conclude that $\Tcal$ is a $\GBUof{G}$-derivation of
  $\tau=\seqgbu{\Omega}{F}$.

  We observe that, if $\tau$ does not match any of the conditions in
  cases~\ref{search:1}--\ref{search:4}, then $\tau$ has the form
  stated in Case~\ref{search:5}.  Case~\ref{search:5} follows along
  the lines of Case~\ref{search:4}, exploiting
  Lemma~\ref{lemma:gbuSuccOr}.  
\end{proof}

By the correctness of  $\Search$,
it follows that $\GBUof{G}$ and
 $\FRJof{G}$ are dual calculi,
in the sense that 
provability in  $\GBUof{G}$
is the complement of provability in $\FRJof{G}$:

\begin{theorem}\label{theo:GBU-FRJ}
$\proves{\GBUof{G}} G$ iff
$\nproves{\FRJof{G}} G$. 
\end{theorem}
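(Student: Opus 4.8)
The plan is to prove the two directions separately, reading the forward direction off soundness and the backward direction off the adequacy of proof-search combined with the correctness of $\Search$.

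For the implication $\proves{\GBUof{G}} G$ implies $\nproves{\FRJof{G}} G$, I would simply combine the two soundness results. By Theorem~\ref{theo:GBUsound}, $\proves{\GBUof{G}} G$ gives $G\in\IPL$. By Theorem~\ref{theo:FRJsound}, $\proves{\FRJof{G}} G$ entails $G\not\in\IPL$; contrapositively, $G\in\IPL$ forces $\nproves{\FRJof{G}} G$. Chaining these, $\proves{\GBUof{G}} G$ implies $\nproves{\FRJof{G}} G$.

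For the converse, assume $\nproves{\FRJof{G}} G$. Since $\FSearch$ is adequate (Theorem~\ref{theo:fsearchAdequate}) and $G$ is not provable, the call $\FSearch{G}$ terminates returning a saturated database $\DBof{G}$ for $G$. I would then apply the correctness of $\Search$ (Theorem~\ref{theo:search}) to the regular sequent $\tau=\seqgbu{}{G}$: once its hypotheses~\ref{search:ass1} and~\ref{search:ass2} are met, $\Search(\seqgbu{}{G},\DBof{G})$ returns a $\GBUof{G}$-derivation of $\seqgbu{}{G}$, which is precisely a witness of $\proves{\GBUof{G}} G$. Assumption~\ref{search:ass2} holds vacuously, as $\seqgbu{}{G}$ is regular, so the whole argument reduces to checking~\ref{search:ass1}, namely $\DBof{G}\neval\seqgbu{}{G}$.

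To settle this, I would prove the equivalence $\DBof{G}\eval\seqgbu{}{G}$ iff $\proves{\FRJof{G}} G$. Unfolding the definition of $\eval$, $\DBof{G}\eval\seqgbu{}{G}$ holds iff $\DBof{G}$ contains a sequent $\seqfrj{\G}{G}$, the side condition $\emptyset\subseteq\Clo{\G}$ being trivial. If such a sequent lies in $\DBof{G}$, then by~\ref{DB1} it is provable and, by the definition of an $\FRJof{G}$-derivation of $G$, witnesses $\proves{\FRJof{G}} G$. Conversely, if $\proves{\FRJof{G}} G$, there is a provable regular sequent $\seqfrj{\G'}{G}$; by saturation~\ref{DB2} some $\s'\in\DBof{G}$ subsumes it, and by the definition of $\sqsubseteq$ such an $\s'$ is regular with the same right formula, i.e.\ $\s'=\seqfrj{\G}{G}$ with $\G'\subseteq\G$, so $\DBof{G}\eval\seqgbu{}{G}$. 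Taking contrapositives, $\nproves{\FRJof{G}} G$ yields $\DBof{G}\neval\seqgbu{}{G}$, which is exactly~\ref{search:ass1}, and Theorem~\ref{theo:search} then delivers $\proves{\GBUof{G}} G$.

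The main obstacle is the bookkeeping of this last equivalence: one must be sure that membership of a sequent $\seqfrj{\G}{G}$ in $\DBof{G}$ is equivalent to $\proves{\FRJof{G}} G$. This rests on the definition of an $\FRJof{G}$-derivation of $G$ as a derivation of \emph{some} regular sequent with right formula $G$, and on the fact that $\sqsubseteq$ preserves both the regular/irregular status and the right formula of a sequent, so saturation can never swap $G$ on the right for a different formula.
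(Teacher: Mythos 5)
Your proof is correct and follows essentially the same route as the paper: soundness of both calculi gives one direction, and for the converse one takes a saturated database for $G$, checks that $\DBof{G}\neval\seqgbu{}{G}$ (hence assumptions~\ref{search:ass1} and~\ref{search:ass2} hold), and invokes the correctness of $\Search$. The only difference is cosmetic: you establish the full equivalence between $\DBof{G}\eval\seqgbu{}{G}$ and $\proves{\FRJof{G}} G$, whereas the paper argues only the single contrapositive direction it needs via~\ref{DB1}.
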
  

\begin{proof}
If $\proves{\GBUof{G}} G$,
 by the Soundness of $\GBUof{G}$  (Theorem~\ref{theo:GBUsound})
we get $G\in\IPL$;
 by the  Soundness of $\FRJof{G}$
(Theorem~\ref{theo:FRJsound})
we conclude $\nproves{\FRJof{G}} G$.

Conversely, let $\nproves{\FRJof{G}} G$, let $\DBof{G}$ be a saturated
database for $G$ and let $\tau$ be the $\GBUof{G}$-sequent
$\seqgbu{}{G}$.  Note that $\DBof{G}$ does not contain any regular
sequent of the kind $\s=\seqfrj{\G}{G}$; otherwise, by~\ref{DB1}, we
would get $\proves{\FRJof{G}}\s$, contradicting the assumption that
$G$ is not provable in $\FRJof{G}$.  This implies
$\DBof{G}\neval \tau$, hence $\tau$ and $\DBof{G}$ satisfy the
assumptions~\ref{search:ass1} and~\ref{search:ass2} of \Search.  By
the correctness of \Search (Theorem~\ref{theo:search}),
$\Search(\tau,\DBof{G})$ computes a $\GBUof{G}$-derivation of $\tau$;
we conclude $\proves{\GBUof{G}} G$.  
\end{proof}

As a corollary, we get the completeness of $\FRJof{G}$ and $\GBUof{G}$:

\begin{theorem}[Completeness of $\FRJof{G}$ and $\GBUof{G}$]

  \begin{enumerate}[label=(\roman*),ref=(\roman*)]
\item\label{theo:compl1}
$G\not\in\IPL$ implies $\proves{\FRJof{G}} G$.

  \item\label{theo:compl2}
$G\in\IPL$ implies $\proves{\GBUof{G}} G$.
  \end{enumerate}
 \end{theorem}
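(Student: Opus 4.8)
The plan is to derive this corollary directly from the duality Theorem~\ref{theo:GBU-FRJ} together with the two soundness results, so that essentially no new work is required; both implications are contrapositive repackagings of facts already available.

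First, for~\ref{theo:compl1}, I would argue by contraposition on the soundness of $\GBUof{G}$. By Theorem~\ref{theo:GBUsound}, $\proves{\GBUof{G}} G$ implies $G\in\IPL$; taking the contrapositive, $G\not\in\IPL$ implies $\nproves{\GBUof{G}} G$. Now Theorem~\ref{theo:GBU-FRJ} states $\proves{\GBUof{G}} G$ iff $\nproves{\FRJof{G}} G$; negating both sides, $\nproves{\GBUof{G}} G$ iff $\proves{\FRJof{G}} G$. Chaining these two facts yields that $G\not\in\IPL$ implies $\proves{\FRJof{G}} G$, which is exactly~\ref{theo:compl1}.

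Second, for~\ref{theo:compl2}, I would argue symmetrically, this time by contraposition on the soundness of $\FRJof{G}$. By Theorem~\ref{theo:FRJsound}, $\proves{\FRJof{G}} G$ implies $G\not\in\IPL$; contrapositively, $G\in\IPL$ implies $\nproves{\FRJof{G}} G$. Applying Theorem~\ref{theo:GBU-FRJ} in the stated direction, $\nproves{\FRJof{G}} G$ implies $\proves{\GBUof{G}} G$. Hence $G\in\IPL$ implies $\proves{\GBUof{G}} G$, establishing~\ref{theo:compl2}.

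There is no genuine obstacle at this stage: each implication is a one-line consequence of the equivalence $\proves{\GBUof{G}} G$ iff $\nproves{\FRJof{G}} G$ combined with the matching soundness theorem. The real difficulty lies entirely upstream, in establishing Theorem~\ref{theo:GBU-FRJ}; specifically, the non-trivial direction ``$\nproves{\FRJof{G}} G$ implies $\proves{\GBUof{G}} G$'' rests on the correctness of the procedure \Search (Theorem~\ref{theo:search}) run on a saturated database, which in turn relies on the inversion properties of the evaluation relation $\eval$ collected in Lemmas~\ref{lemma:gbuInv}, \ref{lemma:gbuiOr}, \ref{lemma:gbuSuccAt} and~\ref{lemma:gbuSuccOr}. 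Once that machinery is in place, the completeness corollary follows immediately.
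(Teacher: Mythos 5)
Your proposal is correct and follows exactly the paper's own argument: both implications are obtained by taking the contrapositive of the appropriate soundness theorem (Theorem~\ref{theo:GBUsound} for~\ref{theo:compl1}, Theorem~\ref{theo:FRJsound} for~\ref{theo:compl2}) and then applying the duality Theorem~\ref{theo:GBU-FRJ}. Your closing remark that the real work lies in Theorem~\ref{theo:GBU-FRJ} and the correctness of \Search also matches the paper's structure.
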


 \begin{proof}
Let $G\not\in\IPL$.
 By  the Soundness of $\GBUof{G}$
 (Theorem~\ref{theo:GBUsound}),
  $\nproves{\GBUof{G}} G$;
 by Theorem~\ref{theo:GBU-FRJ},  $\proves{\FRJof{G}} G$.
Let $G\in\IPL$.
 By  the Soundness of $\FRJof{G}$
 (Theorem~\ref{theo:FRJsound}),
  $\nproves{\FRJof{G}} G$;
 by Theorem~\ref{theo:GBU-FRJ},  $\proves{\GBUof{G}} G$.
 \end{proof}

\section{Minimality}
\label{sec:minimality}

In this section we prove that, given a non-valid formula $G$, one can
build an $\FRJof{G}$-derivation $\Dcal$ of $G$ such that $\Mod{\Dcal}$
is a countermodel of $G$ having minimal height. Such a derivation can
be constructed by tweaking the proof-search procedure defined in
Sec.~\ref{sec:proofsearch}.

Let $G\not\in\IPL$; the \emph{height} of $G$, denoted by $\height{G}$,
is the minimum height of a countermodel for $G$; formally:
\[
\height{G}\;=\;\min \{\,
\height{\K}~|~\mbox{$\K=\stru{P,\leq,\rho,V}$ and $\rho\nforcing G$}
\,\}  
\]
Note that $\height{G}=0$ iff $G$ is not classically valid.  For
instance, the formulas $S$ and $T$ of Ex.~\ref{ex:frjNishimura} have
height 2, the formula $K$ of Ex.~\ref{ex:frjKP} has height 1.  Let
$\Dcal$ be an $\FRJof{G}$-derivation of $G$.  Since $\Mod{\Dcal}$ is a
countermodel for $G$ (see Theorem~\ref{theo:soundFRJ}), we have
$\height{\Mod{\Dcal}}\geq \height{G}$.  We show that we can build an
$\FRJof{G}$-derivation $\tilde\Dcal$ of $G$ such that
$\height{\Mod{\tilde\Dcal}} = \height{G}$.  To this aim, we prove
that:

\begin{enumerate}[label=(K\arabic*), ref=(K\arabic*)]
\item\label{minimality:1} given a countermodel $\K$ for $G$, 
  there exists an $\FRJof{G}$-derivation $\tilde\Dcal$ of $G$
  such that $\height{\Mod{\tilde\Dcal}}\leq \height{\K}$.
\end{enumerate}

\noindent
By~\ref{minimality:1}, choosing as $\K$ a countermodel for $G$ having
the minimal height $\height{G}$, we get an $\FRJof{G}$-derivation
$\tilde\Dcal$ of $G$ such that $\height{\Mod{\tilde\Dcal}}
=\height{G}$.

Let  $\Dcal$ be an $\FRJof{G}$-derivation of $G$.
The height of the countermodel $\Mod{\Dcal}$
is determined by the maximum number of applications
of join rules along a branch of $\Dcal$.
To account for this, 
we introduce the notion of rank.
Let $\s$ be a sequent occurring in  $\Dcal$;
the \emph{rank} of  $\s$, denoted by $\Rn{\s}$,
is inductively defined  as follows:

\begin{itemize}
\item If $\s$ is an irregular axiom, then $\Rn{\s}=-1$.

\item If $\s$ is a regular axiom, then $\Rn{\s}=0$.

\item If $\s$ is not an axiom, let
  \[
\AXC{$\s_1 \;\cdots\;\s_n$}
\RightLabel{$\Rcal$}
\UIC{$\s $}
\DP    
\]
be the rule applied to get $\s$ ($n\geq 1$). Then:
\[
\Rn{\s}\;=\;\max\{\,\Rn{\s_1},\dots,\Rn{\s_n}\,\}\,+\,c\quad\textrm{where}\; c\;=\;
\begin{cases}
\;1\quad\mbox{if $\Rcal\in\{\ruleJOINA,\ruleJOINO\}$}  
\\
\;0\quad\mbox{otherwise}
\end{cases}
\] 
\end{itemize}

\noindent
The rank of $\Dcal$, denoted by  $\Rn{\Dcal}$,
is the rank  of the root sequent of $\Dcal$.
One can easily prove that, for every regular sequent
$\s$  in $\Dcal$, $\Rn{\s}$ coincides with
the height of the world $\phi(\s)$ in $\Mod{\s}$
(where $\phi$ is the map associated with $\Dcal$).
As an immediate consequence we get:

\begin{lemma}\label{lemma:rnk}
  Let $\Dcal$ be an $\FRJof{G}$-derivation of $G$.
  Then,
  $\Rn{\Dcal}=\height{\Mod{\Dcal}}$.  
\qed
\end{lemma}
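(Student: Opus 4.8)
The plan is to reduce the statement to the correspondence recorded in the observation immediately preceding the lemma, namely that $\Rn{\s}=\height{\phi(\s)}$ holds for every regular sequent $\s$ occurring in $\Dcal$. Since $\Dcal$ is an $\FRJof{G}$-derivation of $G$, its root sequent $\s_0$ has the form $\seqfrj{\G}{G}$ and is therefore regular; moreover $\phi(\s_0)=\rho$ is the root of $\Mod{\Dcal}$, exactly as already observed in the proof of Theorem~\ref{theo:soundFRJ}. Hence $\Rn{\Dcal}=\Rn{\s_0}=\height{\phi(\s_0)}=\height{\rho}=\height{\Mod{\Dcal}}$, which is the assertion. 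So the lemma is genuinely an immediate consequence; the real content lies in the assumed observation, and I would make the argument self-contained by proving it.

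To establish that observation I would prove, by induction on the sub-derivation rooted at $\s$, the following uniform statement for \emph{every} sequent $\s$ of $\Dcal$ (regular or irregular):
\[
\Rn{\s}\;=\;\max\{\,\height{\b}\mid \b\in\PS{\Dcal}\ \text{and}\ \b\mapstos \s\,\},
\]
adopting the convention that the maximum over the empty set is $-1$. The base cases match the definition of rank: an irregular axiom is not a p-sequent and has nothing above it, so the right-hand side is $-1=\Rn{\s}$, while a regular axiom is a maximal (hence final) world of $\Mod{\Dcal}$, so $\height{\s}=0=\Rn{\s}$. For the inductive step I split on the last rule. If it is neither an axiom nor a join, the conclusion $\s$ is not a p-sequent, so the p-sequents weakly above $\s$ are precisely those weakly above one of the premises; since such rules leave the rank unchanged, $\Rn{\s}=\max_j\Rn{\s_j}$ and the induction hypothesis yields the equality. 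If $\s$ is the conclusion of $\ruleJOINA$ or $\ruleJOINO$ with premises $\s_1,\dots,\s_n$, then $\s$ is itself a p-sequent and the p-sequents strictly above $\s$ are exactly those weakly above some $\s_j$; the height recurrence then gives $\height{\s}=1+\max_j\Rn{\s_j}=\Rn{\s}$.

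The observation then follows from this statement: for a regular sequent $\s$ the path from $\s$ up to $\phi(\s)$ uses only the single-premise rules $\land$ and $\ruleIMPi$, so it does not branch, and consequently every p-sequent lying above $\s$ lies above $\phi(\s)$; since $\phi(\s)$ is the least world weakly above it, the maximum of the heights collapses to $\height{\phi(\s)}$, and the displayed identity specializes to $\Rn{\s}=\height{\phi(\s)}$. The hard part will be the bookkeeping through the irregular sequents, on which $\phi$ and the model structure are blind: the delicate case is a join whose premises trace back only to irregular axioms, where the conclusion is a final world of height $0$. It is precisely the value $-1$ attached to irregular axioms, combined with the $+1$ of a join, that makes rank and height coincide in this degenerate case; verifying this edge case together with the non-branching of the $\phi$-path is the only point that requires genuine care, the remaining steps being routine.
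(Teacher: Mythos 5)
Your proof follows exactly the paper's route: the paper obtains the lemma as an immediate consequence of the (stated but unproved) observation that $\Rn{\s}=\height{\phi(\s)}$ for every regular sequent $\s$ of $\Dcal$, which is precisely the reduction you make for the root sequent. Your induction establishing that observation --- including the $-1/+1$ bookkeeping at join rules whose premises trace back only to irregular axioms --- is correct and merely supplies the details the paper leaves to the reader.
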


Let $\K=\stru{P,\leq,\rho, V}$ be a countermodel for $G$ and $\a\in
P$. We set:
\begin{itemize}
\item $\a\forcings H$ iff $\a\forcing H$ and either $H\in\PV$ or
  $H=A\imp B$ and $\a\nforcing A$.


\item 
$\Lambda_\a\;=\; \{\,\mbox{$A\in\Sfl{G}~|~\a\forcing A$}\,\}$.

\item
  $\Lambdas_\a\;=\; \{\,\mbox{$A\in\Sfl{G}~|~\a\forcings A$}\,\}$.

\item 
$\Omega_\a\;=\; \{\,\mbox{$C\in\Sfr{G}~|~\a\nforcing  C$}\,\}$.
\end{itemize}

\noindent
To prove~\ref{minimality:1}, we exploit the following lemma (the proof
is deferred to the end of this section):

\begin{lemma}\label{lemma:minMod}
  Let $\K=\stru{P,\leq,\rho, V}$ be a countermodel for $G$, let $\a\in
  P$ and $C\in\Omega_\a$.  
  \begin{itemize}
  \item There exists an $\FRJof{G}$-derivation
  $\derfrji{\a}{C}$ of $\sigmairr{\a}{C} =\seqfrji{\Sigma}{\Theta}{C}$ such that:
  \begin{enumerate} [label=(\roman*), ref=(\roman*)]
  \item \label{lemma:minMod:1} $\Rn{ \derfrji{\a}{C} } \,<\,
    \height{\a}$;
  \item \label{lemma:minMod:2} $\Sigma\,\subseteq\, \Lambdas_\a \,
    \subseteq \,\Sigma\cup\Theta$.
  \end{enumerate}
  
\item There exists an $\FRJof{G}$-derivation $\derfrjr{\a}{C}$ of
  $\sigmareg{\a}{C}=\seqfrj{\G}{C}$ such that:
  \begin{enumerate} [start=3, label=(\roman*), ref=(\roman*)]
  \item   \label{lemma:minMod:3}
    $\Rn{ \derfrjr{\a}{C}  }\,\leq\,\height{\a}$;
        
  \item   \label{lemma:minMod:4}
  there is $\b\in P$ such that
  $\a\leq \b$ and $\Lambdas_\b\,\subseteq\, \G$.
\qed
\end{enumerate}
\end{itemize}
\end{lemma}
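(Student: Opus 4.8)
The plan is to prove the irregular and regular statements \emph{simultaneously} by a single well-founded induction whose measure, for a call on a world $\a$ and a right formula $C$, is the triple $(\height{\a},\,t,\,\size{C})$ ordered lexicographically, with $t=0$ for the irregular claim and $t=1$ for the regular one. The middle component is the crucial bookkeeping device: the regular construction will invoke the irregular one at the \emph{same} world $\a$ (on an antecedent formula unrelated to $C$, possibly larger), whereas the irregular construction descends to the regular one only at a \emph{strictly higher} world $\b>\a$. Thus within a fixed height Part~1 always sits strictly below Part~2, and the only cross-height calls strictly decrease $\height{\a}$, so no vicious cycle arises.

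Two semantic facts about closures drive every case, and I would isolate them first. For $A\in\Sfl{G}$ one has $\a\forcing A$ iff $A\in\Clo{\Lambdas_\a}$: the forward direction is a routine structural induction on $A$ (forced atoms and strongly forced implications lie in $\Lambdas_\a$ directly; a forced $A'\imp B'$ with $\a\forcing A'$ gives $\a\forcing B'$ and one closes under $A'\imp X$; conjunctions and disjunctions close under $\land$ and $\lor$), while the converse is immediate from~\ref{propClo:1} since $\a\forcing\Lambdas_\a$. From this I derive closure-monotonicity of strong forcing, $\a\leq\g\Rightarrow\Lambdas_\a\subseteq\Clo{\Lambdas_\g}$, again by cases on atoms versus implications. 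Consequently, whenever $\a\leq\g$ and $\Lambdas_\g\subseteq\G$, both $\Lambdas_\a\subseteq\Clo{\G}\cap\bG$ and (for any $A$ forced at such a $\g$) $A\in\Clo{\G}$ hold; these are exactly the side conditions demanded by $\ruleIMPi$ and $\ruleIMPni$.

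For the irregular claim I case on the shape of $C\in\Omega_\a$. If $C\in\Prime$ the rule $\ruleAXI$ already yields $\seqfrji{}{(\bGat\setminus\{C\})\cup\bGimp}{C}$ with rank $-1$ and with $\Lambdas_\a$ sitting inside the right-hand zone. The connectives $\land$ and $\lor$ use the corresponding irregular rules applied to the induction hypotheses for the structurally smaller immediate subformulas; the $\lor$ side conditions follow since each $\Sigma_k\subseteq\Lambdas_\a\subseteq\Sigma_{3-k}\cup\Theta_{3-k}$, and invariant~\ref{lemma:minMod:2} is preserved under union of the stable sets and intersection of the $\Theta$'s. The case $C=A\imp B$ splits on whether $\a\forcing A$: if so I apply the irregular $\ruleIMPi$ to the induction hypothesis for $B$ at $\a$, using the closure fact to pick a minimal $\Lambda$ with $A\in\Clo{\Sigma\cup\Lambda}$ (matching~\ref{PS1}); if not, I pass to a strictly higher $\b$ with $\b\forcing A$, $\b\nforcing B$, invoke the \emph{regular} claim for $B$ at $\b$, and apply $\ruleIMPni$, choosing the retained $\Theta$ as a maximal extension of $\Lambdas_\a$ inside $\Clo{\G}\cap\bG$ avoiding $A$ (possible since $A\notin\Clo{\Lambdas_\a}$ by the converse closure fact, and matching~\ref{PS2}). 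In each subcase the rank is unchanged or bounded by $\height{\b}<\height{\a}$, giving~\ref{lemma:minMod:1}.

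The regular claim is the delicate part, and I expect the \emph{join} constructions to be the main obstacle. For $C=A_1\land A_2$ and $C=A\imp B$ I reduce to the regular claim on a smaller formula (at $\a$, respectively at a witness $\b\geq\a$ forcing $A$), reusing the retained witness world to discharge the $A\in\Clo{\G}$ condition of the regular $\ruleIMPi$. The genuinely new cases are $C\in\Prime$ and $C=C_1\lor C_2$, built by one application of $\ruleJOINA$, resp.\ $\ruleJOINO$: as premises I take the irregular derivations $\derfrji{\a}{A}$ for each antecedent $A$ of a strongly forced implication $A\imp Z\in\Lambdas_\a$ (and, for $\ruleJOINO$, also $\derfrji{\a}{C_1}$ and $\derfrji{\a}{C_2}$); each has rank $<\height{\a}$, so the join gives rank $\leq\height{\a}$, yielding~\ref{lemma:minMod:3}. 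Conditions~\ref{sc:J1} and~\ref{sc:J2} hold because every premise stable set lies in $\Lambdas_\a$ while $\Lambdas_\a$ lies in each $\Sigma_j\cup\Theta_j$, and every stable implication of a premise is strongly forced at $\a$, hence has antecedent in $\Upsilon$. The crux is a purely set-theoretic check that $\Lambdas_\a\subseteq\Lhs{\s}$: a forced atom lies in some $\Sigat_j$ or in every $\That_j$, so survives in $\Sigat\cup\That$ (and it is not the discarded $F$, since $\a\nforcing F$); a strongly forced implication, being in $\Sigimp_j$ or $\Thimp_j$ for each $j$, either survives via $\Sigimp$ or lies in every $\Thimp_j$ and, having antecedent in $\Upsilon$, survives the restriction defining $\Thimp$. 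Taking $\b=\a$ then gives~\ref{lemma:minMod:4}. When no implication is strongly forced at $\a$ (so $\Lambdas_\a\subseteq\PV$) I instead discharge $C\in\Prime$ directly by the axiom $\ruleAXR$, whose left-hand side $\bGat\setminus\{C\}$ already contains $\Lambdas_\a$. The persistent difficulty is keeping the four join-zones consistent with the side conditions while respecting the rank bound; the height induction is precisely what legitimises the recursive calls to the regular claim at strictly higher worlds.
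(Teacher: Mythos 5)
Your proposal is correct and follows essentially the same route as the paper's proof: the same lexicographic induction on the triple $(\height{\a},\,\tpm{\genericArrow},\,\size{C})$ (the paper phrases it as a main/secondary/third mutual induction), the same preliminary closure facts (the paper's Lemma on $\Lambda_\a=\Clo{\Lambda_\a}=\Clo{\Lambdas_\a}$), and the same case-by-case constructions, down to the choice of minimal $\Lambda$ for $\ruleIMPi$, maximal $\Theta$ for $\ruleIMPni$, and the element-wise check that $\Lambdas_\a$ survives into the conclusion of the join rules. No gaps.
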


\noindent
Point~\ref{minimality:1} follows by the above lemma, 
 taking as  $\tilde\Dcal$ the $\FRJof{G}$-derivation
$\derfrjr{\rho}{G}$ associated with the root $\rho$ of $\K$.
Indeed,  the root sequent of $\tilde\Dcal$ is
$\sigmareg{\rho}{G}=\seqfrj{\G}{G}$,
hence $\tilde\Dcal$ is an $\FRJof{G}$-derivation of $G$.  
By Theorem~\ref{theo:soundFRJ}, $\Mod{\tilde\Dcal}$ is a countermodel for
$G$ and, by point~\ref{lemma:minMod:3} of the lemma,
$\Rn{\tilde\Dcal}\leq \height{\rho}$, namely $\Rn{\tilde\Dcal}\leq
\height{\K}$.  By Lemma~\ref{lemma:rnk}, we get
$\height{\Mod{\tilde\Dcal}}\leq \height{\K}$, which 
proves~\ref{minimality:1}. As discussed above,
from~\ref{minimality:1} we conclude:

\begin{theorem}\label{theo:minMod}
  Let $G\not\in \IPL$.
  Then, there exists an $\FRJof{G}$-derivation $\tilde\Dcal$ of $G$ such that
  $\height{\Mod{\tilde\Dcal}}=\height{G}$. \qed
\end{theorem}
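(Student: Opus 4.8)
The plan is to derive the theorem as a direct corollary of property~\ref{minimality:1}, combined with the soundness of the extracted model (Theorem~\ref{theo:soundFRJ}). The key preliminary observation is that $\height{G}$ is the minimum of a nonempty set of non-negative integers, hence it is attained: since $G\not\in\IPL$, there is at least one countermodel for $G$, so some countermodel $\K$ satisfies $\height{\K}=\height{G}$.

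First I would apply property~\ref{minimality:1} to this minimal countermodel $\K$. This yields an $\FRJof{G}$-derivation $\tilde\Dcal$ of $G$ with $\height{\Mod{\tilde\Dcal}}\leq\height{\K}=\height{G}$. Concretely, one takes $\tilde\Dcal=\derfrjr{\rho}{G}$, the regular derivation associated with the root $\rho$ of $\K$ supplied by Lemma~\ref{lemma:minMod}; its root sequent has the form $\seqfrj{\G}{G}$, so $\tilde\Dcal$ is indeed an $\FRJof{G}$-derivation of $G$, and point~\ref{lemma:minMod:3} together with Lemma~\ref{lemma:rnk} provides the upper bound on $\height{\Mod{\tilde\Dcal}}$.

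For the reverse inequality I would invoke Theorem~\ref{theo:soundFRJ}: the model $\Mod{\tilde\Dcal}$ extracted from $\tilde\Dcal$ is itself a countermodel for $G$. By the very definition of $\height{G}$ as the minimum height over all countermodels for $G$, this forces $\height{\Mod{\tilde\Dcal}}\geq\height{G}$. Combining the two inequalities yields $\height{\Mod{\tilde\Dcal}}=\height{G}$, as claimed.

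Since property~\ref{minimality:1}, and hence the whole statement, rests entirely on Lemma~\ref{lemma:minMod}, the theorem proper presents no real difficulty: the only points to verify are that the minimum defining $\height{G}$ is attained and that the two inequalities are assembled in the correct direction. The genuine work — constructing, by induction along the worlds of $\K$ and controlling the rank via points~\ref{lemma:minMod:1} and~\ref{lemma:minMod:3}, a derivation whose join-rule depth does not exceed the height of the corresponding world — is isolated in Lemma~\ref{lemma:minMod}, which I am taking as given. The hard part is therefore entirely upstream of this theorem.
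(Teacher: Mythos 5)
Your proposal is correct and follows exactly the paper's own route: it instantiates property~\ref{minimality:1} (which the paper derives from Lemma~\ref{lemma:minMod} via $\derfrjr{\rho}{G}$, point~\ref{lemma:minMod:3} and Lemma~\ref{lemma:rnk}) at a countermodel of minimal height, and obtains the reverse inequality from Theorem~\ref{theo:soundFRJ} together with the definition of $\height{G}$. Nothing is missing.
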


We remark that Theorem~\ref{theo:minMod} provides an alternative proof of
the completeness of $\FRJof{G}$.~\footnote{Actually,
  in~\cite{FerFio:2017} the completeness of $\FRJof{G}$ has been proved
  along these lines.}

We have now to prove Lemma~\ref{lemma:minMod}.  We give a constructive
proof to define the sequents $\sigmairr{\a}{C}$ and $\sigmareg{\a}{C}$
and the related derivations, which relies on the following strategy:
\begin{itemize}
\item we visit the model $\K$ top-down, considering the worlds $\a$ of
  $\K$ in increasing order of height;

\item for each world $\a$, we firstly define all the irregular
  sequents $\sigmairr{\a}{C}$ and then all the regular sequents
  $\sigmareg{\a}{C}$;

\item we pick the formulas $C$ of $\Omega_\a$ in increasing order of
  size.
\end{itemize}

\noindent
Since $\rho$ is the bottom world of $\K$ and $G$ is the formula of
maximum size in $\Omega_G$, the $\FRJof{G}$-derivation
$\derfrjr{\rho}{G}$ is obtained at the end of the process.  To
highlight the main insights of the above construction, we present some
examples.


\begin{example}\label{ex:STMinH}
  Let $S$ be the instance of Scott Principle in
  Ex.~\ref{ex:frjNishimura}:
  \[
    \begin{array}{l}
      S \;= \; H\,\imp\, \neg\neg p \lor \neg p
      \qquad
      H\;=\; (\neg\neg p\imp p)\, \imp\, \neg p\lor  p
      \\[1ex]
      \Sfl{S} \:=\: 
      \{\: H,\:\neg p\lor p,\:\neg\neg p,\:\neg p,\:p\:\} 
      \\[.5ex]
      \Sfr{S} \:=\: 
      \{\: S,\:\neg\neg p\lor \neg p,\:\neg\neg p \imp p,\:
      \neg\neg p,\:\neg p,\: p,\: \bot   \:\}    
    \end{array}
    \]
 We have $\height{S}=2$.
 Let us consider the countermodel $\K_S$ for $S$ having height 2 depicted below,
 consisting of the worlds $\a$ and $\b$ of height 0, the world   $\g$ of height 1
 and the root $\rho$ of height 2:
 \begin{center}
   \begin{tikzpicture}
     \path[scale=.42,grow'=up,every node/.style={fill=gray!30,rounded corners},
     level 1/.style = {sibling distance = 13em}, 
     edge from parent/.style={black,draw}]
     node{$\rho$:  }
     [level distance=20mm] 
     child{
       child {node{$\a$:}}
     }
     child{
       node{$\g$: }
       child{
         node{$\b$: $p$}
       }
     }
     ;
   \end{tikzpicture}
 \end{center}
 
 \noindent
 We define the sequents $\sigmairr{\a}{C}$ and $\sigmareg{\a}{C}$
 matching Lemma~\ref{lemma:minMod}.  For each sequent $\s$, we display the
 rule applied to obtain $\s$ and the rank of $\s$.  
Using these annotations, one can immediately build the derivations
 $\derfrji{\a}{C}$ and  $\derfrjr{\a}{C}$.
We traverse the
 model $\K$ downwards, starting from the world $\a$ of height 0. We
 have:
 \[
 \Lambdas_\a\,=\,\{\:\neg p\:\}   
 \qquad
 \Lambda_\a\;=\;\Lambdas_\a\:\cup\:\{\:\neg p\lor p,\:H\:\}
 \qquad
 \Omega_\a \;=\;\{\:\bot,\:p,\:\neg\neg p\:\}
 \]
Note that $H\not\in\Lambdas_\a$ since  $\a\forcing \neg\neg p \imp p$.
 Firstly, we   define all the sequents  $\sigmairr{\a}{C}$,
 where $C\in \Omega_\a$; formulas $C$ are considered
 in  increasing order of size.
 \[
 \begin{array}{lrl|l|l}
   \mbox{Sequent}&&&\mbox{Applied rule}&\mbox{Rank}\\[.5ex]
   \hline
                 &&&&\\[-1.5ex]
   \sigmairr{\a}{\bot} & \tseqfrji{}{p,\:H,\:\neg\neg p,\:\neg p}{\bot} \qquad &  \ruleAXI &-1
   \\
   \sigmairr{\a}{p}  &   
   \tseqfrji{}{H,\:\neg\neg p,\:\neg p}{p}  & \ruleAXI &-1
   \\
   \sigmairr{\a}{\neg \neg p} \qquad &
   \tseqfrji{\neg p}{p,\:H,\:\neg\neg p}{\neg \neg p}\qquad  
   & \ruleIMPi\;  \sigmairr{\a}{\bot} \qquad &-1
 \end{array}
 \]     
 Secondly, we define the sequents $\sigmareg{\a}{C}$,
 where  $C\in \Omega_\a$.
 \[
 \begin{array}{lrlll}
 \sigmareg{\a}{\bot}  &
    \tseqfrj{\neg p}{\bot}  & \ruleJOINA\;  \sigmairr{\a}{p}\qquad & 0
    \\
 \sigmareg{\a}{p} &
    \tseqfrj{\neg p}{p}  & \ruleJOINA\;  \sigmairr{\a}{p}  & 0
    \\
 \sigmareg{\a}{\neg\neg p} \qquad &
    \tseqfrj{\neg p}{\neg\neg p} \qquad & \ruleIMPi\; \sigmareg{\a}{\bot} & 0
   \end{array}
\] 
Let us consider the world $\b$ of height 0. We have:
\[
 \Lambdas_\b\,=\,\{\:p,\:\neg\neg p\:\}   
    \qquad
    \Lambda_\b\,=\,\Lambdas_\b\:\cup\:\{\:\neg p\lor p,\:H\:\}
    \qquad
    \Omega_\b \,=\,\{\:\bot,\:\neg p\:\}
\]  
For $C\in\Omega_\b$,  we   define the sequents  $\sigmairr{\b}{C}$:
\[
   \begin{array}{lrlll}
 \sigmairr{\b}{\bot}\qquad &
    \tseqfrji{}{p,\:H,\:\neg\neg p,\:\neg p}{\bot} \qquad &  \ruleAXI\qquad & -1
      \\
 \sigmairr{\b}{\neg p} &
     \tseqfrji{p}{H,\:\neg\neg p,\:\neg p}{\neg p} &
    \ruleIMPi\;  \sigmairr{\b}{\bot}  &-1
   \end{array}
 \]
For $C\in\Omega_\b$,  sequents  $\sigmareg{\b}{C}$ are:
\[
  \begin{array}{lrlll}
\sigmareg{\b}{\bot}\qquad &
 \tseqfrj{p,\:\neg\neg p}{\bot}\qquad& \ruleJOINA\;\sigmairr{\b}{\neg p}\qquad & 0
     \\
  \sigmareg{\b}{\neg p} &
  \tseqfrj{p,\:\neg\neg p}{\neg p} &
 \ruleIMPi\; \sigmareg{\b}{\bot} & 0
  \end{array}
\]     

\noindent
Let us consider $\g$, the only world of height 1. We have:
\[
\Lambdas_\g\;=\;\Lambda_\g\;=\;\{\,H,\,\neg \neg p\,\}   
    \qquad
    \Omega_\g \;=\;\{\,\bot,\,p,\,\neg p,\,\neg\neg p\imp p\,\}
\]  
For $C\in\Omega_\g$,  sequents  $\sigmairr{\g}{C}$ are:
\[
   \begin{array}{lrlll}
 \sigmairr{\g}{\bot} &\tseqfrji{}{p,\:H,\:\neg\neg p,\:\neg p}{\bot} \qquad &  \ruleAXI &-1
\\
\sigmairr{\g}{p}& \tseqfrji{}{H,\:\neg\neg p,\:\neg p}{p}  & \ruleAXI &-1
\\
 \sigmairr{\g}{\neg p} &
    \tseqfrji{}{H,\:\neg\neg p}{\neg p}  & \ruleIMPni\;\sigmareg{\b}{\bot}\qquad & 0
 \\
\sigmairr{\g}{\neg \neg p\imp p} \qquad&
 \tseqfrji{\neg\neg p}{H,\:\neg p}{\neg \neg p\imp p} \qquad & \ruleIMPi\; \sigmairr{\g}{p} & -1
   \end{array}
 \]
For $C\in \Omega_\g$,
  sequents $\sigmareg{\g}{C}$ are:
\[
   \begin{array}{lrlll}
\sigmareg{\g}{\bot} &
\tseqfrj{p,\:\neg\neg p}{\bot}\qquad& \ruleJOINA\;\sigmairr{\b}{\neg p}\qquad & 0
\\
   \sigmareg{\g}{p}\qquad &
    \tseqfrj{H,\:\neg \neg p}{p}\qquad & \ruleJOINA\;\sigmairr{\g}{\neg p}\;\sigmairr{\g}{\neg \neg p\imp p} \quad& 1
     \\
\sigmareg{\g}{\neg p}\qquad &\tseqfrj{p,\:\neg\neg p}{\neg p} &
 \ruleIMPi\; \sigmareg{\b}{\bot} & 0
\\
 \sigmareg{\g}{\neg\neg p \imp p}\qquad &
    \tseqfrj{H,\:\neg \neg p}{\neg\neg p\imp p} \qquad& \ruleIMPi\;  \sigmareg{\g}{p} & 1
   \end{array}
\]
Finally, let us consider the root $\rho$ having height 2. We have:
\[
\Lambdas_\rho=\Lambda_\rho=\{H\}  
    \qquad
    \Omega_\rho =\Sfr{S}
\]
We can inherit the following definitions from the worlds $\a$, $\b$, $\g$:
\[
\begin{array}{l}
    \sigmairr{\rho}{\bot} \;=\;\sigmairr{\a}{\bot}  
    \qquad
    \sigmairr{\rho}{p} \;=\;\sigmairr{\g}{p}
    \qquad
    \sigmairr{\rho}{\neg p} \,=\,\sigmairr{\g}{\neg p}      
  \\[.5ex]
\sigmareg{\rho}{\bot} \;=\;\sigmareg{\b}{\bot}
      \qquad
    \sigmareg{\rho}{p} \,=\,\sigmareg{\a}{p}  
  \qquad
    \sigmareg{\rho}{\neg p} \;=\;\sigmareg{\b}{\neg p}
\\[.5ex]
    \sigmareg{\rho}{\neg \neg p} \;=\;\sigmareg{\a}{\neg \neg p}  
    \qquad
    \sigmareg{\rho}{\neg \neg p \imp p} \,=\,\sigmareg{\g}{\neg \neg p\imp p}
    \end{array}
\]
For $C\in\{\,\neg\neg p,\,\neg\neg p\imp  p,\, \neg\neg p\lor \neg p,\, S \,\}$,
the sequents $\sigmairr{\rho}{C}$ are:
\[
   \begin{array}{lrllll}
\sigmairr{\rho}{\neg\neg p}  &
    \tseqfrji{}{H}{\neg\neg p}  & \ruleIMPni\; \sigmareg{\a}{\bot} & 0
    \\
\sigmairr{\rho}{\neg\neg p\imp  p}  &
    \tseqfrji{}{H}{\neg\neg p \imp p}  & \ruleIMPni\; \sigmareg{\g}{p} & 1
     \\
\sigmairr{\rho}{\neg\neg p\lor \neg p} \qquad &
  \tseqfrji{}{H}{\neg\neg p \lor\neg p}  \qquad &
     \lor\;\sigmairr{\rho}{\neg\neg p}\;\sigmairr{\g}{\neg p} \quad & 0
    \\
\sigmairr{\rho}{S} &
    \tseqfrji{H}{}{S}  & \ruleIMPi\; \sigmairr{\rho}{\neg\neg p\lor \neg p} & 0
   \end{array}
\]  
For $C\in\{\,\neg\neg p\lor \neg p,\, S \,\}$,
the sequents $\sigmareg{\rho}{C}$ are:
\[
   \begin{array}{lrlll}
\sigmareg{\rho}{\neg\neg p\lor \neg p}\qquad  &
\tseqfrj{H}{\neg\neg p \lor \neg p}\qquad  &
 \ruleJOINO \;\sigmairr{\rho}{\neg\neg p\imp p} \;\sigmairr{\rho}{\neg\neg p}\;\sigmairr{\g}{\neg p}
\quad & 2 
   \\
\sigmareg{\rho}{S}  &
    \tseqfrj{H}{S}  & \ruleIMPi\;\sigmareg{\rho}{\neg\neg p\lor \neg p} & 2
   \end{array}
\]
The $\FRJof{S}$-derivation $\derfrjr{\rho}{S}$ of $\sigmareg{\rho}{S}$
obtained in the end is an $\FRJof{S}$-derivation of the goal $S$.
Note that $\derfrjr{\rho}{S}$ essentially coincides with the
derivation in Fig.~\ref{fig:frjST}, hence the model
$\Mod{\derfrjr{\rho}{S}}$ extracted from $\derfrjr{\rho}{S}$ is
isomorphic to the countermodel $\K_S$ displayed at the beginning of
this example.
\EndEs
\end{example}


In the previous example, the model $\K_S$ initially chosen is a
\emph{minimal} countermodel for $S$,  since there exists no
countermodel for $S$ having less than 4 worlds, and the obtained model
$\Mod{\derfrjr{\rho}{S}}$ is a minimal countermodel for $S$ as well.
One may wonder if this is always the case.
The answer is negative, as shown in the next example;
we also point out that  Lemma~\ref{lemma:minMod} only
sets a bound on the height of $\Mod{ \derfrjr{\rho}{G} }$
and not on the number of worlds.

\begin{example}\label{ex:DUMnonMIN}
  Let $C$ be the formula
  \[
  \begin{array}{l}
    C \;= \; A\lor B
    \qquad
    A\;=\; (p_1\imp p_2)\lor (p_2\imp p_1)
    \qquad
    B\;=\; (q_1\imp q_2)\lor (q_2\imp q_1)
  \end{array}
  \]
  We have $\height{C}=1$.  Let $\K_C$ be the following countermodel
  for $C$ of height 1, consisting of the worlds $\a$ and $\b$ of
  height 0 and of the root $\rho$ of height 1:
  
  \begin{center}
    \begin{tikzpicture}
      \path[scale=.42,grow'=up,every node/.style={fill=gray!30,rounded corners},
      level 1/.style = {sibling distance = 60mm}, 
      edge from parent/.style={black,draw}]
      node{$\rho$:  }
      [level distance=20mm] 
      child{
        node{$\a$: $p_1,\,q_1$}
      }
      child{
        node{$\b$: $p_2,\,q_2$  }
      }
      ;
    \end{tikzpicture}
  \end{center}
  
  \noindent
  One can easily check that there is no countermodel for $C$ having
  less than 3 worlds, hence $\K_C$ is a minimal countermodel for $C$.
  We have:
  \[
  \begin{array}{l}
    \Sfl{C} \;=\; 
    \{\:   p_1,\:p_2,\:q_1,\:q_2   \:\} 
    \\[1ex]   
    \Sfr{C} \:=\:  \{\: 
    C,\:A,\:B,\: p_1\imp p_2,\;p_2\imp p_1,\:q_1\imp q_2,\;q_2\imp q_1,\; p_1,\:p_2,\:q_1,\:q_2
    \:\}   
    \\[1ex]
    \Lambdas_\a\;=\;\Lambda_\a\;=\;\{\:p_1,\:q_1\:\}   
    \qquad
    \Omega_\a \;=\;\{\:\:p_2,\: q_2,\: p_1\imp p_2,\:q_1\imp q_2 \:\}
    \\[1ex]
    \Lambdas_\b\;=\;\Lambda_\b\;=\;\{\:p_2,\:q_2\:\}   
    \qquad
    \Omega_\b \;=\;\{\:   \:p_1,\: q_1,\;p_2\imp p_1,\:q_2\imp q_1\:\}
    \\[1ex]
    \Lambdas_\rho\;=\;\Lambda_\rho\;=\;\emptyset
    \qquad
    \Omega_\rho \;=\;\Sfr{C}
  \end{array}
  \]
  We define the sequents $\sigmairr{\d}{C}$ and $\sigmareg{\d}{C}$,
  where $\d\in\{\a,\b,\rho\}$ and $C\in\Omega_\d$,
  only showing the sequents needed to prove the goal.

  \[
  \begin{array}{lrlll}
    \sigmareg{\a}{p_2}  &  
    \tseqfrj{p_1,\:q_1,\:q_2}{p_2}  & \ruleAXR
    \\
    \sigmareg{\a}{q_2}  &  
    \tseqfrj{p_1,\:p_2,\:q_1}{q_2}  & \ruleAXR
    \\
    \hline
    \sigmareg{\b}{p_1}  &  
    \tseqfrj{p_2,\:q_1,\:q_2}{p_1}  & \ruleAXR
    \\
    \sigmareg{\b}{q_1}  &  
    \tseqfrj{p_1,\:p_2,\:q_2}{q_1}  & \ruleAXR
    \\
    \hline
    \sigmairr{\rho}{p_1 \imp p_2}\qquad  &  
    \tseqfrji{}{q_1,\:q_2}{p_1\imp p_2} \qquad &  \ruleIMPni\; \sigmareg{\a}{p_2}
    \\
    \sigmairr{\rho}{p_2 \imp p_1}  &  
    \tseqfrji{}{q_1,\:q_2}{p_2\imp p_1}  &  \ruleIMPni\; \sigmareg{\b}{p_1}
    \\
    \sigmairr{\rho}{q_1 \imp q_2}  &  
    \tseqfrji{}{p_1,\:p_2}{q_1\imp q_2}  &  \ruleIMPni\; \sigmareg{\a}{q_2}
    \\
    \sigmairr{\rho}{q_2 \imp q_1}  &  
    \tseqfrji{}{p_1,\:p_2}{q_2\imp q_1}  &  \ruleIMPni\; \sigmareg{\b}{q_1}
    \\
    \sigmairr{\rho}{A}  &  
    \tseqfrji{}{q_1,\:q_2}{A}  &  \lor\; \sigmairr{\rho}{p_1\imp p_2} \; \sigmairr{\rho}{p_2\imp p_1} 
    \\
    \sigmairr{\rho}{B}    &  
    \tseqfrji{}{p_1,\:p_2}{B}  &  \lor\; \sigmairr{\rho}{q_1\imp q_2} \; \sigmairr{\rho}{q_2\imp q_1} 
    \\
    \hline
    \sigmareg{\rho}{C} &  
    \tseqfrj{}{A\lor B}  & \ruleJOINO\;  \sigmairr{\rho}{A}\; \sigmairr{\rho}{B}
  \end{array}
  \]  
  The goal $C$ is proved by the $\FRJof{C}$-derivation
  $\derfrjr{\rho}{C}$ of $\sigmareg{\rho}{C}$.  The model $\Mod{
    \derfrjr{\rho}{C} }$ extracted from $\derfrjr{\rho}{C}$ is:
  
  \begin{center}
 \begin{tikzpicture}
      \path[scale=.42,grow'=up,every node/.style={fill=gray!30,rounded corners},
      level 1/.style = {sibling distance = 80mm}, 
      edge from parent/.style={black,draw}]
      node{$\sigmareg{\rho}{C}$:  }  
      [level distance=30mm]  
            child{
        node{$\sigmareg{\a}{p_2}$: $p_1,\,q_1,\,q_2$}
      }
         child{
        node{$\sigmareg{\a}{q_2}$: $p_1,\,p_2,\,q_1$}
      }
      child{
        node{$\sigmareg{\b}{p_1}$: $p_2,\,q_1,\,q_2$}
      }
         child{
        node{$\sigmareg{\b}{q_1}$: $p_1,\,p_2,\,q_2$}
      }
      ;
    \end{tikzpicture}
 \end{center}

 \noindent
 Such a model has the minimal height 1, as expected, but it is not
 minimal.  To obtain minimal models, we should redesign $\FRJof{G}$ in
 a multi-succedent style, so that axioms of the kind
 $\seqfrj{p_1,q_1}{p_2,q_2}$ and $\seqfrj{p_2,q_2}{p_1,q_1}$ are
 allowed.  Thus, we should have regular axioms of the kind
 $\seqfrj{\Gat}{\Dat}$, where $\Gat$ and $\Dat$ are disjoint sets of
 propositional variables such that $\Gat\subseteq\Lhs{G}$ and
 $\Dat\subseteq\Rhs{G}$.  However, this would cause an exponential
 blow-up of the number of provable sequents and proof-search would
 become unfeasible. E.g., in this case we would get 64 regular
 axioms instead of 5.
 \EndEs
\end{example}

We remark that we can modify the proof-search procedure of
Fig.~\ref{fun:FSearch} so to generate $\FRJof{G}$-derivations of
minimal height; this is obtained by delaying the application of join
rules as much as possible, mimicking the strategy applied in the above
examples.

The rest of this section is devoted to  the detailed proof of  Lemma~\ref{lemma:minMod}.
Firstly, we prove some closure
properties of sets  $\Lambda_\a$ and $\Lambdas_\a$:

\begin{lemma}\label{lemma:closure}
  Let $\K$ be a countermodel for $G$ and $\a$  a world of $\K$. Then,
  $\Lambda_\a=\Clo{\Lambda_\a}=\Clo{\Lambdas_\a}$.
\end{lemma}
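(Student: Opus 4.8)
The plan is to reduce the three claimed identities to the single non-trivial inclusion $\Lambda_\a\subseteq\Clo{\Lambdas_\a}$, extracting everything else from the formal properties \ref{propClo:1}--\ref{propClo:6} of the closure operator. First I record the cheap facts: since $\a\forcings A$ entails $\a\forcing A$, we have $\Lambdas_\a\subseteq\Lambda_\a$, whence $\Clo{\Lambdas_\a}\subseteq\Clo{\Lambda_\a}$ by \ref{propClo:4}, and $\Lambda_\a\subseteq\Clo{\Lambda_\a}$ by \ref{propClo:3}. Once the key inclusion $\Lambda_\a\subseteq\Clo{\Lambdas_\a}$ is in hand, property \ref{propClo:6} upgrades it to $\Clo{\Lambda_\a}\subseteq\Clo{\Lambdas_\a}$, so that $\Clo{\Lambda_\a}=\Clo{\Lambdas_\a}$. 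For the remaining identity $\Lambda_\a=\Clo{\Lambda_\a}$ I invoke \ref{propClo:1}: from $\a\forcing\Lambda_\a$ it follows that every $D\in\Clo{\Lambda_\a}$ is forced at $\a$, so every such $D$ that lies in $\Sfl{G}$ belongs to $\Lambda_\a$; together with $\Lambda_\a\subseteq\Clo{\Lambda_\a}$ this fixes $\Lambda_\a$ as the forced part of $\Clo{\Lambda_\a}$ inside $\Sfl{G}$, which is the only regime in which the closure is exploited by the calculus.

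The heart of the argument is the inclusion $\Lambda_\a\subseteq\Clo{\Lambdas_\a}$, which I would prove by structural induction on a formula $A\in\Lambda_\a$ (so $A\in\Sfl{G}$ and $\a\forcing A$). If $A$ is a propositional variable, then $\a\forcing A$ already means $\a\forcings A$, hence $A\in\Lambdas_\a\subseteq\Clo{\Lambdas_\a}$. If $A=B\land C$, the generation rules for $\Sfl{G}$ give $B,C\in\Sfl{G}$ and the forcing clause gives $\a\forcing B$ and $\a\forcing C$; by the induction hypothesis $B,C\in\Clo{\Lambdas_\a}$, and the grammar production $X\land X$ yields $B\land C\in\Clo{\Lambdas_\a}$. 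If $A=B\lor C$, again $B,C\in\Sfl{G}$ and $\a\forcing B\lor C$ gives $\a\forcing B$ or $\a\forcing C$; in the first case (the second being symmetric) $B\in\Clo{\Lambdas_\a}$ by the induction hypothesis, and the production $X\lor A'$ with $A'=C$ an arbitrary formula yields $B\lor C\in\Clo{\Lambdas_\a}$. Here the flexibility of the grammar, which lets the unused disjunct be any formula, is precisely what allows the closure of the smaller, strongly-forced set to already reach $B\lor C$.

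The implication case is the delicate one and is where I expect the main obstacle. For $A=B\imp C\in\Sfl{G}$ the generation rules give $C\in\Sfl{G}$ (while $B\in\Sfr{G}$), so the induction hypothesis is available only for the consequent. I split on whether the antecedent is forced. If $\a\nforcing B$, then by the very definition of $\forcings$ on implications we have $\a\forcings(B\imp C)$, so $B\imp C\in\Lambdas_\a\subseteq\Clo{\Lambdas_\a}$. If instead $\a\forcing B$, then instantiating the forcing clause for $B\imp C$ at the world $\a$ itself gives $\a\forcing C$, hence $C\in\Lambda_\a$; the induction hypothesis yields $C\in\Clo{\Lambdas_\a}$, and the production $A'\imp X$ with $A'=B$ yields $B\imp C\in\Clo{\Lambdas_\a}$. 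This dichotomy, which forces the antecedent to be either false (making the implication strongly forced outright) or true (shifting the obligation onto the consequent, a strictly smaller left subformula), is the crux of the induction. Once it is established, the closure properties quoted above close the circle and deliver $\Lambda_\a=\Clo{\Lambda_\a}=\Clo{\Lambdas_\a}$.
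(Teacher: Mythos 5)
Your proof is correct and, for the crucial inclusion $\Lambda_\a\subseteq\Clo{\Lambdas_\a}$, it coincides with the paper's argument: the same induction on the formula, with the same case split on the antecedent in the implication case (antecedent not forced $\Rightarrow$ the implication is in $\Lambdas_\a$; antecedent forced $\Rightarrow$ pass to the consequent, which is the left subformula carrying the induction hypothesis). The one place you genuinely diverge is the direction $\Clo{\Lambda_\a}\subseteq\Lambda_\a$: the paper proves it by a second induction, this time on the size of a formula of $\Clo{\Lambda_\a}$, using~\ref{propClo:5} for the atomic case and the forcing clauses for the inductive steps, whereas you obtain it in one line from~\ref{propClo:1} and the definition of $\Lambda_\a$. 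Your route is shorter, but it only delivers $\Clo{\Lambda_\a}\cap\Sfl{G}\subseteq\Lambda_\a$, a restriction you flag explicitly. That restriction is in fact unavoidable: $\Clo{\Lambda_\a}$ contains formulas outside $\Sf{G}$ (the grammar admits $B\lor X$ and $B\imp X$ for an arbitrary formula $B$), so the stated identity $\Lambda_\a=\Clo{\Lambda_\a}$ can only be read relative to $\Sfl{G}$; the paper's own induction tacitly makes the same assumption, since its inductive steps never verify that the compound formula being placed into $\Lambda_\a$ is a left subformula of $G$. As the lemma is only ever invoked for formulas of $\Sfl{G}$ (e.g.\ antecedents $A$ of implications in $\Sfr{G}$), nothing is lost either way, and your version has the small merit of making the relativization explicit.
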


\begin{proof}
To prove the assertion, we show  that:

  \begin{enumerate} [label=(\roman*), ref=(\roman*)]
  \item\label{lemma:closure:1}
    $\Lambda_\a\,\subseteq\, \Clo{\Lambda_\a}$.

  \item\label{lemma:closure:2}
    $\Clo{\Lambda_\a}\,\subseteq\, \Lambda_\a$.

\item\label{lemma:closure:3}
    $\Clo{\Lambdas_\a}\,\subseteq\, \Clo{\Lambda_\a}$.
    
\item\label{lemma:closure:4}
    $\Lambda_\a\,\subseteq\, \Clo{\Lambdas_\a}$.

\item\label{lemma:closure:5}
    $\Clo{\Lambda_\a}\,\subseteq\, \Clo{\Lambdas_\a}$.
  \end{enumerate}
    
  \noindent
  Point~\ref{lemma:closure:1} immediately follows by~\ref{propClo:3}.
  To prove~\ref{lemma:closure:2},
  let  $C\in\Clo{\Lambda_\a}$; 
by  induction on $\size{C}$, we show that 
$C\in \Lambda_\a$.
The case $C\in\Prime$ immediately follows by~\ref{propClo:5}.
Let $C=A\land B$. 
Then, $A\land B\in\Lambda_\a$ or  $\{A,B\}\subseteq \Clo{\Lambda_\a}$.
In the former case, we are done.
In the latter case, 
by the induction hypothesis we have   $\{A,B\}\subseteq\Lambda_\a$.
Thus $\a\forcing A$ and  $\a\forcing B$,
which implies  $\a\forcing A\land B$, 
hence $A\land B\in\Lambda_\a$.
The cases $C= A\lor B$ and  $C= A\imp B$ are similar.

Point~\ref{lemma:closure:3} follows by the fact that
$\Lambdas_\a\subseteq\Lambda_\a$ and by~\ref{propClo:4}.

To prove~\ref{lemma:closure:4}, let 
$C\in\Lambda_\a$, namely  $\a\forcing C$;
by induction on $\size{C}$, we show that   $C\in\Clo{\Lambdas_\a}$.  If $C\in\PV$, then $\a\forcings C$,
  hence $C\in\Lambdas_\a$, which implies $C\in\Clo{\Lambdas_\a}$.  Let
  $C=A\imp B$. If $\a\nforcing A$, then $\a\forcings C$ and, as
  above, $C\in\Clo{\Lambdas_\a}$.  If $\a\forcing A$, then
  $\a\forcing B$; by induction hypothesis, $B\in
  \Clo{\Lambdas_\a}$, hence $A\imp B\in \Clo{\Lambdas_\a}$. The cases
  $C=A\land B$ and $C=A\lor B$ easily follow by the induction
  hypothesis.
 Point~\ref{lemma:closure:5} immediately follows by~\ref{lemma:closure:4} 
and by~\ref{propClo:6}.

To sum up, by~\ref{lemma:closure:1} and~\ref{lemma:closure:2} we get
$\Lambda_\a=\Clo{\Lambda_\a}$; by~\ref{lemma:closure:3} and~\ref{lemma:closure:5},
$\Clo{\Lambda_\a}=\Clo{\Lambdas_\a}$.
\end{proof}

Now, let us come to the proof of Lemma~\ref{lemma:minMod}.
\begin{proof}[(Lemma~\ref{lemma:minMod})]
  We define the derivation $\derfrjg{\genericArrow}{\a}{C}$, where
  $\genericArrow\in\{\seqfrjArrow,\seqfrjiArrow\}$, using the
  following mutual induction hypothesis (mirroring the order  used
  to define the  sequents $\sigmairr{\a}{C}$ and
  $\sigmareg{\a}{C}$ in Ex.~\ref{ex:STMinH} and~\ref{ex:DUMnonMIN}):
  \begin{enumerate}[label=(IH\arabic*),ref=(IH\arabic*)]
  \item \label{lemma:minMod:IH1} a main induction on $\height{\a}$;
    
  \item \label{lemma:minMod:IH2} a secondary induction on
    $\tpm{\genericArrow}$, where $\tpm{\seqfrjiArrow}=0$ and
    $\tpm{\seqfrjArrow}=1$;
    
  \item \label{lemma:minMod:IH3} a third induction on $\size{C}$.
\end{enumerate}  

\noindent
Let us introduce the following notations:
\[
\begin{array}{ll}
  \bGat\;=\;\Sfl{G}\cap\PV
  \qquad
  &\bGimp\;=\;\Sfl{G}\cap \Fmimp
  \qquad
  \bG\;=\;\bGat\cup\bGimp
  \\[1ex]
  \Lambdasa_\a\;=\;\Lambdas_\a\cap\PV
  &\Lambdasi_\a\;=\;\Lambdas_\a\cap\Fmimp
\end{array}
\]
We proceed by a case analysis on $C$ and
$\derfrjg{\genericArrow}{\a}{C}$.  We point out that, since
$\a\nforcing C$, then  $C\not\in\Lambda_\a$ and $C\not\in\Lambdas_\a$.

\begin{itemize}
\item  Case $C\in\Prime$, definition of $\derfrji{\a}{C}$.
\end{itemize}

\noindent
We set:
\[
\derfrji{\a}{C}\quad=\quad
\AXC{}
\RightLabel{$\ruleAXI$}
\UIC{$\sigmairr{\a}{C}\;=\;  \seqfrji{}{\bGat\setminus\{C\},\bGimp}{C}  $}
\DP
\]
Since $\Rn{ \derfrji{\a}{C} } = -1$  and $\height{\a}\geq 0$, 
we have $\Rn{ \derfrji{\a}{C} } < \height{\a}$, hence~\ref{lemma:minMod:1} holds.
Since $C\not\in\Lambdas_\a$,
Point~\ref{lemma:minMod:2} immediately follows.

\begin{itemize}
\item  Case $C\in\Prime$, definition of  $\derfrjr{\a}{C}$.
\end{itemize}

\noindent
If $\Lambdasi_\a$ is empty, namely $\Lambdas_\a=\Lambdasa_\a$, we set:
\[
\derfrjr{\a}{C}\quad=\quad
\AXC{}
\RightLabel{$\ruleAXR$}
\UIC{$\sigmareg{\a}{C}\;=\; \seqfrj{\bGat\setminus\{C\}}{C} $}
\DP
\]
Since $\Rn{ \derfrjr{\a}{C} } = 0$, we have
$\Rn{ \derfrjr{\a}{C} } \leq \height{\a}$, which
proves~\ref{lemma:minMod:3}.  Point~\ref{lemma:minMod:4} holds for
$\b=\a$ since $C\not\in\Lambdas_\a$.  If $\Lambdasi_\a$ is nonempty
let
\[
\Upsilon\;=\;
\{\,Y~|~Y\imp Z\in \Lambdasi_\a\,\}
\;=\;
\{A_1,\dots,A_n\} \quad (n\geq 1)\ .
\]
By definition of $\Lambdasi_\a$, we have  $\a\nforcing A_j$, for every $A_j\in\Upsilon$.
By~\ref{lemma:minMod:IH2}, 
for every $1\leq j\leq n$, there is an
$\FRJof{G}$-derivation  $\derfrji{\a}{A_j}$ of 
$\sigmairr{\a}{A_j}= \seqfrji{\Sigma_j}{\Theta_j}{A_j}$
such that:

\begin{enumerate}[label=(P\arabic*),ref=(P\arabic*)]
\item \label{lemma:minMod:P1}
$\Rn{\derfrji{\a}{A_j}}\,<\, \height{\a}$;

\item \label{lemma:minMod:P2}
$\Sigma_j\,\subseteq\, \Lambdas_\a\,\subseteq \,\Sigma_j\cup\Theta_j$.

\end{enumerate}

\noindent
We stress that the use of~\ref{lemma:minMod:IH2} is sound since $\tpm{\seqfrjiArrow}<\tpm{\seqfrjArrow}$.
Let $\Sigma_j=\Sigat_j\cup\Sigimp_j$ and
$\Theta_j=\That_j\cup\Thimp_j$.
We prove that $\sigmairr{\a}{A_1},\dots,
\sigmairr{\a}{A_n}$ satisfy the following properties,
for every $1\leq j\leq n$:

\begin{enumerate}[label=(\alph*),ref=(\alph*)]
\item\label{lemma:minMod:join1}
$\Sigma_i\,\subseteq\, \Sigma_j\cup\Theta_j$, for every $i\neq j$;

\item \label{lemma:minMod:join2}
$Y\imp Z\in\Sigimp_j$ implies $Y\in\Upsilon$;

\item \label{lemma:minMod:join3}
$C\not\in\Sigat_j$. 
\end{enumerate}

\noindent
Point~\ref{lemma:minMod:join1} follows by~\ref{lemma:minMod:P2}.
Point~\ref{lemma:minMod:join2}  follows
by~\ref{lemma:minMod:P2} and the definition of $\Upsilon$.
Point~\ref{lemma:minMod:join3} follows by the fact that
$C\not\in\Lambdas_\a$ and~\ref{lemma:minMod:P2}.
By~\ref{lemma:minMod:join1}--\ref{lemma:minMod:join3}, we can
apply the rule $\ruleJOINA$ with premises
 $\sigmairr{\a}{A_1},\dots, \sigmairr{\a}{A_n}$
and build the  $\FRJof{G}$-derivation $\derfrjr{\a}{C}$ displayed below
($\Sigat$,  $\Sigimp$,  $\That$,  $\Thimp$ are defined as in Fig.~\ref{fig:FRJ}):
\[
 \begin{array}{c}
\derfrjr{\a}{C}\quad =\quad
\AXC{\dots}
\AXC{$\derfrji{\a}{A_j}$}
\noLine
   \UIC{$\seqfrji{\Sigat_j,\Sigimp_j}{\That_j,\Thimp_j}{A_j}$}
\AXC{$\dots$}
  \insertBetweenHyps{\hskip -4pt}
\RightLabel{$\ruleJOINA$}
\TIC{$\sigmareg{\a}{C}\;=\;\seqfrj{\G}{C}$}
 \DP
\quad 
 \begin{minipage}{20em}
$j=1\dots n$
\end{minipage}
\\[5ex]
\G\;=\;\Sigat\cup(\That\setminus\{C\})\cup\Sigimp\cup\Thimp
\end{array}
\]
Note that, by the definition of $\Upsilon$, the application of
$\ruleJOINA$ satisfies the restriction~\ref{PS3} stated in
Sec.~\ref{sec:FRJ}.  By definition, $\Rn{ \derfrjr{\a}{C} }= m +1$,
where $m$ is the maximum among $\Rn{\derfrji{\a}{A_1}}$, \dots,
$\Rn{\derfrji{\a}{A_n}}$.  By~\ref{lemma:minMod:P1}, $m <\height{\a}$,
hence $ \Rn{ \derfrjr{\a}{C} }\leq \height{\a}$, and this
proves~\ref{lemma:minMod:3}.  We show that
$\Lambdas_\a\subseteq\Gamma$, and this proves~\ref{lemma:minMod:4}
(where $\b=\a$).  If, for some $j\in\{1,\dots,n\}$,
$\Lambdas_\a\subseteq\Sigma_j$, then
$\Lambdas_\a\subseteq \Sigat\cup\Sigimp$, hence
$\Lambdas_\a\subseteq\Gamma$.  Otherwise, by~\ref{lemma:minMod:P2},
$\Lambdas_\a\subseteq\bigcap_{1\leq j\leq n} \Theta_j$, which implies:

\begin{enumerate}[label=(\alph*),ref=(\alph*),start=4]
\item\label{lemma:minMod:d}
  $\Lambdasa_\a\;\subseteq\;\bigcap_{1\leq j\leq n} \That_j$;

\item \label{lemma:minMod:e}
$\Lambdasi_\a\;\subseteq\;\bigcap_{1\leq j\leq n}\; \Thimp_j$.

\end{enumerate}

\noindent
Since $C\not\in\Lambdasa_\a$ and $\bigcap_{1\leq j\leq n}
\That_j=\That$, by~\ref{lemma:minMod:d} we get $\Lambdasa_\a\subseteq
\That\setminus\{C\}$, hence $\Lambdasa_\a\subseteq \Gamma$.  Moreover,
since $Y\imp Z \in\Lambdasi_\a$ implies $Y\in\Upsilon$,
by~\ref{lemma:minMod:e} we get
$\Lambdasi_\a\,\subseteq\,\Restr{(\bigcap_{1\leq j\leq n}\;
  \Thimp_j)}{\Upsilon}$, namely $\Lambdasi_\a\subseteq \Thimp$, hence
$\Lambdasi_\a\subseteq \G$.  We conclude that $\Lambdas_\a=
\Lambdasa_\a\cup\Lambdasi_\a \subseteq\Gamma$.

\begin{itemize}
\item  Case $C= C_1\lor C_2$,   definition of $\derfrji{\a}{C}$.
\end{itemize}

\noindent
Since $\a\nforcing
C_1\lor C_1$, we have $\a\nforcing C_1$ and $\a\nforcing C_2$.  
By~\ref{lemma:minMod:IH3}, for every $k\in\{1,2\}$ there 
is an $\FRJof{G}$-derivation $\derfrji{\a}{C_k}$ of 
$\sigmairr{\a}{C_k}=\seqfrji{\Sigma_k}{\Theta_k}{C_k}$ such that

\begin{enumerate}[label=(Q\arabic*),ref=(Q\arabic*)]
\item \label{lemma:minMod:Q1}
$\Rn{ \derfrji{\a}{C_k}}\,<\,\height{\a}$;

\item \label{lemma:minMod:Q2}
$\Sigma_{k}\,\subseteq\, \Lambdas_\a\,\subseteq \,\Sigma_{k}\cup\Theta_{k}$.
\end{enumerate}

\noindent
By~\ref{lemma:minMod:Q2}, we get $\Sigma_{1}\subseteq \Sigma_{2}\cup\Theta_{2}$ and
 $\Sigma_{2}\subseteq \Sigma_{1}\cup\Theta_{1}$,
hence we can set:
\[
\derfrji{\a}{C}\quad =\quad
\AXC{$\derfrji{\a}{C_1}$}
\noLine
\UIC{$\seqfrji{\Sigma_1}{\Theta_1}{C_1}$}
\AXC{$\derfrji{\a}{C_2}$}
\noLine
\UIC{$\seqfrji{\Sigma_2}{\Theta_2}{C_2}$}
\RightLabel{$\lor$}
\BIC{$\sigmairr{\a}{C}\;=\;\seqfrji
  {\underbrace{\Sigma_1,\Sigma_2}_\Sigma}{\underbrace{\Theta_1\cap\Theta_2}_\Theta}{C_1\lor C_2}$}
 \DP
\]
Since $\Rn{ \derfrji{\a}{C}}$ is the maximum between
$\Rn{ \derfrji{\a}{C_1}}$  and $\Rn{ \derfrji{\a}{C_2}}$,
by~\ref{lemma:minMod:Q1} we get   $\Rn{ \derfrji{\a}{C}}  <\height{\a}$,
and this proves~\ref{lemma:minMod:1}.
By~\ref{lemma:minMod:Q2}, we immediately get  $\Sigma\subseteq \Lambdas_\a$.
Moreover, by~\ref{lemma:minMod:Q2} it follows that
 $\Lambdas_\a\subseteq \Sigma_1$ or  $\Lambdas_\a\subseteq \Sigma_2$
 or  $\Lambdas_\a\subseteq\Theta_1\cap\Theta_2$,
 which implies  $\Lambdas_\a\subseteq\Sigma\cup\Theta$.
Thus, \ref{lemma:minMod:2} holds.

\begin{itemize}
\item  Case $C= C_1\lor C_2$,   definition of  $\derfrjr{\a}{C}$.
\end{itemize}

\noindent
Since $\a\nforcing C_1\lor C_1$,  we have $\a\nforcing C_1$ and $\a\nforcing C_2$.
By~\ref{lemma:minMod:IH2}, for every $k\in\{1,2\}$
there  is an $\FRJof{G}$-derivation $\derfrji{\a}{C_k}$ of 
$\sigmairr{\a}{C_k}=\seqfrji{\Sigma_k}{\Theta_k}{C_k}$ 
satisfying~\ref{lemma:minMod:Q1} and~\ref{lemma:minMod:Q2}.
Let $\Sigma_k= \Sigat_k\cup\Sigimp_k$ and $\Theta_k= \That_k\cup\Thimp_k$ and
let
\[
   \Upsilon\;=\;
   \{\,Y~|~Y\imp Z\in \Lambdasi_\a\,\}\,\cup\,\{\,C_1,\,C_2\,\}
   \;=\;
   \{A_1,\dots,A_n\} \quad (n\geq 1)\ .
\]
We  argue as in the case concerning $\derfrjr{\a}{C}$ with $C\in\Prime$.
For every $1\leq j\leq n$,  since $\a\nforcing A_j$,
by~\ref{lemma:minMod:IH2}
there is an
$\FRJof{G}$-derivation  $\derfrji{\a}{A_j}$ of 
$\sigmairr{\a}{A_j}= \seqfrji{\Sigma_j}{\Theta_j}{A_j}$
such that:

\begin{itemize}
\item  if  $A_j\in\{C_1, C_2\}$, points~\ref{lemma:minMod:Q1} and~\ref{lemma:minMod:Q2} hold;

\item otherwise,    points~\ref{lemma:minMod:P1} and~\ref{lemma:minMod:P2} hold.
\end{itemize}

\noindent
Hence, we  can build 
the  $\FRJof{G}$-derivation  $\derfrjr{\a}{C}$ as follows
($\Sigat$,  $\Sigimp$,  $\That$,  $\Thimp$ are defined as in Fig.~\ref{fig:FRJ}):

\[
 \begin{array}{c}
\derfrjr{\a}{C}\quad =\quad
\AXC{\dots}
\AXC{$\derfrji{\a}{A_j}$}
\noLine
\UIC{$\seqfrji{\Sigat_j,\Sigimp_j}{\That_j,\Thimp_j}{A_j}$}
\AXC{$\dots$}
  \insertBetweenHyps{\hskip -4pt}
\RightLabel{$\ruleJOINO$}
\TIC{$\sigmareg{\a}{C}\;=\;\seqfrj{\G}{C_1\lor C_2}$}
 \DP
\quad 
 \begin{minipage}{20em}
$j=1\dots n$
\end{minipage}
\\[5ex]
\G\;=\;\Sigat\cup\That\cup\Sigimp\cup\Thimp
\end{array}
\]
We point out that the displayed application of $\ruleJOINO$ matches
the restriction~\ref{PS4} stated in Sec.~\ref{sec:FRJ}.  Reasoning as
above, Point~\ref{lemma:minMod:3} follows by~\ref{lemma:minMod:P1}
and~\ref{lemma:minMod:Q1}, Point~\ref{lemma:minMod:4} (with $\b=\a$)
by~\ref{lemma:minMod:P2} and~\ref{lemma:minMod:Q2},

\begin{itemize}
\item  Case $C= C_1\land C_2$, definition of   $\derfrji{\a}{C}$.
\end{itemize}

\noindent
Since $\a\nforcing C_1\land C_2$, 
there exists $k\in\{1,2\}$ such that
$\a\nforcing C_k$.
By~\ref{lemma:minMod:IH3}, there exists an  $\FRJof{G}$-derivation $\derfrji{\a}{C_k}$ of 
$\sigmairr{\a}{C_k}=\seqfrji{\Sigma}{\Theta}{C_k}$ such that:

\begin{enumerate}[label=(R\arabic*),ref=(R\arabic*)]

\item \label{lemma:minMod:R1}
$\Rn{\derfrji{\a}{C_k}  }\,<\, \height{\a}$;

\item \label{lemma:minMod:R2}
$\Sigma\,\subseteq\, \Lambdas_\a\,\subseteq\, \Sigma\cup\Theta$.

\end{enumerate}

\noindent
We can build the  $\FRJof{G}$-derivation:
\[
\derfrji{\a}{C}\;=
\AXC{$\derfrji{\a}{C_k}$}
\noLine
\UIC{$\seqfrji{\Sigma}{\Theta}{C_k}$}
\RightLabel{$\land$}
\UIC{$\sigmairr{\a}{C}\,=\,\seqfrji{\Sigma}{\Theta}{C_1\land C_2}$}
\DP  
\]
Since $\Rn{\derfrji{\a}{C}} = \Rn{\derfrji{\a}{C_k}}$,
by~\ref{lemma:minMod:R1} we get $\Rn{\derfrji{\a}{C}} < \height{\a}$,
and this proves~\ref{lemma:minMod:1}.
Point~\ref{lemma:minMod:2} immediately follows by~\ref{lemma:minMod:R2}.

\begin{itemize}
\item  Case $C= C_1\land C_2$, definition of   $\derfrjr{\a}{C}$.
\end{itemize}

\noindent
Since $\a\nforcing C_1\land C_2$, there exists $k\in\{1,2\}$ such that
$\a\nforcing C_k$.  By~\ref{lemma:minMod:IH2}, there exists an
$\FRJof{G}$-derivation $\derfrjr{\a}{C_k}$ of
$\sigmareg{\a}{C_k}=\seqfrj{\G}{C_k}$ such that:

\begin{enumerate}[label=(R\arabic*),ref=(R\arabic*),start=3]

\item \label{lemma:minMod:R3}
$\Rn{\derfrjr{\a}{C_k}  }\,\leq\, \height{\a}$;

\item \label{lemma:minMod:R4}
There is $\b\in P$ such that $\a\leq \b$ and 
$\Lambdas_\b\,\subseteq\, \G$.

\end{enumerate}

\noindent
We can build the  $\FRJof{G}$-derivation:
\[
\derfrjr{\a}{C}\;=
\AXC{$\derfrjr{\a}{C_k}$}
\noLine
\UIC{$\seqfrj{\G}{C_k}$}
\RightLabel{$\land$}
\UIC{$\sigmareg{\a}{C}\,=\,\seqfrj{\G}{C_1\land C_2}$}
\DP  
\]
Since $\Rn{\derfrjr{\a}{C}} = \Rn{\derfrjr{\a}{C_k}}$,
by~\ref{lemma:minMod:R3} we get $\Rn{\derfrjr{\a}{C}} \leq \height{\a}$,
and this proves~\ref{lemma:minMod:3}.
Point~\ref{lemma:minMod:4} immediately follows by~\ref{lemma:minMod:R4}.

\begin{itemize}
\item  Case $C= A \imp B$,  definition of  $\derfrji{\a}{C}$. 
\end{itemize}

\noindent
Since $\a\nforcing A\imp B$,
there is $\eta\in P$ such that
$\a\leq \eta$ and
$\eta\forcing A$ and $\eta\nforcing B$.
Without loss of generality, we assume that, for every $\d\in P$ such
that $\a\leq \d < \eta$, we have $\d\nforcing A$.
Since  $\a\leq \eta$, it holds that  $\a\nforcing B$.
By~\ref{lemma:minMod:IH3},
there exists an $\FRJof{G}$-derivation
$\derfrji{\a}{B}$ of $\sigmairr{\a}{B}=\seqfrji{\Sigma_1}{\Theta_1}{B}$
such that:

\begin{enumerate}[label=(S\arabic*),ref=(S\arabic*)]

\item \label{lemma:minMod:S1}
$\Rn{\derfrji{\a}{B}  }\,<\, \height{\a}$;

\item \label{lemma:minMod:S2}
$\Sigma_1\,\subseteq\, \Lambdas_\a\,\subseteq\, \Sigma_1\cup\Theta_1$.

\end{enumerate}

\noindent
If $\eta=\a$, then $\a\forcing A$, hence $A\in\Lambda_\a$, which
implies, by Lemma~\ref{lemma:closure}, $A\in\Clo{\Lambdas_\a}$.  Let
$\Lambda$ be a (possibly empty) minimum subset of $\Lambdas_\a\setminus\Sigma_1$ such
that $A\in\Clo{\Sigma_1\cup\Lambda}$ (namely:
$\Lambda'\subsetneq\Lambda$ implies $A\not\in\Clo{\Sigma_1\cup
  \Lambda'}$); note that $\Lambda\subseteq \Theta_1$.
We can build
the  $\FRJof{G}$-derivation  $\derfrji{\a}{C}$ as follows,
where  rule $\ruleIMPi$ shifts
the set $\Lambda$   to the left of semicolon:
\[
  \begin{array}{c}
  \derfrji{\a}{C}\;=
\AXC{$ \derfrji{\a}{B} $}
\noLine
\UIC{$\seqfrji{\Sigma_1\;}
  {\;\overbrace{\Theta_2,\Lambda}^{\Theta_1}}{B}$}
    \RightLabel{$\ruleIMPi$}
    \UIC{$\sigmairr{\a}{C}\;=\;
      \seqfrji{\underbrace{\Sigma_1,\Lambda}_\Sigma\;}{\;\Theta_2}{A\imp B}$}
   \DP
    \quad
    \begin{minipage}{15em}
      $\Theta_2=\Theta_1\setminus\Lambda$
      \\[.5ex]
      $A\in\Clo{\Sigma_1\cup\Lambda}$
\\[.5ex]
$\Lambda'\subsetneq\Lambda$ implies
$A\not\in\Clo{\Sigma_1\cup \Lambda'}$
    \end{minipage}
 \end{array}
\]
We point out that the choice of $\Lambda$ complies with the
restriction~\ref{PS1} stated in Sec.~\ref{sec:FRJ}.  Since
$\Sigma_1\subseteq \Lambdas_\a$ (see~\ref{lemma:minMod:S2}) and
$\Lambda\subseteq\Lambdas_\a$, we get
$\Sigma_1\cup\Lambda\subseteq \Lambdas_\a$, namely
$\Sigma\subseteq\Lambdas_\a$.  Moreover, since
$\Lambdas_\a\subseteq\Sigma_1\cup\Theta_1$ (see~\ref{lemma:minMod:S2})
and $\Sigma_1\cup\Theta_1=\Sigma\cup\Theta_2$, we get
$\Lambdas_\a\subseteq\Sigma\cup\Theta_2$, and this concludes the proof
of~\ref{lemma:minMod:2}.  Since
$\Rn{\derfrji{\a}{C}}= \Rn{\derfrji{\a}{B}}$, by~\ref{lemma:minMod:S1}
we get $\Rn{\derfrji{\a}{C}} < \height{\a}$, which
proves~\ref{lemma:minMod:1}.

If $\a < \eta$, then $\height{\eta} < \height{\a}$.  By the choice of
$\eta$, we have $\a\nforcing A$.  Since $\eta\nforcing B$,
by~\ref{lemma:minMod:IH1} there is an $\FRJof{G}$-derivation
$\derfrjr{\eta}{B}$ of $\sigmareg{\eta}{B}=\seqfrj{\Gamma}{B}$ such that:

\begin{enumerate}[label=(S\arabic*),ref=(S\arabic*),start=3]

\item \label{lemma:minMod:S3}
$\Rn{\derfrjr{\eta}{B}  } \,\leq\, \height{\eta}$;

\item \label{lemma:minMod:S4}
  There exists $\b\in P$ such that 
$\eta\leq  \b$  and $\Lambdas_\b\,\subseteq\,\G$.
\end{enumerate}

\noindent
Since $\eta \forcing A$ and $\eta\leq \b$, we get $\b \forcing A$,
hence $A\in\Lambda_\b$.  By Lemma~\ref{lemma:closure},
$A\in\Clo{\Lambdas_\b}$ hence, by~\ref{lemma:minMod:S4}
and~\ref{propClo:4}, $A\in\Clo{\Gamma}$.  Since $\a\nforcing A$, we
have $A\not\in\Lambda_\a$ hence, by Lemma~\ref{lemma:closure},
$A\not\in\Clo{\Lambdas_\a}$.  Since $\a <\eta\leq \b$, we have
$\Lambdas_\a\subseteq \Lambda_\b$.  By Lemma~\ref{lemma:closure}, we
get $\Lambda_\b=\Clo{\Lambdas_\b}$, thus
$\Lambdas_\a\subseteq \Clo{\Lambdas_\b}$.  By~\ref{lemma:minMod:S4}
and~\ref{propClo:4}, $\Clo{\Lambdas_\b}\subseteq\Clo{\Gamma}$, hence
$\Lambdas_\a\subseteq\Clo{\Gamma}$.  To sum up:
\begin{itemize}
\item  $\Lambdas_\a\;\subseteq\;\Clo{\Gamma}\cap\bG$
and $A\in\, \Clo{\Gamma}\setminus \Clo{\Lambdas_\a}$.
\end{itemize}

\noindent
Let $\Theta$ be  a maximum extension 
of $\Lambdas_\a$ such that $\Lambdas_\a\subseteq\Theta \subseteq\Clo{\Gamma}\cap\bG$
and $A\not\in\Clo{\Theta}$ (namely: $\Theta\subsetneq \Theta'\subseteq\Clo{\Gamma}\cap\bG$
implies $A\in \Clo{\Theta'})$.
We can build the $\FRJof{G}$-derivation:
\[
\derfrji{\a}{C}\;=\;
\AXC{$\derfrjr{\eta}{B}$}
\noLine
\UIC{$\seqfrj{\G}{B}$}
\RightLabel{$\ruleIMPni$}
\UIC{$\sigmairr{\a}{C}\;=\;\seqfrji{}{\Theta}{A\imp B}$}
\DP  
\quad 
\begin{minipage}{20em}
  $\Lambdas_\a\  \;\subseteq\;   \Theta\;\subseteq\; \Clo{\Gamma} \,\cap\,\bG$
\\[.5ex]
$A\,\in\, \Clo{\G}\setminus\Clo{\Theta}$   
\\[.5ex]
 $\Theta\subsetneq \Theta'\subseteq\Clo{\Gamma}\cap\bG$
implies $A\in \Clo{\Theta'}$
\end{minipage}
\]  
We point out that the choice of $\Theta$ matches the
restriction~\ref{PS2} stated in Sec.~\ref{sec:FRJ}.  We have
$\Rn{ \derfrji{\a}{C}} = \Rn{ \derfrjr{\eta}{B}}$;
by~\ref{lemma:minMod:S3} we get
$\Rn{\derfrji{\a}{C} } \leq \height{\eta}$, hence
$\Rn{\derfrji{\a}{C} } < \height{\a}$, which
proves~\ref{lemma:minMod:1}.  Since $\Lambdas_\a\subseteq\Theta$,
\ref{lemma:minMod:2} holds.

\begin{itemize}
\item  Case $C= A \imp B$,  definition of  $\derfrjr{\a}{C}$.
\end{itemize}

\noindent
Since $\a\nforcing A\imp B$, there is $\eta\in P$ such that $\a\leq
\eta$ and $\eta\forcing A$ and $\eta\nforcing B$.  Since
$\eta\nforcing B$, by induction hypothesis~\ref{lemma:minMod:IH1} if
$\a < \eta$ and~\ref{lemma:minMod:IH3} if
$\a=\eta$, there is an $\FRJof{G}$-derivation $\derfrjr{\eta}{B}$ of
$\sigmareg{\eta}{B}=\seqfrj{\Gamma}{B}$ satisfying
points~\ref{lemma:minMod:S3} and~\ref{lemma:minMod:S4}; note that
$\a\leq\eta\leq \b$.  Since $\eta\leq\b$, we have $\b\forcing A$,
namely $A\in\Lambda_\b$.  By Lemma~\ref{lemma:closure},
$A\in\Clo{\Lambdas_\b}$, which implies, by~\ref{lemma:minMod:S4}
and~\ref{propClo:4}, $A\in\Clo{\G}$.  Thus, we can build the
$\FRJof{G}$-derivation:
\[
\derfrjr{\a}{C}\;=\;
\AXC{$\derfrjr{\eta}{B}$}
\noLine
\UIC{$\seqfrj{\G}{B}$}
\RightLabel{$\ruleIMPi$}
\UIC{$\sigmareg{\a}{C}\;=\;\seqfrj{\G}{A\imp B}$}
\DP  
\qquad 
\begin{minipage}{14em}
$A\,\in\, \Clo{\G}$   
\end{minipage}
\]  
Since $\Rn{\derfrjr{\a}{C}}=\Rn{\derfrjr{\eta}{B}}$
and $\height{\eta}\leq \height{\a}$ (indeed, $\a\leq \eta$),
by~\ref{lemma:minMod:S3} we get  $\Rn{\derfrjr{\a}{C}}\leq\height{\a}$,
which proves~\ref{lemma:minMod:3}.
Point~\ref{lemma:minMod:4} immediately follows by~\ref{lemma:minMod:S4},
being $\a\leq\b$.
\end{proof}






\section{Related and future work}\label{sec:rel}
\label{sec:related}

\begin{figure}[t]
  \centering
\begin{tikzpicture}
 \path[scale=.60,grow'=up,every node/.style={fill=gray!30,rounded corners},
level 1/.style = {sibling distance = 40mm},
level 2/.style = {sibling distance = 20mm},
 level 3/.style = {sibling distance = 10mm},
 edge from parent/.style={black,draw}]

node{1: }
child{
node{2: }
child{
node{5: p}
}
}
child{
node{3: p}
}
child{
node{4: }
}
;
\end{tikzpicture}
\hspace{4em}
\begin{tikzpicture}
\path[scale=.60,grow'=up,every node/.style={fill=gray!30,rounded corners},
level 1/.style = {sibling distance = 40mm},
level 2/.style = {sibling distance = 20mm},
 level 3/.style = {sibling distance = 10mm},
 edge from parent/.style={black,draw}]

node{1: }
child{
node{2: }
child{
node{5: }
}
child{
node{6: p}
}
}
child{
node{3: }
child{
node{7: p}
}
}
child{
node{4: }
}
;
\end{tikzpicture}
  \caption{The countermodels for  $S$  and $T$ 
    (see Ex.~\ref{ex:frjNishimura})
    built by the prover $\protect\lsj$~\protect\cite{FerFioFio:2013}.}
  \label{fig:lsj}
\end{figure}
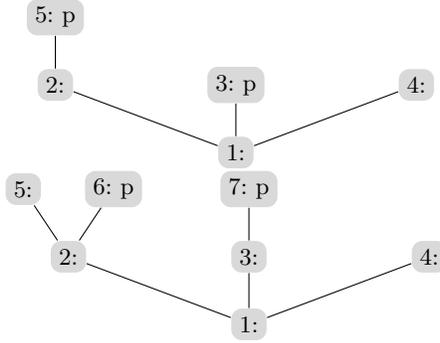

We have introduced a forward calculus $\FRJof{G}$  to  derive the
non-validity of a goal formula $G$ in $\IPL$. If
$G$ is provable in $\FRJof{G}$, from the derivation we can extract a countermodel for $G$.
Otherwise, we eventually get a saturated database  $\DBof{G}$,
which can be  exploited  to build a derivation of $G$ in the
sequent calculus $\GJ$;
accordingly, $\DBof{G}$  can be understood as a
``proof-certificate'' of the validity of $G$ in $\IPL$
(a dual remark for a forward calculus for $\IPL$ has been issued in~\cite{McLPfe:2008}).


To evaluate the potential of our approach, we have developed $\frj$, a Java
implementation of our proof-search procedure based on the
full-fledged  framework JTabWb~\cite{FerFioFio:2017}.  So far we have implemented the plain
forward strategy and the redundancy checks based on forward and
backward subsumption.  At each iteration of the main loop, $\frj$
applies all the possible instances of rules $\land$, $\lor$,
$\ruleIMPi$ and $\ruleIMPni$ involving at least a premise proved in
the last step.  To manage join rules $\ruleJOINA$ and $\ruleJOINO$, \frj maintains a list of
\emph{join-compatible}  sets $\Jcal_k$, namely $\Jcal_k$
is a set of irregular sequents matching 
the  side conditions~\ref{sc:J1} and~\ref{sc:J2}
(see Sec.~\ref{sec:FRJ}).  At each iteration the list is updated resting on the
set $\Ical$ of irregular sequents proved in the last iteration.  In
particular, each join-compatible set $\Jcal_k$ is possibly extended with
elements of $\Ical$ and the new join-compatible sets issued from~$\Ical$
are added.  For every join-compatible set $\Jcal_k$, every possible join
rule having premises $\Jcal_k$ is applied.  We also exploit backward
subsumption to optimize the implementation of join rules: whenever
backward subsumption is detected, every subsumed irregular sequent
occurring in a join-compatible set is replaced by the subsuming one.
Finally \frj, executed with the \texttt{-gbu} option, extracts from
the saturated database generated by a failed proof-search in
$\FRJof{G}$, the $\GBUof{G}$-derivation of $G$.  When executed with
the \texttt{-latex} option, the prover yields the \LaTeX-rendering of
the generated derivations and countermodels.
Most of the examples presented in the papers have been
obtained by running $\frj$.

As discussed in Sec.~\ref{sec:FRJ}, whenever we search for an
 $\FRJof{G}$-derivation of $G$, we are also trying to build a
 countermodel for $G$ in a backward style, starting from the final
 worlds down to the root.
Thus, our countermodel construction technique
is dual    to  standard proof-search procedures such
as~\cite{AvelloneFM:15,FerFioFio:2010lpar,FerFioFio:2013,FerFioFio:2015tocl,GorPos:2010,Negri:14,PinDyc:95},
where proofs and model are searched bottom-up,
starting from the goal and backward applying the rules of the calculus.
One of the advantages of forward
vs.~backward reasoning is that, provided one implements suitable
redundancy checks, derivations are more concise since sequents are
reused and not duplicated. As a consequence, the obtained models are
in general compact and do not contain redundant worlds.  For instance,
the models in Figs.~\ref{fig:countST} and~\ref{fig:countAST} are the
minimal countermodels for the formulas $S$ and $T$ respectively.  The
model in Fig.~\ref{fig:countAST} is particularly significant since it
is not a tree, hence it cannot be obtained by the mentioned standard
proof-search procedures, which only generate tree-shaped models.
For instance,
let us consider the prover
 $\lsj$, implementing   in JTabWb  
a backward proof-search procedure  for
the calculi presented in~\cite{FerFioFio:2013}.
 For unprovable formulas, $\lsj$  yields countermodels of
 minimal height; however, the obtained  models are always trees,
 hence they might contain redundant worlds.
In  Fig.~\ref{fig:lsj} we show  the countermodels 
built by  $\lsj$   for the formulas $S$ and $T$;
 in the left-hand side model, the worlds 3 and 5 can be overlapped;
  in the  right-hand side model,  world 6 can be merged with   7  
 and 4 with 5.
As another significant example, let us consider the one-variable formulas $N_i$
of the Nishimura family~\cite{ChaZak:97}, which are not valid in
$\IPL$:
\[
\begin{array}{lll}
  N_1\: = \:p   &\qquad&N_{2n+3}\:=\: N_{2n+1}\,\lor\, N_{2n+2}
  \\[1ex]
  N_2\:=\:\neg p&&N_{2n+4}\:= \:N_{2n+3}\,\imp\, N_{2n+1}
\qquad n\geq 0
\end{array}
\]
\begin{figure}[t]
  \centering
  \includegraphics[scale=.42,clip=false]{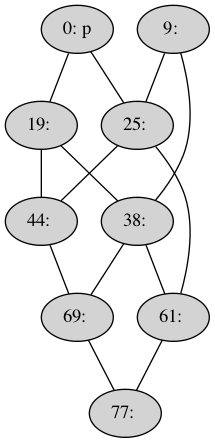}
  \caption{Countermodel for $N_{17}$}
  \label{fig:N17}
\end{figure}

\noindent
We recall that the aforementioned formulas  $S$ and $T$  are equivalent 
to  $N_{10}$ and $N_9$ respectively.
For   formulas $N_j$,  \frj generates the standard ``tower-like'' minimum
countermodel~\cite{ChaZak:97}.
In  Fig.~\ref{fig:N17} we display 
the countermodel  obtained for $N_{17}$;
in contrast, 
$\lsj$ fails to build a countermodel
for such a  formula.

A method to test the validity of intuitionistic formulas,
based on the  construction of small countermodels,
is presented in~\cite{GorTho:2012,Thomson:2014}.
Let  $G$ be the goal formula and 
let us call \emph{atoms} the subformulas
of $G$ which are  propositional variables or $\imp$-formulas
(note that these are the same kind of formulas handled by $\FRJof{G}$).
The construction of a countermodel  for $G$ is based on a fixpoint procedure.
The initial configuration is  the structure  $\K_0=\stru{P_0,\leq,w_0,V}$,
where $P_0$ is the power set of the set of atoms, 
$\leq$ is the subset relation,
$w_0$ is the empty set
(hence, $\stru{P_0,\leq,w_0}$ is a poset with minimum element
$w_0$)
and $V$ is a function mapping  $w\in P_0$ to the set  $w\cap\PV$.
By iterating a refinement procedure, consisting in  removing
elements from $P_0$, we eventually get a Kripke model
$\K_n=\stru{P_n,\leq,w_n,V}$ satisfying the following properties:

\begin{itemize}
\item for every $w\in P_n$ and every  atom $A$,
    $\K_n,w\forcing A$ iff $A\in w$;

\item for every $w\in P_n$ and every subformula $F$ of  $G$,
    $\K_n,w\forcing F$ iff $F\in\Clo{w}$;
  
\item
$G$ is not valid in $\IPL$ iff
$\K_n$ is a countermodel for $G$ (namely,   $\K_n,w_n\nforcing G$).
\end{itemize}
Thus, to decide the validity of $G$, one has to run
the process until the fixpoint $\K_n$ is reached, and then check whether
$\K_n$ is a countermodel for $G$ (e.g., by testing whether $G\in\Clo{w_n}$).
The procedure has been implemented by the system $\bddint$,
using  BDD (Binary Decision Diagram) to efficiently
represent the posets and the  operations on them.
By construction, the generated
countermodels    do not contain redundancies
(indeed, distinct worlds of $\K_n$
are separated by at least one atom).
However, the properties of the obtained  countermodels are not fully
investigated in the mentioned papers and  
$\bddint$ does not explicitly output them;
we guess that they are  close to the ones obtained with $\frj$.
We also point out that~\cite{Thomson:2014}
presents a procedure (not implemented in $\bddint$)
to get a  derivation of $G$
in a standard sequent calculus in case
the final model $\K_n$  is not a countermodel for $G$.


Finally, as a future work we plan to investigate the applicability of
our method to other logics, in particular to modal logics such as
$\mathbf{S4}$ and intermediate logics such as G\"odel-Dummett logic
characterized by linear Kripke models.


\newpage
\appendix


\section{Soundness of $\FRJof{G}$}
\label{sec:soundFRJ}

We prove that, given an $\FRJof{G}$-derivation $\Dcal$ of $G$,
$\Mod{\Dcal}$ is a countermodel for $G$.  As discussed in
Sec.~\ref{sec:FRJ}, we have to prove the soundness
property~\ref{prop:soundr}. We introduce the following relation
$\mapstois$ between irregular sequents:

\begin{itemize}
\item $\s_1\mapstois\s_2$ iff $\s_1\mapstos\s_2$ and every $\s'$ such
  that $\s_1\mapstos\s'\mapstos\s_2$ is irregular.
\end{itemize}

It is easy to check that:
\begin{enumerate}[label=(Ir\arabic*),ref=(Ir\arabic*)]
\item\label{lemma:lhs:4}
  $\seqfrji{\Sigma_1}{\Theta_1}{C_1}\,\mapstois\,
  \seqfrji{\Sigma_2}{\Theta_2}{C_2}$ implies
  $\Sigma_1\subseteq\Sigma_2$ and
  $\Sigma_2\cup\Theta_2\subseteq\Sigma_1\cup\Theta_1$.  
\end{enumerate}

The key lemma is (we recall that $\Sfm{C}$ denotes the set
$\Sf{C}\setminus\{C\}$):

\begin{lemma}[aka Lemma 3.9 of Sec.3]\label{app:lemma:soundFRJ}
  Let $\Dcal$ be an $\FRJof{G}$-derivation of $G$, let $\Mod{\Dcal}$
  be the model extracted from $\Dcal$ and $\phi$ the map associated
  with $\Dcal$.  For every sequent $\s$ occurring in $\Dcal$:

  \begin{enumerate}[label=(\roman*),ref=(\roman*)]
  \item \label{app:lemma:soundFRJ:1}
    if $\s= \seqfrj{\G}{C}$, then $\phi(\s)\forcing\G$ and
    $\phi(\s) \nforcing C$;

  \item \label{app:lemma:soundFRJ:2}
    if $\s= \seqfrji{\Sigma}{\Theta}{C}$, let $\s_p\in\PS{\Dcal}$ such
    that~$\s\mapsto\s_p$ and $\s_p\forcing\Sigma\cap\Sfm{C}$; then
    $\s_p\nforcing C$.
  \end{enumerate}  
\end{lemma}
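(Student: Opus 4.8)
The plan is to prove \ref{app:lemma:soundFRJ:1} and \ref{app:lemma:soundFRJ:2} \emph{simultaneously}, by induction on the height $\height{\s}$ of $\s$ in $\Dcal$, with a case analysis on the last rule used to obtain $\s$: for a regular conclusion I establish \ref{app:lemma:soundFRJ:1}, for an irregular one \ref{app:lemma:soundFRJ:2}. The mutual form is essential, since the two statements feed each other exactly at the rules that change the type of a sequent: $\ruleIMPni$ turns a regular premise into an irregular conclusion, while the join rules turn irregular premises into a regular conclusion. Every appeal to the induction hypothesis will be to a premise (strictly smaller height) or, in the join case, to a p-sequent lying above $\s$ (again smaller height), so the induction is well founded.

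For the base cases, the regular axiom $\ruleAXR$ gives a p-sequent with $\phi(\s)=\s$ and $V(\s)=\Lhs{\s}\cap\PV=\bGat\setminus\{F\}$, so every variable of $\Lhs{\s}$ is forced and, since $F\in\Prime$ and $F\not\in V(\s)$, we get $\phi(\s)\nforcing F$. For the irregular axiom $\ruleAXI$ the stable set is empty, so the premise of \ref{app:lemma:soundFRJ:2} is vacuous; given any p-sequent $\s_p$ with $\s\mapsto\s_p$, Lemma~\ref{lemma:lhs}\ref{lemma:lhs:3} yields $\Lhs{\s_p}\subseteq\Clo{\Lhs{\s}}$, whence by \ref{propClo:5} $V(\s_p)\subseteq\bGat\setminus\{F\}$ and therefore $\s_p\nforcing F$ (trivially if $F=\bot$, and because $F\not\in V(\s_p)$ if $F\in\PV$).

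The routine inductive cases come next. For \ref{app:lemma:soundFRJ:1} under regular $\land$ and regular $\ruleIMPi$ the premise is regular with the same left-hand side, so $\phi$ is unchanged and the hypothesis gives $\phi(\s)\forcing\Lhs{\s}$ immediately; the failure $\phi(\s)\nforcing\Rhs{\s}$ then follows from $\phi(\s)\nforcing\Rhs{\s_1}$, using in the $\ruleIMPi$ case that $A\in\Clo{\Lhs{\s}}$ and \ref{propClo:1} force $A$ at $\phi(\s)$. For \ref{app:lemma:soundFRJ:2} under irregular $\land$, $\lor$ and $\ruleIMPi$ I apply the hypothesis \ref{app:lemma:soundFRJ:2} to each premise at the same $\s_p$ (legitimate since a premise $\mapstoz\s\mapsto\s_p$): the condition $\s_p\forcing\Sigma\cap\Sfm{C}$ restricts to the weaker condition required for the premise, because $\Sigma$ shrinks and $\Sfm{\cdot}$ only grows when passing to the larger connective; the $\ruleIMPi$ case additionally uses \ref{propClo:2}, together with $\Sf{A}\subseteq\Sfm{A\imp B}$, to derive $\s_p\forcing A$ from the hypothesis, so that $A\imp B$ already fails at $\s_p$. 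Finally $\ruleIMPni$ is where \ref{app:lemma:soundFRJ:2} is obtained from \ref{app:lemma:soundFRJ:1} for the regular premise $\seqfrj{\G}{B}$: that hypothesis gives a p-sequent $\phi(\seqfrj{\G}{B})$ forcing $\G$ (hence $A$, as $A\in\Clo{\G}$) but not $B$, so $\phi(\seqfrj{\G}{B})\nforcing A\imp B$; since $\phi(\seqfrj{\G}{B})\mapsto\s_p$ places it above $\s_p$, monotonicity forces $\s_p\nforcing A\imp B$.

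The main obstacle is the join case of \ref{app:lemma:soundFRJ:1}, where $\s=\seqfrj{\G}{C}$ is itself a p-sequent, $\phi(\s)=\s$. Forcing of the variables of $\G$ is immediate from $V$, and $\s\nforcing C$ follows from the side conditions ($F\not\in\Sigat$ for $\ruleJOINA$; for $\ruleJOINO$ apply \ref{app:lemma:soundFRJ:2} to the premises supporting $C_1,C_2$, once all of $\G$ is forced). The delicate step is forcing the implications of $\G$, i.e.\ the members of $\Sigimp\cup\Thimp$, which I carry out by an inner induction on formula size. Fix $Y\imp Z$ in $\Sigimp\cup\Thimp$; its antecedent is supported, $Y=\Rhs{\s_m}$ for some premise $\s_m$ (by \ref{sc:J2} for $\Sigimp$, by the restriction to $\Upsilon$ defining $\Thimp$ otherwise). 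Applying \ref{app:lemma:soundFRJ:2} to $\s_m$ at $\s$ gives $\s\nforcing Y$: its hypothesis $\s\forcing\Sigma_m\cap\Sfm{Y}$ holds because those are strictly smaller members of $\G$, already forced by the inner induction. For a world $\b$ strictly above $\s$ ($\b\mapsto\s$) we have $\b\mapstos\s_i$ for some premise $\s_i$, and $Y\imp Z\in\Lhs{\s_i}$ — by \ref{sc:J1} for $\Sigimp$, and because $Y\imp Z\in\bigcap_j\Theta_j$ for $\Thimp$ — so Lemma~\ref{lemma:lhs}\ref{lemma:lhs:3} gives $Y\imp Z\in\Clo{\Lhs{\b}}$, and the hypothesis \ref{app:lemma:soundFRJ:1} at the p-sequent $\b$ with \ref{propClo:1} yields $\b\forcing Y\imp Z$. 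Thus at every world $\geq\s$ the implication either has a false antecedent (at $\s$ itself) or is already forced (above $\s$), so $\s\forcing Y\imp Z$. Once $\G$ is fully forced, the computation of $\s\nforcing C$ goes through, closing the join case and the induction.
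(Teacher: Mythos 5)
Your proposal is correct and follows essentially the same route as the paper's own proof: a main induction on the height of $\s$ in $\Dcal$ with a case analysis on the last rule, the routine cases handled via Lemma~\ref{lemma:lhs} and properties \ref{propClo:1}--\ref{propClo:5}, and the join case resolved by the same inner induction on formula size that simultaneously establishes forcing of the supported implications of $\G$ at $\s$ and non-forcing of the premises' right formulas $A_j$ (the paper's claims (P2)/(P3)). The only differences are presentational, e.g.\ you verify $\s\forcing Y\imp Z$ by splitting worlds above $\s$ into $\s$ itself and its strict successors, where the paper argues directly about an arbitrary $\s_p\geq\s$ forcing the antecedent.
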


\begin{proof}
  We prove the assertions by a main induction (IH1) on the height
  $\height{\s}$ of $\s$ in $\Dcal$.  Let
  $\Mod{\Dcal}=\stru{\PS{\Dcal},\leq,\rho,V}$ and let $\Rcal$ be the
  rule applied to get $\s$; we proceed by a case analysis on $\Rcal$.
  In discussing the cases, we use the notation in Fig.~\ref{fig:FRJ}.
  We also recall that
\[
\bGat\;=\;\Sfl{G}\cap\PV
\qquad
\bGimp=\Sfl{G}\cap \Fmimp
\qquad
\bG=\bGat\cup\bGimp
\]


  \begin{itemize}
  \item $\Rcal=\ruleAXR$.
  \end{itemize}

\noindent
We have  $\s=\seqfrj{\bGat\setminus\{C\}}{C}$, with $C\in\Prime$.
Since $\phi(\s)=\s$ and $V(\s)=\bGat\setminus\{C\}$,
\ref{app:lemma:soundFRJ:1} immediately follows.

  \begin{itemize}
  \item $\Rcal=\ruleAXI$.
  \end{itemize}

\noindent
We have
  $\s=\seqfrji{}{\bGat\setminus\{C\},\bGimp}{C}$,  with $C\in\Prime$.
  Let $\Gat=V(\s_p)$. Since $\s\mapsto\s_p$,
  by Lemma~\ref{lemma:lhs}\ref{lemma:lhs:3}
  and~\ref{propClo:5} we get $\Gat\subseteq \bGat\setminus\{C\}$.
 This implies  $C\not\in\Gat$, hence $\s_p\nforcing
  C$,  which proves~\ref{app:lemma:soundFRJ:2}.
 \begin{itemize}
  \item $\Rcal=\land$.
  \end{itemize}

\noindent
Both~\ref{app:lemma:soundFRJ:1} and~\ref{app:lemma:soundFRJ:2}
easily follow by~(IH1). 

 \begin{itemize}
  \item $\Rcal=\lor$.
  \end{itemize}

\noindent
We have
  \[
  \begin{array}{c}
    \AXC{$ \s_{1}\;=\;
      \seqfrji{\Sigma_{1}}{\Theta_{1}}{C_{1}}$}    
    \AXC{$\s_{2}\;=\;
      \seqfrji{\Sigma_{2}}{\Theta_{2}}{C_{2}}$}        
    \RightLabel{$\lor$}
    \BIC{$ \s\;=\;
      \seqfrji{\underbrace{\Sigma_{1},\Sigma_{2}}_{\Sigma}\;}{\;\underbrace{\Theta_{1}\cap\Theta_{2}}_{\Theta}}{C_{1}\lor C_{2}}$} 
    \DP
    \qquad
    \begin{minipage}{10em}
      $\Sigma_1\;\subseteq\;\Sigma_2\cup\Theta_2$\\[.5ex]
      $\Sigma_2\;\subseteq\;\Sigma_1\cup\Theta_1$
    \end{minipage}
  \end{array}
  \]
  By hypothesis, $\s\mapsto \s_p$ and   
$\s_p\forcing \Sigma\cap\Sfm{C_1\lor C_2}$.
Let $k\in\{1,2\}$.  We have both $\s_k\mapsto \s_p$ (indeed,
$\s_k\mapstoz \s$ and $\s\mapsto \s_p$) and
$\s_{p}\forcing \Sigma_k\cap\Sfm{C_k}$ (indeed,
$\Sigma_k\subseteq \Sigma$ and
$\Sfm{C_k}\subseteq \Sfm{C_1\lor C_2}$).  By~(IH1) applied to $\s_k$,
we get $\s_p\nforcing C_k$.  Thus, both $\s_p\nforcing C_1$ and
$\s_p\nforcing C_2$, hence $\s_p\nforcing C_1\lor C_2$ which
proves~\ref{app:lemma:soundFRJ:2}.


\begin{itemize}
\item  $\Rcal =\,\ruleIMPi$.
\end{itemize}

\noindent
If $\s$ is regular, we have
\[
\AXC{$\s_1\;=\; \seqfrj{\G}{B}$}
\RightLabel{$\ruleIMPi$}
\UIC{$\s\;=\;\seqfrj{\Gamma}{A\imp B}$}
\DP
    \qquad 
 \begin{minipage}{8em}
$A\,\in\, \Clo{\G}$    
\end{minipage}
\]
By~(IH1) applied to $\s_1$, it holds that $\phi(\s_1)\forcing \G$ and
$\phi(\s_1)\nforcing B$.  Since $A\in\ \Clo{\G}$, by~\ref{propClo:1}
we get $\phi(\s_1)\forcing A$.  Since $\phi(\s)=\phi(\s_1)$, we get
$\phi(\s)\nforcing A\imp B$, which implies~\ref{app:lemma:soundFRJ:1}.

Let $\s$ be irregular
and let us assume:
\[
  \begin{array}{c}
\AXC{$\seqfrji{\s_1\;=\;\Sigma_1}{\Theta,\Lambda}{B}$}
\RightLabel{$\ruleIMPi$}
    \UIC{$\s\;=\;\seqfrji{\underbrace{\Sigma_1,\Lambda}_{\Sigma}\;}{\Theta}{A\imp B}$}
\DP
    \qquad 
    \begin{minipage}{14em}
      $A\in\Clo{\Sigma}$
\\[.5ex]    
 $\Sigma_A\;=\;\Sigma\,\cap\, \Sf{A}$
  \end{minipage}
\end{array}
\]
By hypothesis 
$\s\mapsto \s_p$ and
$\s_p\forcing \Sigma\cap\Sfm{A\imp B}$.
Thus,
$\s_p\forcing \Sigma_1\cap\Sfm{B}$
and, since  $\Sf{A}\subseteq\Sfm{A \imp B}$,
we get $\s_p\forcing \Sigma_A$.
By hypothesis $\s\mapsto \s_p$, hence
$\s_1\mapsto \s_p$;  we can apply~(IH1) 
to $\s_1$ and infer  $\s_p\nforcing B$.
Since $A\in\Clo{\Sigma}$, 
by~\ref{propClo:2}  we get   $A\in\Clo{\Sigma_A}$.
By the fact that $\s_p\forcing \Sigma_A$ and~\ref{propClo:1},  
we get $\s_p\forcing A$.
We conclude  $\s_p\nforcing A\imp B$, which proves~\ref{app:lemma:soundFRJ:2}.


\begin{itemize}
\item  $\Rcal =\,\ruleIMPni$.
\end{itemize}

\noindent
We have:
\[
\AXC{$\s_1\;=\; \seqfrj{\G}{B}$}
\RightLabel{$\ruleIMPni$}
\UIC{$\s\;=\;\seqfrji{}{\Theta}{A\imp B}$}
\DP
    \qquad 
 \begin{minipage}{8em}
$A\,\in\, \Clo{\G}$    
\end{minipage}
\]
By (IH1) applied to $\s_1$, we have
$\phi(\s_1)\forcing \G$ and $\phi(\s_1)\nforcing B$.
By~\ref{propClo:1}   $\phi(\s_1)\forcing A$.
By hypothesis $\s\mapsto \s_p$, which implies
$\s_1\mapsto \s_p$. Thus
$\s_p \leq \phi(\s_1)$, hence
$\s_p\nforcing A \imp B$,
and this proves~\ref{app:lemma:soundFRJ:2}.


\begin{itemize}
\item  $\Rcal =\,\ruleJOINA$.
\end{itemize}

\noindent
We have:
  \[
  \begin{array}{c}
    \AXC{\dots}
    \AXC{$\s_j\;=\;\seqfrji{\Sigat_j,\Sigimp_j}{\That_j,\Thimp_j}{A_j}$}
    \AXC{$\dots$}
    \insertBetweenHyps{\hskip -4pt}
    \RightLabel{$\ruleJOINA$} 
    \TIC{$\s\;=\;\seqfrj{\Sigat,\,\That\setminus\{C\},\,\Sigimp,\,\Thimp}{C}$}
    \DP
    \qquad 
    \begin{minipage}{10em}
      $j=1\dots n$
      \\[1ex]
      $C\in\Prime\setminus\Sigat$
    \end{minipage}
    \\[4ex]
    \Gat\,=\, \Sigat\,\cup\,(\That\setminus\{C\})
    \qquad
    \Gimp\,=\, \Sigimp\cup\Thimp
    \qquad
    \G\,=\,\Gat\cup\Gimp
  \end{array}
  \]
  Note that $\s\in\PS{\Dcal}$, $\phi(\s)=\s$ 
  and $V(\s)=\Gat$.  Since $C\not\in\Gat$,  we
  get:
  \begin{enumerate}[label=(P\arabic*),ref=(P\arabic*)]
  \item\label{lemma:soundFRJ:glue0} $\s\forcing \Gat$ and $\s\nforcing C$.
  \end{enumerate}
  To complete the proof of~\ref{app:lemma:soundFRJ:1}, it remains to show
  that $\s\forcing \Gimp$.  To this aim,
we show that, for every formula $H$, the following properties hold:
  
  \begin{enumerate}[start=2,label=(P\arabic*),ref=(P\arabic*)]
  \item\label{lemma:soundFRJ:glue1}
    $H\in \Gimp$ implies
    $\s\forcing H$.
    
  \item\label{lemma:soundFRJ:glue2}
$H=A_j$, with $1\leq j\leq n$, implies    $\s\nforcing H$.
  \end{enumerate}
  
  \noindent
To prove~\ref{lemma:soundFRJ:glue1} and~\ref{lemma:soundFRJ:glue2},
we introduce a  secondary induction
  hypothesis (IH2) on $\size{H}$.
  Let $H\in \Gimp$. Then, there is $k\in\{1,\dots,n\}$ such that
  $H=A_k\imp B$.  Let $\s_p\in \PS{\Dcal}$  such that $\s\leq\s_p$
  and $\s_p\forcing A_k$; we show that $\s_p \forcing B$. Let
  $\G_p=\Lhs{\s_p}$; since 
$H\in\Lhs{\s}$ and  
$\s_p\mapstos \s$ (indeed, $\s\leq\s_p$), by
  Lemma~\ref{lemma:lhs}\ref{lemma:lhs:3} we get $H\in\Clo{\G_p}$.  Since
 $\size{A_k} < \size{H}$, we can apply~(IH2)
  on~\ref{lemma:soundFRJ:glue2} and claim that $\s\nforcing A_k$,
  hence $\s_p\neq \s$, which implies $\height{\s_p}<\height{\s}$.  By (IH1) applied
  to $\s_p$, we have $\s_p\forcing \G_p$ and, by~\ref{propClo:1},
  $\s_p\forcing H$, namely $\s_p \forcing A_k\imp B$.  Since
  $ \s_p \forcing A_k$, we get $ \s_p \forcing B$; this concludes the proof 
  of~\ref{lemma:soundFRJ:glue1}.  

Let  $H=A_j$, where
$j\in\{1,\dots,n\}$.  Note
  that $\height{\s_j} < \height{\s}$, $\s_j\mapstoz \s$ and
  $\s\in\PS{\Dcal}$.  We prove $\s \nforcing A_j$   by
  applying~(IH1) on $\s_j$ (Point~\ref{app:lemma:soundFRJ:2}).
To this aim, we have to check that the condition

  \begin{enumerate}
  \item[($\dag$)]
  $\s \;\forcing\; (\Sigat_j\,\cup\,\Sigimp_j)\:\cap\:\Sfm{A_j}$
  \end{enumerate}

\noindent
holds. 
Let $K\in(\Sigat_j\cup\Sigimp_j)\cap\Sfm{A_j}$.
If $K\in\Sigat_j$, since
  $\Sigat_j\subseteq V(\s)$  we immediately get $\s\forcing K$.  
If  $K\in \Sigimp_j$, then 
  $K\in\Gimp$ and    $\size{K} < \size{A_j}$
(indeed,  by definition of $\Sfm{A_j}$, $K$ is a proper subformula of $A_j$).
We can apply~(IH2) on~\ref{lemma:soundFRJ:glue1} and we
  get $\s \forcing K$, hence~($\dag$) holds. 
This  concludes the
  proof of~\ref{lemma:soundFRJ:glue2}.  
By~\ref{lemma:soundFRJ:glue0}
  and~\ref{lemma:soundFRJ:glue1}, Point~\ref{app:lemma:soundFRJ:1} follows.


\begin{itemize}
\item  $\Rcal =\,\ruleJOINO$.
\end{itemize}

\noindent
Similar to the case~$\ruleJOINA$.
\end{proof}

By Lemma~\ref{app:lemma:soundFRJ}, we get:


\begin{lemma}\label{lemma:soundnessProperties}
Soundness properties~\ref{prop:soundr} and~\ref{prop:soundi} hold.  
\end{lemma}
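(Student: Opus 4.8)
The plan is to derive the two soundness properties from Lemma~\ref{app:lemma:soundFRJ} by instantiating it on a concrete derivation and then reading off either the root (for~\ref{prop:soundr}) or a suitable p-sequent below $\s$ (for~\ref{prop:soundi}). The only preliminary remark I would make is that the extraction $\Mod{\cdot}$, the associated map $\phi$, and Lemma~\ref{app:lemma:soundFRJ} itself only use that the root sequent of the derivation is \emph{regular}; the fact that its right formula is the goal $G$ plays no role in their construction or proof. Hence all of them apply verbatim to any $\FRJof{G}$-derivation whose root is a regular sequent, and this is what I exploit below.

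For~\ref{prop:soundr}: suppose $\seqfrj{\G}{C}$ is provable and let $\Dcal$ be an $\FRJof{G}$-derivation having $\s=\seqfrj{\G}{C}$ as its (regular) root. I form $\Mod{\Dcal}$ and its map $\phi$; its root world $\rho$ is $\phi(\s)$, since every sequent of $\Dcal$ satisfies $\s'\mapstos\s$ and $\phi$ is order-reversing, so $\phi(\s)$ is the $\leq$-least element of $\PS{\Dcal}$. Applying Lemma~\ref{app:lemma:soundFRJ}\ref{app:lemma:soundFRJ:1} to the root $\s$ gives $\phi(\s)\forcing\G$ and $\phi(\s)\nforcing C$, so taking $\K=\Mod{\Dcal}$ and $\a=\rho$ yields~\ref{prop:soundr} at once.

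For~\ref{prop:soundi}: suppose $\s=\seqfrji{\Sigma}{\Theta}{C}$ is provable and can be used to prove a regular sequent; I read this as: there is an $\FRJof{G}$-derivation $\Dcal$ with regular root in which $\s$ occurs. Descending from $\s$ along its branch, the first regular sequent encountered must be the conclusion of a join rule (the unique way to pass from irregular to regular), hence a p-sequent $\s_p$ with $\s\mapsto\s_p$. I would then establish $\Sigma\subseteq\Lhs{\s_p}$: the irregular sequents between $\s$ and the join form a $\mapstois$-chain along which $\Sigma$ only grows by~\ref{lemma:lhs:4}, and both $\ruleJOINA$ and $\ruleJOINO$ keep all of $\Sigat\cup\Sigimp$ (in particular the $\Sigma$ of the last premise) in the left-hand side of $\s_p$. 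Since $\s_p$ is a p-sequent, $\phi(\s_p)=\s_p$, so Lemma~\ref{app:lemma:soundFRJ}\ref{app:lemma:soundFRJ:1} gives $\s_p\forcing\Lhs{\s_p}$, whence $\s_p\forcing\Sigma$. In particular $\s_p\forcing\Sigma\cap\Sfm{C}$, so Lemma~\ref{app:lemma:soundFRJ}\ref{app:lemma:soundFRJ:2} (using $\s\mapsto\s_p$) gives $\s_p\nforcing C$. Setting $\a=\s_p$ and $\G=\{\,H\in\Sigma\cup\Theta \mid \s_p\forcing H\,\}$, I obtain $\Sigma\subseteq\G\subseteq\Sigma\cup\Theta$, $\a\forcing\G$ and $\a\nforcing C$, which is~\ref{prop:soundi}.

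The two appeals to Lemma~\ref{app:lemma:soundFRJ} are routine once the data are in place. The hard part will be the bookkeeping in~\ref{prop:soundi}: I must pin down the precise content of ``$\s$ can be used to prove a regular sequent'' (occurrence in a derivation with regular root, equivalently $\s\mapsto\s_p$ for a join-conclusion $\s_p$) and then verify the inclusion $\Sigma\subseteq\Lhs{\s_p}$, which is exactly the point where the stability discipline recorded by~\ref{lemma:lhs:4} and by the join rules is needed; the choice of $\G$ as the forced part of $\Sigma\cup\Theta$ then packages everything together.

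\qed
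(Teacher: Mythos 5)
Your proof is correct and follows essentially the same route as the paper's: both parts are obtained by instantiating Lemma~\ref{app:lemma:soundFRJ}, reading~\ref{prop:soundr} off the root via $\phi$, and for~\ref{prop:soundi} locating the first join-conclusion $\s_p$ below $\s$, propagating $\Sigma$ into $\Lhs{\s_p}$ via~\ref{lemma:lhs:4} and the structure of the join rules, and then applying both parts of the key lemma. The only (harmless) deviations are that you take $\G$ to be the forced part of $\Sigma\cup\Theta$ where the paper simply takes $\G=\Lhs{\s_p}$, and that you make explicit the tacit generalization of Lemma~\ref{app:lemma:soundFRJ} to derivations with an arbitrary regular root.
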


\begin{proof}
  Let  $\s=\seqfrj{\G}{C}$ be provable in $\FRJof{G}$ and
let $\Dcal$ be an $\FRJof{G}$-derivation of $\s$.  By
Lemma~\ref{app:lemma:soundFRJ}\ref{app:lemma:soundFRJ:1},
the world $\phi(\s)$ of the model $\Mod{\Dcal}$ satisfies
$\phi(\s)\forcing \G$ and $\phi(\s)\nforcing C$.
Setting $\K=\Mod{\Dcal}$ and 
$\a=\phi(\s)$,  \ref{prop:soundr} holds.

We show~\ref{prop:soundi}. 
Let  $\s=\seqfrji{\Sigma}{\Theta}{C}$ be  provable in $\FRJof{G}$
and let us assume that  $\s$ can be used to prove
a regular sequent $\s_r$ in $\FRJof{G}$.
Then, there exist an irregular sequent $\s'$ and a p-sequent $\s_p$ such that:
\[
\s\,=\,\seqfrji{\Sigma}{\Theta}{C}
\quad\mapstois\quad
\s'\,=\,\seqfrji{\Sigma'}{\Theta'}{C'}
\quad\mapstorz{\ruleJOIN}\quad
\s_p\,=\,\seqfrj{\Gamma_p}{C_p}
\quad\mapstos\quad
\s_r
\]
where $\ruleJOIN$ denotes  one between the rules $\ruleJOINA$ and  $\ruleJOINO$.
By Point~\ref{lemma:lhs:4} we have
$\Sigma\subseteq\Sigma'$ and $\Sigma'\cup\Theta'\subseteq \Sigma\cup\Theta$.
By definition of  $\ruleJOIN$, we also have
$\Sigma'\subseteq\Gamma_p\subseteq\Sigma'\cup\Theta'$;
hence $\Sigma\subseteq \Gamma_p\subseteq\Sigma\cup\Theta$.
Let $\Dcal$ be the $\FRJof{G}$-derivation having root sequent $\s_p$.
By Lemma~\ref{app:lemma:soundFRJ}\ref{app:lemma:soundFRJ:1} 
$\s_p\forcing \G_p$; since    $\Sigma\cap\Sfm{C}\subseteq \Gamma_p$,
we  get   $\s_p\forcing \Sigma\cap\Sfm{C}$.
Since  $\s\mapsto \s_p$,
we can apply  Lemma~\ref{app:lemma:soundFRJ}\ref{app:lemma:soundFRJ:2} 
to infer  $\s_p\nforcing C$.
Setting $\K=\Mod{\Dcal}$, 
$\a=\s_p$ and  $\G=\G_p$, 
\ref{prop:soundi} holds.
\end{proof}




\end{document}